\documentclass[12pt]{amsart}

%Some Temporary commands in order to highlight changes
%\usepackage[notref,notcite]{showkeys}%shows labels when editing

\usepackage{amsmath,amstext,amssymb,amsopn,amsthm}
\usepackage{url,verbatim}
\usepackage{mathtools}
\usepackage{enumerate}

\usepackage{color,graphicx,subfigure}

\usepackage[margin=30mm]{geometry}
\usepackage{eucal,mathrsfs,dsfont}%gives nicer set-names. 

\allowdisplaybreaks

\newtheorem{theorem}{Theorem}[section]
\newtheorem{corollary}[theorem]{Corollary}
\newtheorem{lemma}[theorem]{Lemma}
\newtheorem{proposition}[theorem]{Proposition}

\theoremstyle{definition}

\newtheorem{definition}[theorem]{Definition}

\newtheorem{remark}[theorem]{Remark}

\numberwithin{equation}{section}

\newcommand{\eps}{\varepsilon}

\newcommand{\calF}{\mathcal{F}}

\newcommand{\calG}{\mathcal{G}}
\newcommand{\calA}{\mathcal{A}}

\newcommand{\calT}{\mathcal{T}}
\newcommand{\calJ}{\mathcal{J}}

\newcommand{\calC}{\mathcal{C}}

\newcommand{\calM}{\mathcal{M}}

\newcommand{\calR}{\mathcal{R}}

\newcommand{\E}{\operatorname{\mathds{E}}} % Expectation
\renewcommand{\P}{\operatorname{\mathds{P}}} % Probability
\newcommand{\R}{\mathds{R}}

\newcommand{\Z}{{\mathbb Z}}

\newcommand{\ba}{{\mathbf a}}
\newcommand{\bb}{{\mathbf b}}
\newcommand{\bc}{{\mathbf c}}
\newcommand{\bp}{{\mathbf p}}
\newcommand{\bq}{{\mathbf q}}

\newcommand{\bzero}{{\bf 0}}
\newcommand{\prt}{\partial}

\newcommand{\wh}{\widehat}
\newcommand{\wt}{\widetilde}

\DeclareMathOperator{\dist}{dist}

\DeclareMathOperator{\Var}{Var}
\DeclareMathOperator{\Cov}{Cov}

\def\bone{{\bf 1}}

\def\bt{{\bf t}}

\def\ba{{\bf a}}

\def\lra{\leftrightarrow}

\def\bt{1}

\newcommand{\bfa}{{\bf a}}
\newcommand{\bfb}{{\bf b}}
\newcommand{\bfh}{{\bf h}}
\newcommand{\bfg}{{\bf g}}

\title[Meteor process on ${\mathbb Z}^d$]{Meteor process on ${\mathbb Z}^d$}

\author{
Krzysztof Burdzy} 
\email{burdzy@math.washington.edu}
\thanks{Research partially supported by NSF grant DMS-1206276} 
\address{Department of Mathematics, Box 354350, University of Washington, Seattle, WA 98195, USA}

\pagestyle{headings}

\begin{document}

\begin{abstract}

The meteor process is a model for mass redistribution on a graph. The case of finite graphs was analyzed in \cite{BBPS}. 
This paper is devoted to the meteor process on ${\mathbb Z}^d$. The process is constructed and a stationary distribution is found. 
Convergence to this stationary distribution is proved for a large family of initial distributions. The first two moments of the mass distribution at a vertex are computed for the stationary distribution. For the one-dimensional lattice ${\mathbb Z}$, the net flow of mass between adjacent vertices is shown to have bounded variance as time goes to infinity. An alternative representation of the process on ${\mathbb Z}$ as a collection of non-crossing paths is presented. The distributions of a ``tracer particle'' in this system of non-crossing paths are shown to be tight as time goes to infinity.

\end{abstract}

\maketitle

\section{Introduction}\label{intro}

We study a model of mass redistribution on a graph. 
A vertex $x$ of the graph holds mass $M^x_t\geq 0$ at time $t$. When a ``meteor hits''
$x$ at time $t$, the mass $M^x_t$ of the soil present at $x$ is distributed equally among all neighbors of $x$ (added to their masses). There is no soil (mass) left at $x$ just after a meteor hit. Meteor hits are modeled as independent Poisson processes, one for each vertex of the graph.
This model was studied in \cite{BBPS} in the case of finite graphs. The existence and uniqueness of the stationary distribution were proved for all connected simple graphs. The rate of convergence to the stationary distribution was estimated for some graphs. Various properties of the stationary distribution were proved. 

This paper is mostly devoted to the meteor process on $\Z^d$. 
The existence of the process is proved for arbitrary (infinite) graphs with a bounded degree in Section \ref{sec:pre}.
In Section \ref{st_ex}, a stationary distribution is found for the process on $\Z^d$, for every $d\geq 1$. In the same section, the first two moments of the mass distribution at a vertex in the stationary regime are determined.
Convergence to this stationary distribution is proved for a large family of initial distributions in Section \ref{st_conv}. In Section \ref{flow}, for the one-dimensional lattice $\Z$, the net flow of mass between adjacent vertices is shown to have bounded variance as time goes to infinity. The same section contains an alternative representation of the process on $\Z$ as a collection of non-crossing paths. The distributions of a ``tracer particle'' in this system of non-crossing paths are shown to be tight as time goes to infinity.

Section \ref{sec:supp} is the only part of the paper devoted to finite graphs. It is shown that, for finite graphs, the support of the stationary distribution is equal to the ``largest possible'' candidate for this set. 

We presented a review of related models and articles in \cite{BBPS}. The following is a shortened version of that discussion with some new references.

A model of mass redistribution similar to ours appeared in \cite{HW} but that paper went in a completely different direction. It was mostly focused on the limit model when the graph approximates the real line. 
There is a considerable literature on a mass redistribution model called ``chip-firing''; we mention here only \cite{CP, Heu}.
Mass redistribution is a part of every sandpile model, including a ``continuous'' version studied in \cite{FMQR}. See also ``divisible sandpile'' in \cite{LP}. Sandpile models have considerably different structures and associated questions from ours.
In a different direction, the reader may want to consult a paper \cite{Over} on ``overhang''. 
The introduction to \cite{BBPS} explains how our model on a \emph{finite} graph can be represented as a product of random matrices. 
This is similar to the product of random matrices that appeared
in \cite{LJL}.
So far, no technically useful connection between our model and random matrices has been found but such a connection seems to be an intriguing possibility.
 A more recent line of investigation related to our work is on Markov chains on the space of partitions---see  \cite{CL,C14}. 

An important technical tool in \cite{BBPS} and this paper is a pair of ``weakly interacting'' continuous time symmetric random walks on the graph.
They are called WIMPs for ``weakly interacting  mathematical particles.''
If the two random walks are at different vertices, they move independently. However, if they are at the same vertex, their next jumps occur at the same time, after an exponential waiting time, common to both processes. The dependence ends here---the two processes jump to vertices chosen independently, even though they jump at the same time. One can think about each of the random walks as a grain of sand. The mass present at every vertex can be thought of as a large number of very small grains of sand. WIMPs played an important role in \cite{FF}. A similar process (``associate Markov chain'') appeared in Section 2.1 of \cite{AldLan}.

\section{Construction and basic properties}\label{sec:pre}

This section contains definitions and results from \cite{BBPS}. Only Proposition \ref{de22.1} is new.

The following setup and notation will be used in most of the
paper. All constants will be assumed to be strictly positive, finite,
real numbers, unless stated otherwise.  The notation $|S|$ will be
used for the cardinality of a finite set, $S$. We will write $\bzero =(0,0, \dots, 0)$.

We will consider only connected graphs with no loops and no
multiple edges.  We will often denote the chosen graph by $G$ and its
vertex set by $V$.  In particular, we often use $k$ for $|V|$.  We let
$d_v$ stand for the degree of a vertex $v$, and write $v\lra x$ if
vertices $v$ and $x$ are connected by an edge.

We will write $\calC_k$ to denote the circular graph with $k$
vertices, $k\geq 2$. In other words, the vertex set of $\calC_k$ is
$\{1,2,\dots, k\}$ and the only pairs of vertices joined by edges are
of the form $(j, j+1)$ for $j=1,2, \dots, k-1$, and $(k,1)$. For
$\calC_k$, all arguments will apply ``mod $k$''.  For example, we will
refer to $k$ as a vertex ``to the left of 1,'' and interpret $j-1$ as
$k$ in the case when $j=1$.

Every vertex $v$ is associated with a Poisson process $N^v$ representing ``arrival times of meteors''
with intesity 1.
We assume that all processes $N^v$ are jointly independent. A vertex $v$ holds some ``soil'' with mass equal to $M^v_t\geq 0$ at time $t\geq0$. The processes $M^v$ evolve according to  the following scheme.

We assume that $M^v_0 \in [0, \infty)$ for every $v$, a.s.
At the time $t$ of a jump of $N^v$, $M^v$ jumps to 0. At the same time, the mass $M^v_{t-}$ is ``distributed'' equally among all adjacent sites, that is, for every vertex $x\lra v$,
the process $M^x$ increases by $M^v_{t-}/d_v$, that is,
$M^x_t = M^x_{t-} + M^v_{t-}/d_v$. 
The mass $M^v$ will change only when $N^v$ jumps 
and just prior to that time there is positive mass at $v$,
or $N^x$ jumps, for
some $x\lra v$ and just prior to that time there is positive mass
at $x$.  
We will denote the meteor process $\calM_t = \{M^{v}_t, v\in V\}$.  

The informal definition of the meteor process $\calM_t$ is clearly rigorous if meteor hits, i.e., jump times of processes $N^v$, do not have accumulation times. Hence, the definition does not require any more attention in the case when $V$ is finite. The case of infinite graph requires a more formal argument, presented in the following proposition.

\begin{proposition}\label{de22.1}
Suppose that $G$ is a (not necessarily finite) graph and assume that $d_G:=\sup _{v\in V} d_v < \infty$. Assume that $M^v_0 \in [0, \infty)$ for every $v$, a.s. Then there exists a unique process $\{\calM_t, t\geq 0\}$ evolving in the manner described above.
\end{proposition}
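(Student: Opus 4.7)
The plan is to construct $\calM$ as a limit of meteor processes $\calM^{(n)}$ on finite subgraphs, using an almost surely finite ``look-back horizon'' to localize the dynamics at any given vertex in any given bounded time interval. Fix $v_0\in V$ and $T>0$, and let $B_n=\{v\in V:\dist(v,v_0)\le n\}$; since $d_G<\infty$, each $B_n$ is finite. Define $\calM^{(n)}$ to be the meteor process on the subgraph of $G$ induced by $B_n$, driven by $\{N^v|_{[0,T]}\}_{v\in B_n}$ and initial masses $\{M^v_0\}_{v\in B_n}$, where a meteor hitting $w\in B_n$ redistributes mass by the original rule (each of the $d_w$ neighbors of $w$ in $G$ receives $1/d_w$), the fraction directed at any out-of-$B_n$ neighbor being discarded. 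Since $B_n$ is finite, $\calM^{(n)}$ is well defined as a c\`adl\`ag measurable function of its inputs.

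The crux is a Borel--Cantelli bound on \emph{propagation chains}. Call a propagation chain of length $n$ ending at $v_0$ by time $T$ a tuple $(w_0,\ldots,w_n=v_0;\,0<s_1<\cdots<s_n\le T)$ with $w_{i-1}\lra w_i$ in $G$ and $s_i\in N^{w_{i-1}}$. There are at most $d_G^n$ walks of length $n$ ending at $v_0$, and for any fixed such walk the expected number of admissible ordered hit-time configurations equals $T^n/n!$ (direct integration of the joint intensity on the simplex via Campbell's formula, valid whether or not vertices repeat along the walk). Hence
\[
\P(\text{some propagation chain of length } n \text{ ending at } v_0 \text{ by time } T \text{ exists})\le (d_G T)^n/n!,
\]
summable in $n$, so Borel--Cantelli yields an almost surely finite $N=N(v_0,T)$ beyond which no propagation chain exists in the realization. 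By the triangle inequality every such chain of length $n$ stays in $B_n$, since the $j$-th vertex satisfies $\dist(w_j,v_0)\le n-j$.

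By linearity of the finite-graph dynamics in the initial masses, $M^{(n),v_0}_t$ admits a ``chain expansion''
\[
M^{(n),v_0}_t=\sum_{w\in B_n}M^w_0\sum_{\gamma}\prod_i d_{w_{i-1}}^{-1},
\]
where $\gamma$ ranges over admissible propagation chains from $w$ to $v_0$ in $B_n$ within $[0,t]$ (``admissible'' meaning each $s_i$ is the first jump of $N^{w_{i-1}}$ after $s_{i-1}$, encoding that the ``tagged fraction'' stays at $w_{i-1}$ until time $s_i$). For $n\ge N$, every admissible chain has length $\le N$ and lies in $B_N\subset B_n$, so the inner sum is identical to the one computed in $\calM^{(N)}$ (both vanish when $w\notin B_N$); thus $M^{(n),v_0}_t=M^{(N),v_0}_t$ for all $n\ge N$ and $t\in[0,T]$. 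Setting $M^{v_0}_t:=\lim_n M^{(n),v_0}_t$ and repeating for every vertex and every $T$ produces the process $\calM_t$ for $t\ge 0$, and the prescribed jump rule at each hit of each $N^v$ is immediate since only finitely many vertices are active at such a jump. Uniqueness follows by the same argument: any other solution must agree with $\calM^{(N)}$ on $B_N\times[0,T]$, hence with $\calM$. The main technical obstacle is formalizing the chain expansion, in particular verifying that any discrepancy $M^{(n),v_0}_t-M^{(n'),v_0}_t$ with $n'>n$ is carried by chains meeting $V\setminus B_n$; this can be handled by induction on the ordered meteor hits within $B_n\cap[0,T]$, each hit extending admissible chains at the hit vertex by exactly one step, and the Borel--Cantelli bound then rules such long chains out for $n\ge N$.
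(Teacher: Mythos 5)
Your proposal is correct in outline and rests on the same core idea as the paper's proof---a graphical construction in which the mass at $(v_0,t)$ is expressed as a sum over an almost surely finite collection of ``influence paths'' weighted by $\prod_i 1/d_{w_{i-1}}$---but it executes the key finiteness step differently. The paper explores backward in time from $(x,T)$, couples the successive waiting times with independent exponentials of mean $1/((1+j)d_G)$, and uses Kolmogorov's three-series theorem to show the exploration reaches time $0$ after finitely many steps, after which it \emph{defines} $M^x_T$ directly by the path sum. You instead bound the expected number of propagation chains of length $n$ ending at $v_0$ by $(d_G T)^n/n!$ via the multivariate Mecke/Campbell formula and apply Borel--Cantelli; this is more elementary and quantitative (it gives a tail bound on the maximal chain length), at the cost of an extra finite-volume layer $\calM^{(n)}$ with discarded boundary mass and a stabilization argument at $v_0$ for $n\geq N$, a step the paper avoids by working with the path sum directly. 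Two points need tightening, though neither affects viability: (1) your ``admissible'' chains must also require that $N^{w_n}=N^{v_0}$ has no jump in $(s_n,t]$ (and, for the length-zero chain, that $N^w$ has no jump in $(0,t]$); without this terminal condition the chain expansion overcounts mass that has already left $v_0$. The paper's ``acceptable path,'' being defined on all of $[0,T]$ with jumps forced at every hit of the current vertex, encodes this automatically. (2) Uniqueness should be claimed as agreement with $\calM^{(N)}$ \emph{at the vertex $v_0$} on $[0,T]$, not on all of $B_N\times[0,T]$: vertices near the boundary of $B_N$ receive mass from outside $B_N$ that $\calM^{(N)}$ discards, so the stronger statement is false; the weaker one is exactly what your chain-expansion induction yields and is all that is needed.
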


\begin{proof}
The proof is an implementation of 
the graphical construction method first proposed in \cite{Har}.
Heuristically speaking, this method works because
on short enough time intervals, we have domination by subcritical
percolation.

It will be convenient to use independent Poisson processes $N^v_t$ defined for all $t \in \R$, not only for $t\geq 0$. For a set $A\subset V$, let $U(A) = \{v \in V: \exists y \in A \text{  such that  } v \lra y\}$.

Consider any $x\in V$ and $T>0$.
Let $\Delta N^v_t = N^v_t - N^v_{t-}$.
Let $A_0=\{x\}$, $t_0 = T$ and for $j\geq 1$, let 
\begin{align*}
t_j &=\sup\{t \leq t_{j-1}: \Delta N^y_{t} \ne 0 \text{ for some  } y \in U(A_{j-1}) \},\\
y_j &= y \text {  such that  } \Delta N^y_{t_j} \ne 0,\\
A_j &= A_{j-1}
\cup \{  y_j\},\\
S_j &= t_{j-1} - t_j.
\end{align*}
We have $|A_j| \leq 1 + j $ and $|U(A_j)| \leq (1+j) d_G$.
Given $A_{j-1}$, the distribution of $S_j$ is exponential with the mean $1 / |U(A_{j-1})|$. Let $S^*_j$ be independent exponential random variables with $\E S^*_j = 1/((1+j) d_G)$. One can couple (construct on the same probability space) $S_j$'s and $S^*_j$'s so that $S^j \geq S^*_j$ for all $j\geq 1$, a.s. A straightforward application of Kolmogorov's three series theorem shows that $\sum_{j\geq 1} S^*_j = \infty$, a.s. Hence $\sum_{j\geq 1} S_j = \infty$, a.s. Let $I$ be the largest $i$ such that $\sum_{j= 1}^i S_j < T$ and note that $I < \infty$, a.s.

Recall that $x\in V$ is fixed.
We will say that a function $\{\Lambda_t, t\in [0,T]\}$ with values in $V$ is an \emph{acceptable path} if $\Lambda_T =x$,  $\Lambda$ jumps at a time $t$ if and only if $\Lambda_{t-} = v$ and $N^v$ has a jump at time $t$, and the jump takes $\Lambda$ to one of the neighbors of $v$, i.e., if $t$ is a jump time then $\Lambda_t \lra \Lambda_{t-}$.

It is easy to see that if $\Lambda$ is an acceptable path then
$\Lambda_t = x \in A_0$ for $t\in [t_1, t_0]$. 
By induction, $\Lambda_t \in A_j$ for all $t\in [t_{j+1}, t_{j}) \cap [0,T]$. Hence, $\Lambda_0 \in A_I$. It follows that the number of acceptable paths is finite, a.s.

Suppose that $\Lambda$ is an acceptable path with exactly  $j$ jumps on the interval $[0,T]$ and
let $u_1 < u_2 < \dots < u_j$ be the jump times of $\Lambda$. We will write $d(x) $ in place of $d_x$ for typographical reasons.
Let $\wt M^\Lambda_T = M^{\Lambda_0}_0 \prod_{i=1}^j
1/d(\Lambda_{u_i-})$ and let $ M^x_T = \sum_\Lambda \wt M^\Lambda_T$, where the sum is over all acceptable paths $\Lambda$. Note that $ M^x_T$ is well defined and finite, a.s.

It is easy to check that $(x,t) \to M^x_t$ has the properties described in the definition of $M^x_t$ and that it is the unique process with these properties.
\end{proof}

We will now define WIMPs (``weakly interacting mathematical particles'').

\begin{definition}\label{wimps}

Consider a finite graph.
Suppose that a meteor process $\calM$ is given and let $a= \sum_{v\in V} M^v_0$. 
For each $j\geq 1$, let $\{Y^j_n, n\geq 0\}$  be a discrete time symmetric random walk on $G$ with the initial distribution
$\P (Y^j_0 = x) =  M^x_0/a$ for $x\in V$. 
We assume that conditional on $ \calM_0$, all processes $\{Y^j_n, n\geq 0\}$, $j\geq 1$, are independent.

Recall Poisson processes $N^v$ defined earlier in this section
and assume that they are independent of $\{Y^j_n, n\geq 0\}$, $j\geq 1$.
For every $j\geq 1$, we define a continuous time Markov process $\{Z^j_t, t\geq 0\}$
by requiring that the embedded discrete Markov chain for $Z^j$ is $Y^j$ and $Z^j$ jumps at a time $t$ if and only if 
$N^v$ jumps at time $t$, where $v=Z^j_{t-}$.
Note that the jump times of all $Z^j$'s  are defined by the same family of Poisson processes $\{N^v\}_{v\in V}$.

\end{definition}

Processes $Z^j$  are continuous time nearest neighbor symmetric random walks on $G$ with exponential holding time with mean 1.
The joint distribution of $(Z^1, Z^2)$ is the following. The state space for the process $(Z^1,  Z^2)$ is $V^2$. If $(Z^1_t,  Z^2_t)=(x,y)$ with $x\ne y$ then the process will stay in this state for an exponential amount of time with mean $1/2$ and at the end of this time interval, one of the two processes (chosen uniformly) will jump to one of the nearest neighbors (also chosen uniformly). This behavior is the same as that of two independent random walks. However, if 
$(Z^1_t, Z^2_t)=(x,x)$ then the pair of processes behave in a way that is different from that of a pair of independent random walks. Namely, after an exponential waiting time with mean 1 (not $1/2$), 
both processes will jump at the same time; each one will jump to one of the nearest neighbors of $x$ chosen uniformly and independently of the direction of the jump of the other process.

\begin{remark}\label{j12.1}
The meteor process $\{\calM_t, t\geq 0\}$ is a somewhat unusual
stochastic process in that its state space can be split into an uncountable
number of disjoint communicating classes. 
For example, consider the following two initial distributions. Suppose that $M^v_0 = 1$ for all $v$. Fix some $x\in V$, and let $\wt M^v_0 = 1/\pi$ for all $v\ne x$ and $\wt M^x_0 = |V| - (|V|-1)/\pi$. If $\{\calM_t, t\geq 0\}$ and $\{\wt\calM_t, t\geq 0\}$ 
are meteor processes with these initial distributions then 
for every $t>0$, the distributions of $\calM_t$ and $\wt \calM_t$ 
will be mutually singular.

It follows from these observations that proving convergence of $\{\calM_t, t\geq 0\}$ to the stationary distribution cannot proceed along the most classical lines.  
\end{remark}

\begin{theorem}\label{a29.7}
(\cite{BBPS})
Consider the process $\{\calM_t, t\geq 0\}$ on a finite graph $G$. Assume that $|V| =k$ and $\sum_{v\in V} M^v_0 = k$. 
 When $t\to \infty$, the distribution of 
$\calM_t$
converges to a distribution $Q$ on $[0,k]^k$. The distribution $Q$ is the unique stationary distribution for the process 
$\{\calM_t, t\geq 0\}$.
In particular, $Q$ is independent of the initial
distribution of $\calM$.
\end{theorem}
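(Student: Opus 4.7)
My plan is to combine the conservation of total mass (which makes the state space compact) with a moment duality against tuples of WIMPs. A meteor hit at $v$ removes $M^v_{t-}$ and redistributes it as $d_v$ equal pieces of size $M^v_{t-}/d_v$, whose sum is exactly $M^v_{t-}$; hence $\sum_v M^v_t \equiv k$ for all $t$, so $\calM_t$ stays in the compact simplex $\Sigma_k := \{m \in [0,k]^V : \sum_v m_v = k\}$. Tightness is automatic, the Feller property follows by coupling through the common driving Poisson processes $\{N^v\}$, and Krylov--Bogolyubov produces at least one stationary distribution $Q$ on $\Sigma_k$.

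Next I would establish a moment duality of the form
\begin{equation*}
\E\bigl[M^{v_1}_t \cdots M^{v_n}_t \,\big|\, \calM_0\bigr] \;=\; \sum_{x_1,\ldots,x_n \in V} M^{x_1}_0\cdots M^{x_n}_0 \, \P\bigl(Z^1_t=v_1,\ldots,Z^n_t=v_n \,\big|\, Z^j_0=x_j, j\leq n\bigr),
\end{equation*}
where $(Z^1,\dots,Z^n)$ are $n$ WIMPs as in Definition~\ref{wimps}. The identity can be checked by differentiating both sides in $t$ and matching the generators: the simultaneous jump of WIMPs at rate $1$ (rather than $n$) when several of them occupy a common site corresponds exactly to the fact that a single meteor hit redistributes the shared mass only once, so the ``multiplicative loss'' is coupled across particles. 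Alternatively, the acceptable-path expansion of Proposition~\ref{de22.1} yields the formula directly by collecting factors $1/d(\Lambda_{u_i-})$ along $n$-tuples of paths.

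Once the duality is in place, convergence of moments reduces to ergodicity of the finite-state Markov chain $(Z^1,\ldots,Z^n)$ on $V^n$. Since this chain is irreducible and aperiodic (continuous time plus independent jump targets preclude any parity obstruction), it converges exponentially fast to its unique stationary distribution $\pi_n$, giving $\E[M^{v_1}_t \cdots M^{v_n}_t] \to \mu_n(v_1,\ldots,v_n)$ for every fixed tuple and every starting configuration with $\sum_v M^v_0=k$. Because polynomials in the coordinates $m_v$ are dense in $C(\Sigma_k)$ by Stone--Weierstrass, this moment convergence upgrades to weak convergence $\calM_t \Rightarrow Q$ on $\Sigma_k \subset [0,k]^k$. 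Applied to any stationary $Q'$, the same computation shows $Q'$ has the same moments as $Q$, so $Q'=Q$; hence $Q$ is the unique stationary distribution.

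The main obstacle is verifying the moment duality rigorously. One must show that the ``coupled loss'' of mass when $j$ of the $n$ tagged particles coincide at a single hit vertex is reproduced by the WIMP rule of a single simultaneous jump with rate $1$ and independently chosen targets; this is a delicate but finite combinatorial matching of generators. A secondary technical point is checking the exchange of limits needed to deduce stationarity of $Q$ from moment convergence, which is routine given compactness of $\Sigma_k$ and the Feller property.
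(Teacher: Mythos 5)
You should note at the outset that this paper does not actually prove Theorem \ref{a29.7}; it is imported from \cite{BBPS}, where the proof (of Theorem 3.2 there, whose first displayed formula is the $L^1$-contraction $|\calT(\bc^1,x)-\calT(\bc^2,x)|\leq|\bc^1-\bc^2|$ quoted here as \eqref{ag15.4}) runs two copies of the meteor process driven by the same Poisson clocks and shows that the $L^1$ distance between them is nonincreasing and tends to zero via a random-walk/WIMP coupling, giving convergence and uniqueness pathwise. Your route is genuinely different: a moment duality against $n$-tuples of WIMPs, then convergence of the dual finite-state chain, then Stone--Weierstrass on the compact simplex $\{\sum_v m_v=k\}$. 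The duality you write down is correct; the cleanest way to make it rigorous is not generator matching but the identity $\P(Z^j_t=x\mid\calF_t)=M^x_t/k$ (the conditional law of a WIMP given $\calM_0$ and the driving Poisson processes equals the normalized mass) together with conditional independence of the $n$ WIMPs given that $\sigma$-field --- exactly the facts used at time $0$ in the proof of Theorem \ref{n14.6}. Your approach buys explicit limiting moments (it is in effect how stationary moments are computed in this paper), while the contraction/coupling proof of \cite{BBPS} gives pathwise control of the kind that is adapted to infinite graphs in Sections \ref{st_ex}--\ref{st_conv}.

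Two points in your write-up need repair, both fixable. First, the dual chain $(Z^1,\dots,Z^n)$ on $V^n$ is \emph{not} always irreducible: on the two-vertex graph, two WIMPs occupying the same vertex must jump simultaneously to the unique neighbor and can never separate, so the diagonal is absorbing and the off-diagonal states are transient. What is true, and is all your argument needs, is that from every configuration the chain can reach the fully coalesced states $(x,\dots,x)$, that these states communicate, and hence that there is a unique closed communicating class; a finite continuous-time chain with a unique closed class converges from every initial state to its unique stationary law, so the limiting moments are still independent of the starting configuration. Second, uniqueness must be asserted only among stationary distributions carried by $\{\sum_v m_v=k\}$: total mass is conserved, so each mass level (and any mixture of levels) carries stationary laws, and your moment comparison proves uniqueness precisely within the mass-$k$ class, which is what the theorem means. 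With these adjustments, and with the conditional-law identity substituted for the generator-matching step, your proposal is a complete and correct alternative proof.
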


\begin{remark}\label{j13.2}
It is easy to see that, for a finite graph $G$, there exists a stationary version of the process $\calM_t$ on the whole real line, i.e., there exists a process $\{\calM_t, t\in\R\}$, such that the distribution of $\calM_t$ is the stationary measure $Q$ for each $t\in \R$. Moreover, one can construct independent Poisson processes $\{N^v_t,t\in \R\} $, $v\in V$, on the same probability space, such that $\{\calM_t, t\in\R\}$ jumps according to the algorithm described above, relative to these Poisson processes. We set $N^v_0=0$ for all $v$ for definiteness.
\end{remark}

The following result has been proved in \cite{BBPS} for finite graphs.

\begin{proposition}\label{a29.8}
Suppose $G$ is finite.
Let $T^v_t$ denote the time of the last jump of $N^v$ on the interval $[0,t]$, with the convention that $T^v_t=-1$ if there were no jumps on this interval. Let $U(v)=\{v\} \cup\{x\in V: x\lra v\}$.

(i) Assume that $ M^v_0 + M^x_0 >0$ for a pair of adjacent vertices $v$ and $x$. Then, almost surely, for all $t\geq 0$,  $ M^v_t + M^x_t >0$.

(ii) 
Let $R_t$ be the number of pairs $(x,v)$ such that $x\lra v$ and 
$ M^v_t + M^x_t =0$. The process $R_t$ is non-increasing, a.s.

(iii) Assume that $  M^x_0 >0$ for  $x\in U(v)\setminus\{v\}$.
Then $M^v_t = 0$ if and only if one of the following conditions holds: (a) $T^v_t = \max \{T^x_t: x\in U(v)\}>-1$ or (b) $M^v_0 = 0$ and $ \max \{T^x_t: x\in U(v)\setminus\{v\}\}=-1$.

(iv) Suppose that the process $\{\calM_t, t\geq 0\}$ is in the stationary regime, that is, its distribution at time 0 is the stationary distribution $Q$. Then $ M^v_t + M^{x}_t >0$ for all $t\geq 0$ and all pairs of adjacent vertices $v$ and $x$, a.s.  

(v) 
Recall from Remark \ref{j13.2} the stationary meteor process $\{\calM_t, t\in\R\}$ and the corresponding Poisson processes $\{N^v_t,t\in \R\} $, $v\in V$. 
Let $T^v$ denote the time of the last jump of $N^v$ on the interval $(-\infty, 0]$ and note that $T^v$ is well defined for every $v$ because such a jump exists, a.s. 
Then $M^v_0 =0$ if an only if $T^v = \max \{T^x: x\in U(v)\}$.

\end{proposition}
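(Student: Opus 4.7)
The plan is to establish the five parts in order, since later parts build on earlier ones. For part (i), the key observation is that the pair-sum $M^v_t + M^x_t$ changes only at hits of $v$ or $x$ (a hit at a vertex outside $\{v,x\}$ but adjacent to one of them only adds mass). At a hit of $v$, $M^v_{t-}$ drops to $0$ but $M^v_{t-}/d_v$ is deposited at $x$, and analogously at a hit of $x$. Hence if the pair-sum is positive just before such a hit, it remains positive just after: if $M^v_{t-}>0$, then after a hit at $v$ the new sum is at least $M^v_{t-}/d_v$, and after a hit at $x$ it is at least $M^v_{t-}$. A straightforward induction on the jump times of $N^v$ and $N^x$ then yields (i), from which (ii) is immediate.

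For (iii), the implication from (a) or (b) to $M^v_t = 0$ is direct. For the converse, set $t^* = \max\{T^y_t : y\in U(v)\}$. If $t^* = -1$, then $M^v_t = M^v_0$ forces (b); if $t^* = T^v_t$, we are in (a). The remaining case $t^* = T^x_t$ with $x\ne v$ is ruled out using $M^x_0 > 0$: part (i) applied to $\{v,x\}$ gives $M^v_{t^*-} + M^x_{t^*-} > 0$, and since no hit occurs at $v$ at time $t^*$,
\[
M^v_{t^*} = M^v_{t^*-} + M^x_{t^*-}/d_x \geq \max\{M^v_{t^*-},\, M^x_{t^*-}/d_x\} > 0.
\]
No later events in $U(v)$ perturb $M^v$, so $M^v_t > 0$, contradicting $M^v_t = 0$.

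For (iv), stationarity gives $\E R_t = \E R_0$ for all $t$, while (ii) gives $R_t \le R_0$ a.s.; since $R_t$ is integer-valued and càdlàg, this forces $R_t = R_0$ for all $t\geq 0$, a.s. Suppose for contradiction that $\P(R_0 \geq 1) > 0$. On this event, pick an adjacent pair $(v,x)$ with $M^v_0 = M^x_0 = 0$; by connectedness of $G$ and positivity of the total mass $k$, there is a vertex $y_0$ with $M^{y_0}_0 > 0$ together with a path $y_0, y_1, \ldots, y_m = v$. The event that on a short initial interval $[0,\eps]$ the processes $N^{y_0}, N^{y_1}, \ldots, N^{y_{m-1}}$ fire once each in this order while all other Poisson processes in a suitable finite set stay silent has strictly positive probability and produces $M^v_t + M^x_t > 0$, hence $R_t < R_0$, contradicting $R_t = R_0$ a.s. This propagation event is the main technical point: the ``silent'' set must be chosen carefully so that the mass deposited at each $y_i$ is not drained before it is forwarded to $y_{i+1}$.

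Part (v) is the bi-infinite analogue of (iii). By Remark \ref{j13.2} the stationary process extends to $\R$, and by (iv) applied with an arbitrary negative time as initial time, $M^v_s + M^x_s > 0$ for all $s\in\R$ and all adjacent pairs, a.s. The forward-direction argument of (iii) then applies verbatim with $t^* = \max\{T^y : y\in U(v)\}$, which is finite a.s.\ because each $N^y$ has infinitely many jumps in $(-\infty, 0]$. The neighbor case $t^* = T^x$, $x\ne v$, is excluded using (iv) in place of the explicit hypothesis $M^x_0>0$, leaving only $t^* = T^v$, which is precisely the claimed characterization.
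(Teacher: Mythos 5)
The paper itself offers no proof of Proposition \ref{a29.8} --- it is quoted from \cite{BBPS} --- so there is no in-paper argument to compare against; judged on its own merits, your proof is correct and follows the natural elementary route: pathwise preservation of the pair-sum in (i) (the sum can only drop at hits of $v$ or $x$, and such a hit leaves it positive), (ii) as a direct consequence, (iii) by case analysis on the last hit time in $U(v)$ using (i) for the maximizing neighbor, (iv) by playing stationarity of $\E R_t$ against the monotonicity from (ii) and then exhibiting a positive-probability mass-propagation event, and (v) by transferring (iii)--(iv) to the two-sided stationary process of Remark \ref{j13.2}. Two points deserve tightening. In (iv), the pair $(v,x)$, the vertex $y_0$ and the connecting path are configuration-dependent, so before invoking ``the event \dots has strictly positive probability'' you should pigeonhole over the finitely many possibilities: fix a deterministic adjacent pair, vertex and \emph{simple} path with $\P(M^v_0=M^x_0=0,\,M^{y_0}_0>0)>0$, and then intersect with the deterministic firing-order event, which is independent of $\calM_0$; with the path simple and all other vertices silent on $[0,\eps]$, each $y_{i+1}$ holds positive mass when it fires, so positive mass reaches $v$ and $R_\eps<R_0$ on a set of positive probability. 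In (v), your paragraph only carries out the exclusion of the neighbor case; you should also record the (easy) converse, that $T^v=\max\{T^x:x\in U(v)\}$ forces $M^v_0=0$ because the hit at $T^v$ zeroes $v$ and no later hit in $U(v)$ can deposit mass, and similarly in (i) you should note the symmetric case $M^x_{t-}>0$, which your displayed inequalities cover only when $M^v_{t-}>0$. These are presentational gaps, not flaws in the method.
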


\section{Stationary distribution on $\Z^d$.}\label{st_ex}

Recall that $\calC_k$ denotes the circular graph with $k$ vertices, $k\geq 2$. In other words, the vertex set of $\calC_k$ is $\{1,2,\dots, k\}$ and the only pairs of vertices joined by edges are of the form $(j, j+1)$ for $j=1,2, \dots, k-1$, and $(k,1)$.

We will show that for any $d\geq 1$, stationary distributions for meteor processes on tori $\calC_k^d$ converge, as $k\to \infty$, to a stationary distribution for the meteor process on $\Z^d$, in an appropriate sense.
We need the following notation to state the theorem.

We equip the space $\R^{\Z^d}$ with a metric $\rho$ defined by
\begin{align*}
\rho(f,g) = \sum _{x\in \Z^d}( |f(x) - g(x)|\land 1) 2^{- |x|},
\qquad f,g \in \R^{\Z^d}.
\end{align*}
Note that $\lim_{n\to \infty} \rho(f_n, g_n) = 0$ if and only if $\lim_{n\to \infty} |f_n(x) - g_n(x)|  =0$ for every $x\in \Z^d$.  We define the Skorokhod space $D([0,\infty), \R^{\Z^d})$ of RCLL functions and its topology in the usual way relative to the metric $\rho$.

For $n\geq 1$, let $K_n =\{1, 2, \dots , n\}^d\subset \Z^d$ and $K'_n = K_n - (\lfloor n/2 \rfloor, \dots, \lfloor n/2 \rfloor)$. In other words, $K'_n$ is $K_n$ is shifted so that it is (almost) centered at the origin.

Consider any $d\geq 1$.
Let $V_k$ be the vertex set of $\calC_k^d$ and let $\calM^k_t = \{M^{k,x}_t, x\in V_k\}$ be the meteor process on $\calC_k^d$, with the average mass 1 per vertex. 
Let $Q_k$ be the stationary distribution for $\calM^k$. 

Consider the graph with the vertex set $K'_k$ and edges connecting vertices at distance 1 (according to the Euclidean distance). This graph can be embedded in the obvious way into 
$\calC_k^d$. The vertex sets $K'_k$ and $V_k$ of the two graphs are in one to one correspondence so we will consider the process $\calM^k$ as a process on $K'_k$, although its transitions do not respect the edge structure of $K'_k$.

For each $k\geq 2$, $t\geq 0$ and $x\in \Z^d$, let $ M^{k,x}_t = M^{k,y}_t $, where $y\in K'_k$ is the unique vertex such that $x-y = k v$ for some $v\in \Z^d$. By abuse of notation, we will write 
$\calM^k_t = \{M^{k,x}_t, x\in \Z^d\}$ and we will use $Q_k$ to denote the stationary distribution for the process $\{M^{k,x}_t, x\in \Z^d\}$.

\begin{theorem}\label{n13.1}
(i) 
The distributions $Q_k$ converge to a distribution $Q_\infty$ on $\R^{\Z^d}$ as $k\to \infty$. 

(ii) If the distribution of $\calM^k_0$ is $Q_k$ for every $k$, then processes 
$\{\calM^{k}_t, t\geq 0\}$ converge weakly in the Skorokhod space $D([0,\infty), \R^{\Z^d})$ to a process $\{\calM^{\infty}_t, t\geq 0\}$ with the initial distribution equal to $Q_\infty$, when $k\to \infty$.
\end{theorem}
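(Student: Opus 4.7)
The plan is to put all the stationary processes $\{\calM^k\}_{k\ge 2}$ on a single probability space by feeding them a common family of independent Poisson processes $\{N^v_t:t\in\R,\,v\in\Z^d\}$, viewed as driving Poisson processes on $\calC_k^d$ through the natural quotient map. Each $\calM^k$ is taken to be the two-sided stationary process guaranteed by Remark~\ref{j13.2}, so that $\calM^k_t$ is a measurable function of the Poisson data alone. The goal is to prove that for every $v\in\Z^d$ and $t\in\R$ there is an a.s.\ finite random region $\calR(v,t)\subset\Z^d\times(-\infty,t]$, depending only on the Poisson processes and \emph{not on $k$}, such that $M^{k,v}_t$ is a.s.\ determined by the Poisson points in $\calR(v,t)$. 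Granting this, for almost every $\omega$ there exists $k_0=k_0(\omega,v,t)$ such that for $k\ge k_0$ the spatial projection of $\calR(v,t)$ injects into $\calC_k^d$; hence $M^{k,v}_t$ stabilises to a common value $M^{\infty,v}_t$, and $\calM^\infty_t=\{M^{\infty,v}_t\}_{v\in\Z^d}$ is a well-defined random element of $\R^{\Z^d}$.

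\textbf{Local backward representation.} For any $T>0$, Proposition~\ref{de22.1} applied on the interval $[t-T,t]$ in place of $[0,T]$ yields
\begin{align*}
M^{k,v}_t \;=\; \sum_{\Lambda}\, M^{k,\Lambda_{t-T}}_{t-T}\,(2d)^{-j(\Lambda)},
\end{align*}
summed over the (a.s.\ finite) collection of acceptable paths $\Lambda$ on $[t-T,t]$ with $\Lambda_t=v$, where $j(\Lambda)$ counts the jumps of $\Lambda$ and $2d$ is the common vertex degree of $\Z^d$ and $\calC_k^d$. The associated backward exploration visits a set $A_j$ of vertices with $|A_j|\le 1+j$ using waiting times $S_j$ dominated below by independent exponentials of mean $1/((1+j)\cdot 2d)$, independently of $k$. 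Consequently, once the exploration visits fewer than $J$ vertices, the collection of acceptable paths and the value of $M^{k,v}_t$ up to the initial-mass remainder is determined by Poisson data in a spatial region of at most $1+J$ vertices, identically for all sufficiently large $k$.

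\textbf{Taking $T\to\infty$.} The key step is to show that the remainder $\sum_\Lambda M^{k,\Lambda_{t-T}}_{t-T}(2d)^{-j(\Lambda)}$ coming from paths that reach back to time $t-T$ tends to zero in $L^1$ as $T\to\infty$, uniformly in $k$. Each acceptable path of length $j$ carries the deterministic weight $(2d)^{-j}$, while stationarity gives $\E M^{k,w}_{t-T}=1$ for all $k,w$. Combined with the subcritical domination from the bound $|A_j|\le 1+j$ in Proposition~\ref{de22.1}, the identity $\E M^{k,v}_t = 1$ yields a uniform $L^1$ bound on the sum of path weights of length $\ge J$ that can be made arbitrarily small by choosing $J$ large, while almost-sure termination of the exploration at any finite depth follows from the three-series argument in Proposition~\ref{de22.1}. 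Together these imply that $M^{k,v}_t$ is a.s.\ measurable with respect to the Poisson data in an a.s.\ finite random region $\calR(v,t)$ independent of $k$. It follows that $\calM^k_0\to\calM^\infty_0$ a.s.\ in the metric $\rho$, proving (i) with $Q_\infty:=\mathrm{law}(\calM^\infty_0)$.

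\textbf{Part (ii) and the main obstacle.} For (ii), the same coupling is applied on a compact window $[0,T^*]$ with a finite set $V_0\subset\Z^d$: only finitely many Poisson events contribute to jumps of $\calM^k|_{V_0\times[0,T^*]}$, and each has an a.s.\ finite influence region, so for $k$ large enough (depending on $\omega$) the sample paths of $\calM^k$ and $\calM^\infty$ agree on $V_0\times[0,T^*]$, which is stronger than convergence in the Skorokhod topology on $D([0,\infty),\R^{\Z^d})$. The hardest technical point is the preceding paragraph: establishing the $L^1$ tail bound on the initial-mass remainder \emph{uniformly in $k$}, which is where the linear growth $|A_j|\le 1+j$ of the backward cluster must be played off against the $(2d)^{-j}$ dispersion factor and the stationary mean of the masses on each $\calC_k^d$.
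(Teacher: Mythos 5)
The central claim of your argument is false, and it is not repairable in the form stated. In the backward representation
\begin{align*}
M^{k,v}_t \;=\; \sum_{\Lambda}\, M^{k,\Lambda_{t-T}}_{t-T}\,(2d)^{-j(\Lambda)},
\end{align*}
\emph{every} acceptable path reaches back to time $t-T$, and the total weight $\sum_\Lambda (2d)^{-j(\Lambda)}$ does not become small: it is exactly the time-$t$ mass at $v$ that would result from the all-ones configuration at time $t-T$, so its expectation equals $1$ for every $T$. Nor does truncating at a path length $J$ help: a path started at a fixed vertex makes on the order of Poisson$(T)$ jumps, so for any fixed $J$ the expected weight carried by paths with at least $J$ jumps tends to $1$, not $0$, as $T\to\infty$. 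Hence there is no a.s.\ finite, $k$-independent region $\calR(v,t)$ of Poisson points determining $M^{k,v}_t$, and the assertion that the stationary process of Remark \ref{j13.2} is ``a measurable function of the Poisson data alone'' is unsupported --- Remark \ref{j13.2} only constructs the stationary process jointly with compatible Poisson processes. Indeed such noise-measurability cannot hold on $\Z^d$ in any naive sense: mass is conserved and redistributed, and Remarks \ref{j12.1} and \ref{de23.2} point out that the same driving noise is compatible with the mutually singular stationary laws $Q^{(c)}_\infty$, so the configuration at time $t$ genuinely remembers the far past; the dependence does not vanish pathwise, it can at best be averaged out.

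What your ``remainder $\to 0$'' step is silently standing in for is precisely the hard content of the paper's proof: one must show that the quenched weights spread the time-$(t-T)$ masses over many blocks, and that under $Q_k$ the block sums concentrate, so that the weighted average stabilizes. The paper does this with two ingredients for which your proposal has no substitute: the variance bound $\Var_{Q_k}(\sum_{x\in K^j_n} M^{k,x}_0)\to |\prt K^j_n|/(2d)$ from \cite{BBPS} (negative correlations making block masses nearly deterministic), and the coupling Lemma \ref{n14.1}, which shows that two random walks driven by the \emph{same} family of Poisson clocks coalesce quickly enough, uniformly over the relevant scales in $k$; these are combined in the block decomposition $M^{k,x}=M^{1,k,x}+M^{2,k,x}+M^{3,k,x}$ to contradict a quantitative failure-of-convergence bound. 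Your part (ii) inherits the same gap, since it again assumes each $\calM^k$ is a factor of the common noise; by contrast, the paper's Step 4 uses the graphical construction only \emph{after} part (i) is proved, coupling the initial conditions by Skorokhod representation rather than claiming noise-measurability.
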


We will give the proof only in the case $d\geq 2$. The case $d=1$ can be treated using a simplified version of the argument given below.

The idea of  the proof of Theorem \ref{n13.1} is the following.
If the distributions $Q_k$ do not converge then for any sequence
of processes with these distributions constructed on the same 
probability space, there will be some pairs
of these processes with large indices which will have different 
values of the mass at some finite set of vertices, with non-vanishing
probability. This will be proved to be false by representing
masses using ``grains of sand'', that is, random walks (a single mass
can be thought of as consisting of a very large number of
grains of sand). We will couple pairs of random walks with
jump times and locations determined by the same system
of Poisson processes, and we will show that they will meet
sufficiently fast for our purposes. The precise formulation
of the coupling is given in the next lemma. Using this coupling,
we can construct a sequence of meteor processes on the same probability space,
with distributions $Q_k$,
with masses very close to each other for large indices, thus contradicting
the original assumption.

For any (discrete time or continuous time) stochastic process $R$, any set $A$ and an element $a$ of the state space of $R$ (for example, $a$ can be a number), let
 $T(R, A) = \inf\{t\geq 0: R_t \in A\}$ and
$T(R, a) = \inf\{t\geq 0: R_t =a\}$. For real $a$, we will write
$T^+(R, a) = \inf\{t\geq 0: R_t \geq a\}$ and $T^-(R, a) = \inf\{t\geq 0: R_t \leq a\}$.

\begin{lemma}\label{n14.1}
Suppose that $\{N^x_t, t\in \R\}$, $x\in \Z^d$, are independent Poisson processes.
For any $\beta\in(0,1)$ and
$\delta_1 >0$ there exist $\delta >0$, $a_0>1$, $m_1$ and $\lambda >1$ such that
\begin{align}
\label{de1.1}
(1/\beta) (1-\delta) &> 1,\\
(1+\lambda + 3 \delta)/2 &< (1/\beta) (1-\delta), \label{de1.2}
\end{align}
and for all $a\geq a_0$, $m\geq m_1$, 
and $ z_0, \wt z_0 \in \Z^d$
such that $|z _0 - \wt z_0| \leq  a^{\lambda^m }$,
one can construct a coupling  
of continuous time random walks $Z$ and $\wt Z$ on $\Z^d$, 
starting from $Z _0 = z_0$ and $\wt Z_0  = \wt z_0$,
with jumps determined by $\{N^x_t, t\in \R\}$, $x\in \Z^d$, (the same family for both $Z$ and $\wt Z$), in the sense of Section \ref{sec:pre}, and with the following properties.

(i) Let $ t_* = 2 a^{(1+\lambda+ 2\delta)\lambda^{m}}$. Then
\begin{align}\label{de4.1}
&\P\left(\sup_{0\leq t \leq t_*}|Z_{t} - Z_0| \geq a^{(1+\lambda+ 3\delta)\lambda^{m}/2} \right) \leq \delta_1,\\
&\P\left(\sup_{0\leq t \leq t_*}|\wt Z_{t} -\wt Z_0| \geq a^{(1+\lambda+ 3\delta)\lambda^{m}/2} \right) \leq \delta_1.\label{de4.2}
\end{align} 

(ii)
Let 
\begin{align}\label{de4.6}
T_* = t_* \land T^+(|Z_\cdot-  Z_0|, a^{(1+\lambda+ 3\delta)\lambda^{m}/2})
\land T^+(|\wt Z_\cdot- \wt Z_0|, a^{(1+\lambda+ 3\delta)\lambda^{m}/2} ).
\end{align}
Then 
\begin{align}\label{n14.2}
\P (T(Z- \wt Z, \bzero) < T_*  ) > 1-\delta_1.
\end{align}
\end{lemma}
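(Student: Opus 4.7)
The proof has four stages. First, I fix parameters: given $\beta$ and $\delta_1$, pick $\delta>0$ small enough that $\delta<1-\beta$ so (\ref{de1.1}) holds, then pick $\lambda\in(1,2(1-\delta)/\beta-1-3\delta)$ so (\ref{de1.2}) holds; the thresholds $a_0,m_1$ are chosen large at the end to absorb the quantitative estimates below.

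Second, for (i), each of $Z,\wt Z$ is marginally a rate-$1$ continuous-time simple random walk on $\Z^d$, so $|Z_t-Z_0|^2-t$ is a martingale and $|Z_t-Z_0|^2$ a submartingale with expectation $t$. Doob's inequality yields
\begin{align*}
\P\Bigl(\sup_{0\le s\le t_*}|Z_s-Z_0|\ge a^{(1+\lambda+3\delta)\lambda^m/2}\Bigr)\le\frac{t_*}{a^{(1+\lambda+3\delta)\lambda^m}}=2\,a^{-\delta\lambda^m},
\end{align*}
which is below $\delta_1$ whenever $a_0^{\delta\lambda^{m_1}}\ge 2/\delta_1$; the identical bound for $\wt Z$ gives (\ref{de4.1})-(\ref{de4.2}).

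Third, I construct the coupling in the WIMP framework of Definition~\ref{wimps}: while $Z_{s-}\ne\wt Z_{s-}$ the walks jump on independent Poisson clocks at their distinct locations, with direction coupling to be specified; once they meet, they share the common Poisson clock at their common vertex and I force them to jump to the same neighbor, so they coalesce permanently. The task reduces to bounding the hitting time of $\bzero$ by $D_s:=Z_s-\wt Z_s$. With independent uniform direction choices, $D$ evolves as a rate-$2$ simple random walk on $\Z^d$ absorbed at $\bzero$. For $d=2$ (recurrent), I chop $[0,t_*]$ into $\sim a^{(\lambda-1+2\delta)\lambda^m}$ sub-intervals of length $\asymp a^{2\lambda^m}\asymp|D_0|^2$; the standard logarithmic Green's-function computation for $2$D SRW ($G_{R^2}(x,\bzero)=\Theta(1)$ and $G_{R^2}(\bzero,\bzero)=\Theta(\log R)$ with $R=a^{\lambda^m}$) gives that in each sub-interval $D$ hits $\bzero$ with probability at least $c/(\lambda^m\log a)$, so by the strong Markov property the probability of failing in all sub-intervals is bounded by $\exp\!\bigl(-c\,a^{(\lambda-1+2\delta)\lambda^m}/(\lambda^m\log a)\bigr)$, far below $\delta_1$ for $a,m$ large. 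Combined with (i) this yields (\ref{n14.2}) when $d=2$.

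The main obstacle is the transient case $d\ge 3$, where with the naive coupling $\P(T(D,\bzero)<\infty)\sim|D_0|^{2-d}$, which is far too small. I plan to replace independent direction choices by a coupling that biases the joint dynamics of $(Z,\wt Z)$ toward meeting while preserving each walker's marginal SRW law — a lattice analogue of reflection coupling along the hyperplane bisecting $z_0-\wt z_0$, designed so that the reduced dynamics of $D$ is an effectively recurrent one-dimensional walk along the line $\R(z_0-\wt z_0)$. The delicate point is realizing such a coupling compatibly with the WIMP constraint that $Z$ and $\wt Z$ are clocked by disjoint Poisson processes whenever they are at different vertices, and verifying that it produces a constant per-sub-interval hitting probability reducing the $d\ge 3$ problem to the same sub-interval scheme as $d=2$. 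This verification is where I expect most of the work to lie.
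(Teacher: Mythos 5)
Part (i) of your proposal is fine, but part (ii) — the heart of the lemma — has two genuine failures. First, even in $d=2$ your coupling cannot give the stated bound. With independent direction choices, $D=Z-\wt Z$ is indeed a rate-$2$ planar SRW away from $\bzero$, but a planar SRW started at distance $r\approx a^{\lambda^m}$ hits $\bzero$ by time $t_*=2a^{(1+\lambda+2\delta)\lambda^m}=r^{1+\lambda+2\delta+o(1)}$ only with probability of order $\log(t_*/r^2)/\log t_*\approx(\lambda-1+2\delta)/(1+\lambda+2\delta)$, a constant bounded away from $1$; the complementary event (leaving the ball of radius $\sqrt{t_*}$ before hitting $\bzero$) has probability about $2/(1+\lambda+2\delta)$. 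Your sub-interval iteration hides this: the per-interval hitting probability $c/(\lambda^m\log a)$ is valid only when the sub-interval starts within distance $\approx a^{\lambda^m}$ of $\bzero$, whereas conditionally on failure in the earlier sub-intervals $D$ is typically at distance $\gg a^{\lambda^m}$ (diffusively growing), so the failures cannot be multiplied as independent trials with that success rate and the claimed $\exp(-c\,a^{(\lambda-1+2\delta)\lambda^m}/(\lambda^m\log a))$ bound is false. Since \eqref{de1.2} caps $\lambda$ below $2(1-\delta)/\beta-1-3\delta$ (so $\lambda$ is forced close to $1$ when $\beta$ is close to $1$), you cannot rescue the argument by taking $\lambda$ large; a smarter coupling is needed already in $d=2$. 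Second, for $d\ge3$ you do not give a construction at all — you only sketch the hope of a reflection-type coupling and yourself flag that its compatibility with the constraint that both walks are clocked by the single Poisson family $\{N^x\}$ is unverified. That compatibility problem is precisely what the lemma is about, so the proposal leaves the main content unproven.

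For comparison, the paper's proof does not work with the difference walk at all. It mirror-couples the embedded discrete-time skeletons coordinate by coordinate (so each coordinate difference is a recurrent lazy one-dimensional walk, which works uniformly in $d$), and the genuine difficulty it then resolves is exactly the one you deferred: because $Z$ and $\wt Z$ sit at different vertices, their jump times come from independent Poisson clocks, so matched skeletons do not yield matched positions at a common time. The paper controls the discrepancy in the numbers of jumps of the two walks up to time $a^{(1+\lambda+2\delta)\lambda^m}$ (events $A_4$, $A_5$), the oscillation of the skeleton over that window of indices ($A^i_1$, via Kolmogorov's inequality), and uses gambler's ruin and a first-moment bound ($A^i_2$, $A^i_3$) to conclude that after one stage the distance drops from $a^{\lambda^m}$ to $a^{\lambda^{m-1}}$ with high probability; it then iterates down the doubly exponential scales, finishes at the bottom scale with a fixed positive-probability coupling and geometric trials, and only afterwards upgrades to the quantitative statement with $T_*$. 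If you want to complete your proposal you would need to import essentially this entire mechanism (or an equivalent treatment of the asynchronous clocks), not merely a recurrence estimate for the difference walk.
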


The proof of the lemma is quite technical so it will
be presented at the end of Section \ref{st_ex}.

\begin{proof}[Proof of Theorem \ref{n13.1}]

{\it Step 1}. 
We will first prove part (i) of the theorem.
Since $\E_{Q_k} M^{k,x}_0 = 1$ for every $x$ and $k$, it follows that for every fixed $x$, the family of distributions of $\{M^{k,x}_0, k\geq 1\}$ is tight. Hence, for every fixed $x_1, \dots, x_j$, 
the family of $j$-dimensional distributions of $\{(M^{k,x_1}_{0}, \dots, M^{k,x_j}_{0}), k\geq 1\}$ is tight.
Using the diagonal method, we can find a subsequence $k_m$ such that 
for any $x_1, \dots, x_j$, the  distributions of $\{(M^{k_m,x_1}_{0}, \dots, M^{k_m,x_j}_{0}), m\geq 1\}$
converge. The limiting distributions are consistent by construction so there exists a distribution $Q$ on $\R^{\Z^d}$ whose restriction to any  $x_1, \dots, x_j$ is equal to the limit of the distributions of $\{(M^{k_m,x_1}_{0}, \dots, M^{k_m,x_j}_{0}), m\geq 1\}$.

Assume that part (i) of the theorem is false, i.e., $Q_k$ do not
converge to $Q$.  Then there exist $x_1, \dots, x_{i_1}\in \Z^d$ and $\ell_m$ such that  $\ell_m\to\infty$ as $m\to \infty$, and the vectors
$(M^{\ell_m,x_1}_{0}, \dots, M^{\ell_m,x_{i_1}}_{0})$ do not have the same limiting distribution as $(M^{k_m,x_1}_{0}, \dots, M^{k_m,x_{i_1}}_{0})$ when $m\to \infty$. 

Suppose that $\{\calM^{\ell_m}_{t}, t\geq 0\}$, $ m\geq 1$, and $\{\calM^{k_m}_{t}, t\geq 0\}$, $ m\geq 1$, are constructed on the same probability space. This implies that the distribution of $\calM^{\ell_m}_{t}$ is $Q_{\ell_m}$ and 
the distribution of $\calM^{k_m}_{t}$ is $Q_{k_m}$ for all $m\geq 1$ and $t\geq0$. But we do not assume anything about the relationship between the two families of processes; in particular, we do not assume that the family $\{\calM^{\ell_m}_{t}, t\geq 0\}$, $ m\geq 1$, is independent of $\{\calM^{k_m}_{t}, t\geq 0\}$, $ m\geq 1$.

The assumption that 
$(M^{\ell_m,x_1}_{0}, \dots, M^{\ell_m,x_{i_1}}_{0})$ and $(M^{k_m,x_1}_{0}, \dots, M^{k_m,x_{i_1}}_{0})$ do not have the same limiting distribution implies that there exist $c_1, p_1 >0$ such that for every $m_0$ there exists $m> m_0$ such that,
\begin{align}\label{n5.1}
\P\left( \sum_{i=1}^{i_1} |M^{k_m,x_i}_{0}-  M^{\ell_m,x_i}_{0} | > c_1\right) > p_1.
\end{align}
Note that $c_1$ and $p_1$ can  depend on the (``marginal'') distributions of $(M^{\ell_m,x_1}_{0}, \dots, M^{\ell_m,x_{i_1}}_{0})$ and $(M^{k_m,x_1}_{0}, \dots, M^{k_m,x_{i_1}}_{0})$ for $m\geq 1$, but they can be chosen so that they do not depend on the (``joint'') distributions of $(M^{\ell_m,x_1}_{0}, \dots, M^{\ell_m,x_{i_1}}_{0},M^{k_m,x_1}_{0}, \dots, M^{k_m,x_{i_1}}_{0})$
for $m\geq 1$.

Let $i_2 = 2\left\lceil \max_{1\leq i,j \leq i_1} |x_i-x_j|\right\rceil$.
Let $\Gamma_n^1 = \{x_1, \dots, x_{i_1}\}$ and let $\{\Gamma^j_n, j=1, \dots, i_3\}$ be the family of all sets of the form $\Gamma^1_n + i_2 v$ for some $v\in \Z^d$, such that $\Gamma^1_n + i_2 v \subset K'_n $. If $j\ne i$ then $\Gamma^j_n \cap \Gamma^i_n = \emptyset$.
Note that $i_3 = i_3(n) \geq \lfloor n/(2 i_2) \rfloor ^ d \geq c_2 n^d $
for $n \geq 2 i_2$. We obtain from \eqref{n5.1} that for every $m_0$ there exists $m> m_0$ such that,
\begin{align}\label{au6.2}
\E  \sum_{x\in K_n} |M^{k_m,x}_{0}-  M^{\ell_m,x}_{0} | 
&=
\E  \sum_{x\in K'_n} |M^{k_m,x}_{0}-  M^{\ell_m,x}_{0} | 
\geq
\E \sum_{j=1}^{i_3} \sum_{x\in \Gamma^j_n} |M^{k_m,x}_{0}-  M^{\ell_m,x}_{0} |
\nonumber \\ 
&= i_3 
 \sum_{x\in \Gamma^1_n} \E |M^{k_m,x}_{0}-  M^{\ell_m,x}_{0} | 
\geq i_3 c_1 p_1 \geq c_3 n^d. 
\end{align}

By stationarity, for every $t\geq 0$,
the distributions of $(M^{\ell_m,x_1}_{t}, \dots, M^{\ell_m,x_{i_1}}_{t})$ and $(M^{k_m,x_1}_{t}, \dots, M^{k_m,x_{i_1}}_{t})$
are the same as those of the vectors $(M^{\ell_m,x_1}_{0}, \dots, M^{\ell_m,x_{i_1}}_{0})$ and $(M^{k_m,x_1}_{0}, \dots, M^{k_m,x_{i_1}}_{0})$.
In view of the remark following \eqref{n5.1}, that formula applies also at time $t$. Thus \eqref{au6.2} also applies at any $t\geq 0$, i.e.,
\begin{align}
\E  \sum_{x\in K_n} |M^{k_m,x}_{t}-  M^{\ell_m,x}_{t} | 
& \geq c_3 n^d. \label{n5.5}
\end{align}

We will construct $\{\calM^k_t, t\geq 0\}$ on a common probability space in such a way  that the last inequality is false for large $n$ and hence part (i) of the theorem is true.

\medskip

{\it Step 2}.
Consider $k_0,\ell_0$ and $n$ such that for all $k\geq k_0$ and $\ell \geq \ell_0$, $K_n \subset K'_k \cap K'_\ell$.
Fix some $\beta \in \left(0 , 1\right)$. We will consider pairs of positive integers $n$ and $n_\beta $ such that $n$ is the smallest integer greater than or equal to $n_\beta ^{1/\beta}$ which is divisible by $n_\beta$. 
Let $K_n^1 = \{1, \dots, n_\beta\}^d$ and let $\{K^j_n, j=1, \dots, j_n\}$ be the family of all sets of the form $K^1_n + n_\beta v$ for some $v\in \Z^d$, such that $(K^1_n + n_\beta v) \cap K_n \ne \emptyset$. We will write $\calJ= \{1, \dots, j_n\}$.

 We have 
\begin{align}\label{de3.2}
\E_{Q_k} \left(\sum_{x\in K^j_n} M^{k,x}_0\right) = |K^j_n| = n_\beta^d.
\end{align}
Let $\prt K^j_n$ be the set of (nearest neighbor) edges
in $\Z^d$ such that exactly one of the endpoints is in $K^j_n$. We have $|\prt K^j_n| = 2d n_\beta^{d-1}$ so, by Theorem 5.1 and Remark 5.2 (v) of \cite{BBPS}, 
\begin{align*}%\label{de3.3}
\lim_{k\to \infty} \Var_{Q_k} \left(\sum_{x\in K^j_n} M^{k,x}_0\right) = |\prt K^j_n|/(2d) = n_\beta^{d-1}.
\end{align*}
We will assume from now on that $k$ and $\ell$ are so large that  
\begin{align}\label{de3.3}
 \Var_{Q_k} \left(\sum_{x\in K^j_n} M^{k,x}_0\right) \leq  2 n_\beta^{d-1},
\qquad
 \Var_{Q_\ell} \left(\sum_{x\in K^j_n} M^{\ell,x}_0\right) \leq  2 n_\beta^{d-1}.
\end{align}
By \eqref{de3.2}-\eqref{de3.3} and H\"older's inequality,
\begin{align}\label{n10.1}
\E_{Q_k} \left|\sum_{x\in K^j_n} M^{k,x}_0 - \sum_{x\in K^j_n} M^{\ell,x}_0 \right|
&\leq 
\E_{Q_k} \left|\sum_{x\in K^j_n} M^{k,x}_0 - n_\beta^d \right| 
+ \E_{Q_k} \left|\sum_{x\in K^j_n} M^{\ell,x}_0 - n_\beta^d \right| \\
&\leq \sqrt{2} n_\beta^{(d-1)/2} + \sqrt{2} n_\beta^{(d-1)/2}
= 2 \sqrt{2} n_\beta^{(d-1)/2}. \nonumber
\end{align}

Fix $K^j_n$
and suppose that $\sum_{x\in K^j_n} M^{k,x}_0 \leq \sum_{x\in K^j_n} M^{\ell,x}_0$. Then let 
\begin{align}\label{n3.8}
a (k,\ell,j,n) &= \frac{\sum_{x\in K^j_n} M^{k,x}_0 }{ \sum_{x\in K^j_n} M^{\ell,x}_0} \leq 1,\\
M^{*,k,x}_0 &=  M^{k,x}_0, \qquad x\in K^j_n,\label{n3.9} \\
M^{*,\ell,x}_0 &= a (k,\ell,j,n) M^{\ell,x}_0, \qquad x\in K^j_n, \label{n3.10}
\\
\label{ap21.1}
\Lambda^j_n &=
\sum_{x\in K^j_n} M^{*,k,x}_0 = \sum_{x\in K^j_n} M^{*,\ell,x}_0.
\end{align}
If $\sum_{x\in K^j_n} M^{k,x}_0 \geq \sum_{x\in K^j_n} M^{\ell,x}_0$ then we interchange the roles of $k$ and $\ell$ in the definitions \eqref{n3.8}-\eqref{n3.10} so that \eqref{ap21.1} still holds.

We obtain from \eqref{ap21.1},
\begin{align*}%\label{ap21.2}
\sum_{x\in K_n} M^{*,k,x}_0 = \sum_{x\in K_n} M^{*,\ell,x}_0.
\end{align*}
It follows from \eqref{n10.1} that
\begin{align}\label{n10.2}
\E&\left(
\left(\sum_{x\in K_n} M^{k,x}_0 - \sum_{x\in K_n} M^{*,k,x}_0\right)
+
\left(\sum_{x\in K_n} M^{\ell,x}_0 - \sum_{x\in K_n} M^{*,\ell,x}_0\right)
\right)\\
&\leq 4 \sqrt{2} n_\beta^{(d-1)/2} c_4 n^{d(1-\beta)}
\leq c_5 n^{d - (d+1)\beta/2}. \nonumber
\end{align}

\medskip

{\it Step 3}. 
First we will choose values of parameters used in this step.
Recall that we have fixed a $\beta\in(0,1)$. We now fix  $\delta_1>0$ so small that $6\delta_1 < c_3/2$, where $c_3$ is the constant in \eqref{n5.5}. Then we
choose $a_0,m_1, \delta $ and $\lambda$ corresponding to $\beta$ and $\delta_1$ as in Lemma \ref{n14.1}. Consider $a\geq a_0$, $m\geq m_1$ and let $n_\beta$ be  such that $d n_\beta < a^{\lambda^m} \leq 2 d n_\beta$. Recall $c_5$ from \eqref{n10.2}.
We make $n$ and $m$  larger, if necessary, so that
\begin{align}\label{au7.1}
 &n^d - 2 d  n^{d-1}  ( (2d) ^{(1/\beta)(1-\delta)} n^{1-\delta} + n^\beta)
> |K_n| (1-\delta_1), \\
\label{au7.2}
& c_5 n^{d - (d+1)\beta/2} \leq \delta_1 n^d.
\end{align}
Assume that $k_0$ and $\ell_0$ are so large that for all $k\geq k_0$ and $\ell \geq \ell_0$, we have
$\dist(K_n, (K'_k)^c) \geq a^{(1+\lambda+ 3\delta)\lambda^{m}/2} $
and
$\dist(K_n, (K'_\ell)^c) \geq a^{(1+\lambda+ 3\delta)\lambda^{m}/2} $.

Suppose that $\{N^x_t, t\in \R\}$, $x\in \Z^d$, are independent Poisson processes.
Consider $k,\ell \geq 2$.
Recall that the stationary distribution $Q_k$ was extended from $K'_k $ (identified with $ \calC_k^d$) to $\Z^d$ in a periodic way.
Suppose that $\calM^k_0$ has the distribution $Q_k$ and similarly
we assume that $\calM^\ell_0$ has the distribution $Q_\ell$.
We do not need any assumptions about the relationship of 
$\calM^k_0$ and $\calM^\ell_0$ but, for definiteness, we assume that $\calM^k_0$, $\calM^\ell_0$ and $\{N^x_t, t\in \R\}$, $x\in \Z^d$, are independent.

Recall definitions \eqref{n3.8}-\eqref{ap21.1}. 
Let $\mu^{j,k}$ and $\mu^{j,\ell}$ be (random) probability measures on $K^j_n$ defined by
\begin{align*}%\label{de4.5}
\mu^{j,k}(x) = {M^{*,k,x}_0}/{ \Lambda^j_n }, \qquad
\mu^{j,\ell}(x) = {M^{*,\ell,x}_0}/{ \Lambda^j_n },
\qquad x \in K^j_n.
\end{align*}

Let $Z_t$ and $\wt Z_t$ be a coupling of two continuous time nearest neighbor random walks constructed as in Lemma \ref{n14.1}, with the following initial distributions, 
\begin{align*}
\P(Z_0 = x ) = \mu^{j,k}(x), \qquad
\P(\wt Z_0 = x ) = \mu^{j,\ell}(x), \qquad x\in K^j_n.
\end{align*}
The joint distribution of $Z_0$ and $\wt Z_0$ is irrelevant to our argument but for the sake of definiteness we assume that these random variables are independent given $\calM^k_0$ and $\calM^\ell_0$.

We will now define processes $X$ and $\wt X$ closely related to $Z$ and $\wt Z$. 
We will consider a new family of Poisson processes.
For each $t\geq 0$ and $x\in K'_k$, let $\wh N^{k,x}_t = N^x_t $.
For each $t\geq 0$ and $x\in \Z^d$, let $\wh N^{k,x}_t =\wh N^{k,y}_t $, where $y\in K'_k$ is the unique point such that $x-y = k v$ for some $v\in \Z^d$.
Let
$\wh N^{\ell,x}_t =  N^x_t$ for $ t\geq 0$ and $x\in K'_\ell$.
For each $t\geq 0$ and $x\in \Z^d$, let $ N^{\ell,x}_t = N^{\ell,y}_t $, where $y\in K'_\ell$ and $x-y = \ell v$ for some $v\in \Z^d$.

Recall that we can identify the torus $\calC_k^d$ with  $K'_k$.
We assume that $X_0=Z_0\in K_n \subset \calC_k^d$ and $\wt X_0 =\wt Z_0\in K_n \subset \calC_\ell^d$.
We  define $X$ as a continuous time random walk on the torus $\calC_k^d$ with jump times defined by $\{\wh N^{k,x}_t, t\in \R\}$, $x\in \calC_k^d$.
Let $S^Z_j$ be the time of the $j$-th jump of $Z$ and let 
$S^X_j$ be the time of the $j$-th jump of $X$. We require that $X_{S^X_j} - X_{S^X_j-} =  Z_{S^Z_j} - Z_{S^Z_j-}$ for all $j$, where addition is in the sense of operation on the torus for $X$ and operation on $\Z^d$  for $Z$. The  formula defining the directions of the jumps of $X$ is informal but its meaning should be unambiguous to the reader. The conditions listed above define $X$ uniquely.

The definition of $\wt X$ is analogous, relative to $\calC_\ell^d$ and $\{\wh N^{\ell,x}_t, t\in \R\}$, $x\in \calC_\ell^d$.

 The definitions of $k_0$ and $\ell_0$ at the beginning of this step and the definition of $T_*$ in \eqref{de4.6} show that $X$ and $\wt X$ have the same trajectories as $Z$ and $\wt Z$, up to time $T_*$. 
Recall that $d n_\beta < a^{\lambda^m} \leq 2 d n_\beta$. Then $|X _0 - \wt X_0| \leq  a^{\lambda^m }$ 
and Lemma \ref{n14.1} implies that, 
\begin{align}\label{n10.20}
\P (T(X- \wt X, \bzero) < T_* ) > 1-\delta_1.
\end{align}

Since $a^{\lambda^{m}} \leq  2d n_\beta$, and in view of \eqref{de1.2}, 
\begin{align}\label{n3.7}
a^{(1+\lambda+ 3\delta)\lambda^{m}/2} 
&=
(a^{\lambda^{m}})^{(1+\lambda+ 3\delta)/2}
\leq  (2d n_\beta)^{(1+\lambda+ 3\delta)/2}\\
&\leq  (2d n_\beta)^{(1/\beta)(1-\delta)}
\leq (2d) ^{(1/\beta)(1-\delta)} n^{1-\delta}. \nonumber
\end{align}

Recall $\calJ$ from Step 2.
Let $\calA$ be the family of all $j \in \calJ$ such that 
$\dist(K^j_n, K_n^c) \geq a^{(1+\lambda+ 3\delta)\lambda^{m}/2}$. 
We will estimate the volume of $K^*_n := \bigcup_{j \in \calA} K^j_n$.
In view of \eqref{au7.1} and \eqref{n3.7},
\begin{align}\nonumber
|K^*_n| &= \left| \bigcup_{j \in \calA} K^j_n \right|
= |K_n| - \left| \bigcup_{j \in \calJ\setminus \calA} K^j_n \right|
\geq n^d - 2 d  n^{d-1}  ( a^{(1+\lambda+ 3\delta)\lambda^{m}/2} + n_\beta)\\
&\geq n^d - 2 d  n^{d-1}  ( (2d) ^{(1/\beta)(1-\delta)} n^{1-\delta} + n^\beta)
> |K_n| (1-\delta_1). \label{n10.3}
\end{align}

Let $\left\{(M^{\bt,k,x}_t)_{x\in \calC_k^d}, t\geq 0\right\} $ be the meteor process with the initial 
distribution defined by $ M^{\bt,k,x}_0 = M^{*,k,x}_0 $ if $x\in K_n$. For all other $x \in \calC_k^d \setminus K_n$, we let $ M^{\bt,k,x}_0 = 0$. The
jump times of $\calM^{1,k,x}$ are defined by $\{\wh N^{k,x}_t, t\in \R\}$, $x\in \calC_k^d$, in the usual way.
The process $\left\{(M^{\bt,\ell,x}_t)_{x\in \calC_\ell^d}, t\geq 0\right\} $ is defined in an analogous way relative to the family $\{\wh N^{\ell,x}_t, t\in \R\}$, $x\in \calC_\ell^d$, of Poisson processes, with the initial distribution $ M^{\bt,\ell,x}_0 = M^{*,\ell,x}_0 $ for $x\in K_n$.

Recall from Lemma \ref{n14.1} that $ t_* = 2 a^{(1+\lambda+ 2\delta)\lambda^{m}}$.
Let $\calG_0$ be the $\sigma$-field generated by $\calM^{\bt,k}_0$ and $\calM^{\bt,\ell}_0$. Let $\calF_*$ be the $\sigma$-field generated by $\calM^{\bt,k}_0$, $\calM^{\bt,\ell}_0$,
$\{\wh N^{k,x}_t, 0\leq t\leq t_*\}$, $x\in \calC_k^d$, 
and $\{\wh N^{\ell,x}_t, 0\leq t\leq t_*\}$, $x\in \calC_\ell^d$.
It is easy to see that, a.s.,
\begin{align*}%\label{n11.1}
M^{\bt,k,x}_{t_*} = \sum_{j\in \calJ} \Lambda^j_n
\P_{\mu^{j,k}}(X_{t_*} =x \mid \calF_* ),
\qquad M^{\bt,\ell,x}_{t_*} = 
\sum_{j\in \calJ} \Lambda^j_n \P_{\mu^{j,\ell}}(\wt X_{t_*} =x \mid \calF_* ).
\end{align*}
This implies that, a.s., 
\begin{align*}
\sum_{x\in K_n} |M^{\bt,k,x}_{t_*} - M^{\bt,\ell,x}_{t_*}|
\leq 
\sum_{j\in \calJ} \Lambda^j_n \P(X_{t_*} \ne \wt X_{t_*} \mid \calF_*  ).
\end{align*}
By \eqref{n10.20},
\begin{align*}
\E&\sum_{x\in K_n} |M^{\bt,k,x}_{t_*} - M^{\bt,\ell,x}_{t_*}|
= \E \E\left(\sum_{x\in K_n} |M^{\bt,k,x}_{t_*} - M^{\bt,\ell,x}_{t_*}|
\mid \calF_* \right)\\
&\leq 
\E \E\left(
\sum_{j\in \calJ}\Lambda^j_n
\bone(X^{j,i}_{t_*} \ne \wt X^{j,i}_{t_*} )
\mid \calF_* \right)
=
\E \E\left(
\sum_{j\in \calJ}\Lambda^j_n
\bone(X^{j,i}_{t_*} \ne \wt X^{j,i}_{t_*} )
\mid \calG_0 \right)\\
&\leq \delta _1 \E \sum_{j\in \calJ} \Lambda^j_n  .
\end{align*}
Since
\begin{align*}
\E \Lambda^j_n = 
\E_{Q_k} \sum_{x\in K^j_n} M^{*,k,x}_0 \leq
\E_{Q_k} \sum_{x\in K^j_n} M^{k,x}_0 = |K^j_n| = n_\beta^d,
\end{align*}
it follows that 
\begin{align}\label{n10.5}
\E\sum_{x\in K_n} |M^{\bt,k,x}_{t_*} - M^{\bt,\ell,x}_{t_*}|
\leq   \delta_1 |\calJ| n_\beta^d =  \delta_1 (n/n_\beta)^d n_\beta^d
= \delta_1 n^d.
\end{align}

Let $\left\{(M^{2,k,x}_t)_{x\in \calC_k^d}, t\geq 0\right\} $ be the meteor process  with the initial 
distribution defined by $ M^{2,k,x}_0 = M^{k,x}_0 - M^{1,k,x}_0  $ if $x\in K_n$. For all other $x \in \calC_k^d \setminus K_n$, we let $ M^{2,k,x}_0 = 0$. The
jump times of $\calM^{2,k,x}$ are defined by $\{\wh N^{k,x}_t, t\in \R\}$, $x\in \calC_k^d$, in the usual way.
The process $\left\{(M^{2,\ell,x}_t)_{x\in \calC_\ell^d}, t\geq 0\right\} $ is defined in an analogous way relative to the family $\{\wh N^{\ell,x}_t, t\in \R\}$, $x\in \calC_\ell^d$, of Poisson processes.
It follows from \eqref{n10.2} and \eqref{au7.2} that
\begin{align}\label{n10.6}
&\E\sum_{x\in K_n} |M^{2,k,x}_{t_*} - M^{2,\ell,x}_{t_*}|
\leq
\E\sum_{x\in K_n} M^{2,k,x}_{t_*} 
+ \E\sum_{x\in K_n}  M^{2,\ell,x}_{t_*}\\
&\leq
\E\sum_{x\in \calC_k^d} M^{2,k,x}_{t_*} 
+ \E\sum_{x\in \calC_\ell^d}  M^{2,\ell,x}_{t_*} \nonumber \\
&=
\E\sum_{x\in \calC_k^d} M^{2,k,x}_{0} 
+ \E\sum_{x\in \calC_\ell^d}  M^{2,\ell,x}_{0} \nonumber \\
&= \E\left(
\left(\sum_{x\in K_n} M^{k,x}_{0} - \sum_{x\in K_n} M^{1,k,x}_{0}\right)
+
\left(\sum_{x\in K_n} M^{\ell,x}_{0} - \sum_{x\in K_n} M^{1,\ell,x}_{0}\right)
\right)\nonumber \\
&= \E\left(
\left(\sum_{x\in  K_n} M^{k,x}_{0} - \sum_{x\in  K_n} M^{*,k,x}_{0}\right)
+
\left(\sum_{x\in  K_n} M^{\ell,x}_{0} - \sum_{x\in  K_n} M^{*,\ell,x}_{0}\right)
\right)\nonumber \\
&\leq  c_5 n^{d - (d+1)\beta/2} \leq \delta_1 n^d.\nonumber
\end{align}

Let $\left\{(M^{3,k,x}_t)_{x\in \calC_k^d}, t\geq 0\right\} $ be the meteor process  with the initial 
distribution defined by $ M^{3,k,x}_0 = M^{k,x}_0   $ if $x\in \calC_k^d \setminus K_n$. For  $x \in  K_n$, we let $ M^{3,k,x}_0 = 0$. The
jump times of $\calM^{3,k,x}$ are defined by $\{\wh N^{k,x}_t, t\in \R\}$, $x\in \calC_k^d$, in the usual way.
The process $\left\{(M^{3,\ell,x}_t)_{x\in \calC_\ell^d}, t\geq 0\right\} $ is defined in an analogous way relative to the family $\{\wh N^{\ell,x}_t, t\in \R\}$, $x\in \calC_\ell^d$, of Poisson processes.
Note that for all $x \in K_n$ and $t\geq 0$,
\begin{align}\label{n10.7}
M^{k,x}_t = M^{1,k,x}_t + M^{2,k,x}_t + M^{3,k,x}_t,
\end{align}
and the analogous formula holds for $M^{\ell,x}_t$.
We have by \eqref{n10.3},
\begin{align}\nonumber
\E\sum_{x\in K_n \setminus K^*_n} & |M^{3,k,x}_{t_*} - M^{3,\ell,x}_{t_*}|
\leq  \E\sum_{x\in K_n \setminus K^*_n} (M^{3,k,x}_{t_*} 
+   M^{3,\ell,x}_{t_*})
\leq  \E\sum_{x\in  K_n \setminus K^*_n} (M^{k,x}_{t_*} 
+  M^{\ell,x}_{t_*})\\
&= \sum_{x\in  K_n \setminus K^*_n} (\E M^{k,x}_{t_*} 
+ \E M^{\ell,x}_{t_*})
= 2 |K_n \setminus K^*_n| < 2 \delta_1 |K_n| 
= 2 \delta_1 n^d. \label{n10.8}
\end{align}

Since 
$\E M^{3,k,x}_0 =  \bone_{K'_k \setminus K_n} (x)$,
one can easily show that,
\begin{align}\label{de5.1}
\E M^{3,k,x}_{t_*} 
&= \sum_{y \in K'_k \setminus K_n} \P(X_{t_*} =x  \mid X_0 =y)\\
&= \sum_{y \in K'_k \setminus K_n} \P(X_{t_*} =y  \mid X_0 =x)
= \P(X_{t_*} \in K'_k \setminus K_n \mid X_0 = x).\nonumber
\end{align}
Recall that the Hausdorff distance between $K^*_n$ and $\Z^d \setminus K_n$ is greater than $a^{(1+\lambda+ 3\delta)\lambda^{m}/2}$. We also recall that $X$ has the same trajectory as $Z$ up to time $T_*$. These observations, \eqref{de5.1} and Lemma \ref{n14.1} (i) imply that for $x\in K^*_n$, $\E M^{3,k,x}_{t_*}  \leq \delta_1$. For the same reason, $\E M^{3,\ell,x}_{t_*}  \leq \delta_1$. It follows that 
\begin{align}\nonumber
\E\sum_{x\in  K^*_n} |M^{3,k,x}_{t_*} - M^{3,\ell,x}_{t_*}|
&\leq  \E\sum_{x\in  K^*_n} (M^{3,k,x}_{t_*} 
+   M^{3,\ell,x}_{t_*})
= \sum_{x\in  K^*_n} (\E M^{3,k,x}_{t_*} 
+  \E M^{3,\ell,x}_{t_*})
\\
&\leq 2 \delta_1 | K^*_n|  < 2 \delta_1 |K_n| = 2 \delta_1 n^d. \label{n10.9}
\end{align}

Recall that we have chosen $\delta_1>0$ so that $6\delta_1 < c_3/2$.
In view of \eqref{n10.7}, the estimates \eqref{n10.5}, \eqref{n10.6}, \eqref{n10.8} and \eqref{n10.9} imply that, 
\begin{align*}
\E\sum_{x\in  K_n} |M^{k,x}_{t_*} - M^{\ell,x}_{t_*}|
&\leq  6  \delta_1 n^d < (c_3/2)\delta_1 n^d. 
\end{align*}
This contradicts \eqref{n5.5}  so the proof of part (i) of the theorem is complete.

\medskip

{\it Step 4}.
We will now prove part (ii) of the theorem. 
This is the second time in this paper that we will apply the method of graphical construction of \cite{Har}. Since $Q_k$ converge to $Q_\infty$, we can construct random vectors $\wt \calM^k_0$, $k\geq 1$, and $\wt \calM_0$ on the same space, such that $\wt \calM^k_0$ has distribution $Q_k$ for each $k\geq 1$, $\wt \calM_0$
has distribution $Q_\infty$, and $\wt\calM^k_0 \to \wt\calM_0$, a.s. Let $\{N^x_t, t\in \R\}$, $x\in \Z^d$, be independent Poisson processes, also independent of $\wt\calM^k_0$, $k\geq 1$, and $\wt\calM_0$. For each $k$, let $\{\wt\calM^k_t, t\geq 0\}$ be the meteor process with jumps determined by $\{N^x_t, t\in \R\}$, $x\in \Z^d$, and the initial value $\wt\calM^k_0$. Similarly,  let $\{\wt\calM_t, t\geq 0\}$ be the meteor process with jumps determined by $\{N^x_t, t\in \R\}$, $x\in \Z^d$, and the initial value $\wt\calM_0$.

Write $\wt\calM^k_t = (\wt M^{k,x}_t) _{x\in\Z^d}$ and $\wt\calM_t = (\wt M^{x}_t) _{x\in\Z^d}$.
The proof of Proposition \ref{de22.1} shows that, a.s.,
for a family of trajectories of $\{N^x_t, t\in \R\}$, $x\in \Z^d$, for $y \in \Z^d$ and $s_1>0$ there exists  a finite set $A\subset \Z^d$ such that the values of  $\wt M^{k,y}_s$, $k\geq 1$, and $\wt M^y_s$, $s\in[0,s_1]$, are uniquely determined by the values of $\wt M^{k,z}_0$, $k\geq 1$, and $\wt M^z_0$ for $z \in A$. Moreover, for each $k\geq 1$ and $s\in [0,s_1]$, the value of $\wt M^{k,y}_s$ is a continuous function of $\wt M^{k,z}_0$, $z\in A$. A similar remark applies to $\wt M^y_s$. This and the fact that $\wt M^{k,z}_0 \to \wt M^z_0$ a.s., when $k\to \infty$, for each $z\in A$, imply that the processes $\{\wt M^{k,y}_s, s\in[0,s_1]\}$ converge in the Skorokhod topology to   $\{\wt M^{y}_s, s\in[0,s_1]\}$ as $k\to\infty$. Since this holds for all $y\in \Z^d$, $s_1>0$ and almost all trajectories of $\{N^x_t, t\in \R\}$, $x\in \Z^d$, we conclude that $\{\wt\calM^k_t, t\geq 0\}$ converge in the Skorokhod topology to  $\{\wt\calM_t, t\geq 0\}$, a.s.

Recall the definition of $\{\calM^k_t, t\geq 0\}$ from the statement of the theorem.
Fix any $K'_n$, $t_1>0$ and $p_1 >0$. Then there exists $k_1$ so large that for $k\geq k_1$, with probability greater than $1-p_1$, there is no trajectory of any continuous time random walk with jumps determined  by $\{N^x_t, t\in \R\}$, $x\in \Z^d$, with starting point outside $K'_k$ and visiting $K'_n$ at some time in the interval $[0, t_1]$. This implies that we could construct $\{\wt\calM^k_t, t\geq 0\}$ and $\{\calM^k_t, t\geq 0\}$ on the same probability space so that with probability greater than $1-p_1$, 
$\wt M^{k,y}_t =  M^{k,y}_t$ for all $t\in[0,t_1]$ and $y \in K'_n$.
This and the fact that $\{\wt\calM^k_t, t\geq 0\}$ converge in the Skorokhod topology to  $\{\wt\calM_t, t\geq 0\}$, a.s., easily imply that $\{\calM^k_t, t\geq 0\}$ converge to  $\{\wt\calM_t, t\geq 0\}$ weakly in the Skorokhod topology.
Finally, note that the process that we call $\{\wt\calM_t, t\geq 0\}$ in this proof is the same as the process called $\{\calM^\infty_t, t\geq 0\}$ in the theorem.
\end{proof}

\begin{remark}\label{de23.2}
Suppose that $\{\calM_t, t\geq 0\}$ is the meteor process on $\Z^d$ with $\calM_0$ distributed as $Q_\infty$ defined in Theorem \ref{n13.1}. Then for every constant $c\in(0,\infty)$, the process $\{c\calM_t, t\geq 0\}$ is stationary. 
For later reference, we call the distribution of this process $Q^{(c)}_\infty$.

According to Theorem \ref{n14.6} below, the mean amount of mass at a vertex for the process $c\calM$ is equal to $c$. From the purely formal point of view, this shows that there are infinitely many stationary distributions for the meteor process on $\Z^d$ but this is a rather uninteresting observation.

Let $u(t,x) = \E M^x_t$ for $t\geq 0$ and $x\in \Z^d$. It is easy to see that $u(t,x)$ satisfies the heat equation on $\Z^d$. If the process $\calM$ is in the stationary regime then $u(t,x)$ does not depend on $t$ so $x\to u(t,x)$ is harmonic. A non-negative harmonic function on $\Z^d$ is constant. There exist many stationary meteor processes on $\Z^d$ with $u(t,x) \equiv 1$. One can construct them as in Remark \ref{de23.1} (i) below, as mixtures of processes with distributions $Q^{(c)}_\infty$.
It would be interesting to know whether there exist any  stationary distributions with the average mass 1 at a vertex which are not mixtures of distributions $Q^{(c)}_\infty$.

\end{remark}

\begin{theorem}\label{n14.6}
Let $Q_\infty$ be defined as in Theorem \ref{n13.1}.
Suppose that $d\geq 1$ and let
$\{\calM_t, t\geq 0\}  $ be
the meteor process  under the stationary measure $Q_\infty$.  We have
\begin{align}
&\E_{Q_\infty} M^x_0 = 1, \qquad x\in \Z^d, \label{n19.2} \\
&  \Var _{Q_\infty} M^x_0 = 1,  \qquad x\in \Z^d,  \label{n19.3}\\
& \Cov _{Q_\infty}( M^x_0, M^y_0) = -\frac 1{2d} ,\qquad x\lra y,\label{n19.4}\\
& \Cov _{Q_\infty}( M^x_0, M^y_0) = 0 , \qquad x\ne y \text{  and  } x\not\lra y.  \label{n19.5}
\end{align}
\end{theorem}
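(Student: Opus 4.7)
My plan is to obtain all four identities by passing to the limit $k\to\infty$ in the corresponding moment formulas for the stationary distributions $Q_k$ on the tori $\calC_k^d$, using the weak convergence $Q_k\to Q_\infty$ from Theorem \ref{n13.1}(i). The essential finite-graph input is the variance formula for sums $\sum_{x\in A}M^{k,x}_0$ over finite subsets $A\subset \Z^d$, which combined with the polarization identity $\Var(X+Y)=\Var X+\Var Y+2\Cov(X,Y)$ for two-point sets recovers all four claims.

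For \eqref{n19.2} I use that the torus $\calC_k^d$ is vertex-transitive with total mass $k^d$, so by symmetry and stationarity $\E_{Q_k}M^{k,x}_0=1$ for every $x$ and $k$; the bound $\Var_{Q_k}(M^{k,x}_0)\le 2$ appearing in \eqref{de3.3} (specialized to $A=\{x\}$) makes $\{M^{k,x}_0\}_k$ uniformly integrable, so in the limit $\E_{Q_\infty}M^x_0=1$. Next, Theorem 5.1 and Remark 5.2(v) of \cite{BBPS}, applied to an arbitrary finite $A\subset \Z^d$ viewed inside $\calC_k^d$ for $k$ large, give
\begin{align*}
\lim_{k\to\infty}\Var_{Q_k}\left(\sum_{x\in A}M^{k,x}_0\right)=\frac{|\partial A|}{2d},
\end{align*}
where $\partial A$ denotes the edge boundary of $A$ in $\Z^d$. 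Once the passage to the limit under $Q_\infty$ is justified (see below), this yields $\Var_{Q_\infty}(\sum_{x\in A}M^x_0)=|\partial A|/(2d)$ for every finite $A$, and three choices of $A$ finish the proof. Taking $A=\{x\}$ gives $|\partial A|=2d$ and hence \eqref{n19.3}. For $A=\{x,y\}$ with $x\lra y$, the edge $\{x,y\}$ is internal, so $|\partial A|=4d-2$; expanding $\Var(M^x_0+M^y_0)=2\Var(M^x_0)+2\Cov(M^x_0,M^y_0)$ gives $2+2\Cov=2-1/d$, i.e.\ $\Cov(M^x_0,M^y_0)=-1/(2d)$, which is \eqref{n19.4}. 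For $A=\{x,y\}$ with $x\ne y$ and $x\nlra y$, no edge is internal, so $|\partial A|=4d$, and the same expansion forces $\Cov(M^x_0,M^y_0)=0$, which is \eqref{n19.5}.

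The main obstacle is the passage to the limit in the second-moment identity. Weak convergence $Q_k\to Q_\infty$ together with the uniform variance bound \eqref{de3.3} gives uniform integrability of $\sum_{x\in A}M^{k,x}_0$ (so the mean converges) and, via Fatou's lemma, only the one-sided inequality $\Var_{Q_\infty}(\sum_{x\in A}M^x_0)\le|\partial A|/(2d)$. To secure the matching lower bound I would either extract a uniform-in-$k$ $L^{2+\eps}$ bound on $\sum_{x\in A}M^{k,x}_0$ from the graphical construction in Proposition \ref{de22.1}---the subcritical percolation domination already in that proof limits how much mass can concentrate on a finite set, and hence upgrades tightness of second moments to uniform integrability of the squares---or, having first secured $\E_{Q_\infty}(M^x_0)^2<\infty$ by Fatou, use translation invariance of $Q_\infty$ together with the stationarity identity $(d/dt)\E_{Q_\infty}(\sum_{x\in A}M^x_t)^2|_{t=0}=0$ to compute the variance directly from the infinitesimal generator of the meteor process on $\Z^d$, arriving at the same value $|\partial A|/(2d)$.
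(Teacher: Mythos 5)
Your reduction to the torus formulas and the polarization step are fine, and you correctly identify the real obstacle: weak convergence $Q_k\to Q_\infty$ plus the uniform variance bound only gives uniform integrability of the first powers and, via Fatou, the one-sided inequality for the variance. What is missing is uniform integrability of the \emph{squares} $(M^{k,x}_0)^2$ under $Q_k$, and neither of your two proposed fixes supplies it. Route 1 does not work as stated: the graphical construction of Proposition \ref{de22.1} concerns existence and uniqueness of the dynamics for a given initial configuration by bounding how far influence propagates in a finite time window; it gives no control on the moments of the stationary measures $Q_k$ themselves, since under $Q_k$ the mass at a vertex is built up from contributions arriving from arbitrarily far away over an unbounded past, and no uniform $L^{2+\eps}$ bound follows from the percolation domination. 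Route 2 fails for a structural reason: the stationarity identity $(d/dt)\E(\sum_{x\in A}M^x_t)^2|_{t=0}=0$, together with translation invariance and mean $1$, does \emph{not} determine the second moments. Indeed, mixtures such as $\tfrac12 Q^{(0)}_\infty+\tfrac12 Q^{(2)}_\infty$ (see Remarks \ref{de23.2} and \ref{de23.1}(i)) are translation-invariant stationary laws with $\E M^x_0=1$ but variance $3$ and non-decaying covariances, and they satisfy exactly the same generator identities; so the infinitesimal equations admit many second-moment profiles and cannot single out $|\partial A|/(2d)$ without an additional ergodicity or correlation-decay input that you have not established.

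The paper closes this gap differently: it proves a uniform third-moment bound, $\limsup_k\E_{Q_k}(M^x_0)^3<\infty$, which yields uniform integrability of the squares and hence convergence of all the quantities in \eqref{n23.2}--\eqref{n23.5}. That bound is itself nontrivial: one writes $\E_{Q_k}(M^x_0)^3=k^2\P_{Q_k}(Z^1=Z^2=Z^3)$ using three WIMPs started from the mass distribution, and then bounds the stationary probability of a triple coincidence on $\calC_n^{3d}$ by comparing the true stationary weights with an explicit near-equilibrium function $\pi'$ (satisfying the balance equations \eqref{n23.10}--\eqref{n23.14}, checked by computer algebra) via a minimum-principle argument. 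Some argument of comparable strength is needed; as written, your proof has a genuine gap at precisely this point.
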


\begin{proof}
The following has been proved in Theorem 5.1 of \cite{BBPS}.
Suppose that $d\geq 1$ and let
$\{\calM_t, t\geq 0\} = \{(M^1_t,M^2_t, \dots, M^{k}_t), t\geq 0\} $ be
the meteor process on  $G=\calC_n^d$ (the product of $d$ copies of the cycle $\calC_n$), under the stationary measure $Q_k$ (here $k=n^d$). Let $V$ denote the vertex set of $\calC_n^d$ and assume that $\sum_{x\in V} M^x_0 = k$ under $Q_k$. Then 
\begin{align}
&\E_{Q_k} M^x_0 = 1, \qquad x\in V, \label{n23.2} \\
& \lim_{k\to \infty} \Var _{Q_k} M^x_0 = 1,  \qquad x\in V,  \label{n23.3}\\
&\lim_{k\to \infty} \Cov _{Q_k}( M^x_0, M^y_0) = -\frac 1{2d} ,\qquad x\lra y,\label{n23.4}\\
&\lim_{k\to \infty} \Cov _{Q_k}( M^x_0, M^y_0) = 0 , \qquad x\ne y \text{  and  } x\not\lra y.  \label{n23.5}
\end{align}

In view of Theorem \ref{n13.1} (i), formulas \eqref{n19.2}-\eqref{n19.5} follow from \eqref{n23.2}-\eqref{n23.5} provided $M^x_0$ and $(M^x_0)^2$ are uniformly integrable under $Q_k$, $k\geq 1$.

Uniform integrability of $M^x_0$ under $Q_k$, $k\geq 1$, follows from \eqref{n23.3}. It remains to prove uniform integrability of $(M^x_0)^2$ under $Q_k$, $k\geq 1$. It will suffice to show that 
\begin{align}
\limsup_{k\to \infty} \E _{Q_k} (M^x_0)^3 <\infty,  \qquad x\in V.  \label{n23.6}
\end{align}

We will define some subsets of the sets of vertices of $\calC_n^d$ and $\calC_n^{3d}$. We will suppress the dependence on $n$ in this notation.
Let $ \bzero =(0,\dots,0)\in \calC_n^d$. 
Let $\bfa$ be the  set of all vertices $(a_1, \dots , a_d)\in \calC_n^d $ such
that $|a_i| = |a_j| = 1$ for some $i\ne j$, and $a_m = 0$
for all $m\ne i,j$. Let $\bfb$ be the set of all vertices $(b_1,
\dots , b_d) \in \calC_n^d$ such that $|b_i| =2$ for some $i$, and $b_m = 0$
for all $m\ne i$. Let $\bfh$ be the set of all vertices $(h_1,
\dots , h_d)\in \calC_n^d $ such that $|h_i| =1$ for some $i$, and $h_m = 0$
for all $m\ne i$. Let $\bfg = V \setminus (\{\bzero\}\cup\bfa
\cup \bfb \cup \bfh)$.

Let $V_3$ be the set of all vertices of $\calC_n^{3d}$.
We will use the following notation for the elements of $V_3$,
\begin{align*}
x= (x^1, x^2, x^3)=((x^1_1, x^1_2, \dots, x^1_d),(x^2_1, x^2_2, \dots, x^2_d),(x^3_1, x^3_2, \dots, x^3_d)).
\end{align*}
For $y=(y_1, \dots , y_d)\in \calC_n^d$ let $\|y \|_1 = \sum_{1\leq k\leq n} |y_k|$. Let  $V_3^*$ be the set of all $x\in V_3$ such that
\begin{align*}
\max \left(\| x^1 - x^2 \|_1, \| x^2 - x^3 \|_1, \| x^3 - x^1 \|_1 \right)
\geq 5,
\end{align*}
and let $V_3^\circ$ be the interior of $V_3^*$, that is, the set of all vertices in $V_3^*$ which are not connected by an edge with a vertex in $V_3 \setminus V_3^*$.

Let $\bzero' $ be the set of all $x= (x^1, x^2, x^3)\in V_3$ such that $x^j-x^i = \bzero$ for some $1\leq i,j \leq 3$, $j\ne i$. Let $\bfa' $ be the set of all $x= (x^1, x^2, x^3)\in V_3$ such that $x^j-x^i \in \bfa$ for some $1\leq i,j \leq 3$, $j\ne i$. We define $\bfb'$ and $ \bfh'$ in an analogous manner.
Let $\bfg' = V_3 \setminus (\bzero'\cup\bfa'
\cup \bfb' \cup \bfh')$.

We will base our estimates for $\E _{Q_k} (M^x_0)^3 $  on a representation of $M^x_0$ using
WIMPs. Let  $Z^1, Z^2$ and $ Z^3$ be as in Definition \ref{wimps}.
In particular,
$\P (Z^j_0 = x) =  M^x_0/n^d$ for $j=1,2,3$ and $x\in V$.

Since the state space $\calC_n^{3d}$ for the process $(Z^1, Z^2,Z^3)$ is finite, the process has a stationary distribution. The stationary distribution is unique because all states communicate.
We will estimate the probability that $Z^1_t =  Z^2_t=Z^3_t$ under the stationary distribution. 
Let $ \{\pi_x, x\in V_3\}$ be the set of stationary probabilities for the discrete time Markov chain (the skeleton process) embedded in $(Z^1, Z^2,Z^3)$.

Let $\pi'_x = 1$ for $x\in \bzero'$,  $\pi'_x = \frac {6d-3}{8d}$
for $x\in \bfh'$ and $\pi'_x = \frac 34 $ for all other $x\in V_3$.
It was verified using computer algebra (Mathematica) that the function $\pi'$ satisfies the following equations.
\begin{align}\label{n23.10}
\pi'_x  &= \frac {1+2d}{4d} \pi'_x +  \frac 23 \pi'_y ,
\qquad x\in \bzero', y \in \bfh',\\
\pi'_x  &= \frac 1{3d} \pi'_y + \frac{2d -2}{3d} \pi'_z +  \frac 13 \pi'_x,\qquad
x\in \bfh', y \in \bfb', z\in \bfa',\label{n23.11}\\
\pi'_x  &=  \frac 1{4d^2} \pi'_v
+ \frac 2{3d}  \pi'_y + \frac{2d-2}{3d} \pi'_z +  \frac 13 \pi'_x, \qquad
x\in \bfa',v\in \bzero', y \in \bfh', z\in \bfg',\label{n23.12} \\
\pi'_x  &=  \frac 1{8d^2} \pi'_v
+ \frac 1{3d}  \pi'_y + \frac{2d-1}{3d} \pi'_z +  \frac 13 \pi'_x, \qquad
x\in \bfb',v\in \bzero', y \in \bfh', z\in \bfg', \label{n23.13}\\
\pi'_x & = \frac 23 \pi'_y +  \frac 13 \pi'_x, \qquad
x\in \bfg', y \in \bfa' \cup \bfb' \cup \bfg'.\label{n23.14}
\end{align}

We will show that the stationary distribution $\pi$ satisfies equations corresponding to \eqref{n23.10}-\eqref{n23.14} in an appropriate sense that will be made precise below. To save space, we will discuss the counterpart of only one of the above equations, namely \eqref{n23.12}. For any finite set $A$ let $\# A$ denote its cardinality. We will prove that for $x\in \bfa' \cap V_3^\circ$, the stationary distribution must satisfy 
\begin{align}\label{oct21.1}
\pi_x = &  \frac 1{4d^2} \frac{\sum_{v\in \bzero', v\lra x} \pi_v}
{ \# \{v\in \bzero', v\lra x\}}
+ \frac 2{3d}   \frac{\sum_{y\in \bfh', y\lra x} \pi_y}
{ \# \{y\in \bfh', y\lra x\}} \\
&\quad + \frac{2d-2}{3d}  \frac{\sum_{z\in \bfg', z\lra x} \pi_z}
{ \# \{z\in \bfg', z\lra x\}} 
+  \frac 13  \frac{\sum_{u\in \bfa', u\lra x} \pi_u}
{ \# \{u\in \bfa', u\lra x\}}. \nonumber
\end{align}
Consider $x\in \bfa'\cap V_3^\circ$. Suppose without loss of generality that  $x^1 - x^2 \in \bfa$. There exists a unique $v=(v^1,v^2,v^3)\in \bzero'$ which can be a state of $(Z^1, Z^2,Z^3)$ from which the process can jump to $x$. We have $v^1-v^2= \bzero$. When $v_1$ is hit by a meteor then $v^1$ will jump to $x^1$ and $v^2$ will jump to $x^2$ with probability $1/(4d^2)$. With probability $1/(4d^2)$, $v^1$ will jump to $x^2$ and $v^2$ will jump to $x^1$. The sum of the two probabilities is $2/(4d^2)$. We multiply this quantity by $1/2$ because the first meteor hit which moves the process  $(Z^1, Z^2,Z^3)$ to a new location may hit either $v^1$ (equal to $v^2$) or $v^3$, with equal probabilities. Hence, we have the factor $\frac 1{4d^2}$ in the first term on the right hand side of \eqref{oct21.1}.

There exist exactly 4 elements of $\bfh'$ which can be states of $(Z^1, Z^2,Z^3)$ from which the process can jump to $x$. 
This transition requires that the specific process, either $Z^1$ or $Z^2$, jumps, and each process has probability $1/3$ of jumping first.
The probability that the jump will go in the desirable direction is $1/(2d)$. The product of these probabilities and 4 is equal to $2/(3d)$ so this justifies the presence of the factor $\frac 2{3d} $ in the second term on the right hand side of \eqref{oct21.1}.

There exist exactly $4d-4$ elements of $\bfg'$ which can be states of $(Z^1, Z^2,Z^3)$ from which the process can jump to $x$. 
This transition requires that the specific process, either $Z^1$ or $Z^2$, jumps, and each process has probability $1/3$ of jumping first.
The probability that the jump will go in the desirable direction is $1/(2d)$. The product of these probabilities and $4d-4$ is equal to $(2d-2)/(3d)$ so this justifies the presence of the factor $\frac{2d-2}{3d} $ in the third term on the right hand side of \eqref{oct21.1}.

Finally, the process $Z^3$ will jump first with probability $1/3$. There exist exactly $2d$ elements of $\bfa'$ which can be states of $(Z^1, Z^2,Z^3)$ from which the process can jump to $x$.  The probability that $Z^3$ will jump in the desirable direction is $1/(2d)$. We have $ (1/3) (2d) (1/(2d)) = 1/3$ so this explains the factor $ \frac 13 $ in the last term on the right hand side of \eqref{oct21.1}.

If all occurrences of the function $\pi$ are replaced by $\pi'$ in \eqref{oct21.1} then this equation reduces to \eqref{n23.12}. A similar argument applies to the appropriate counterparts of other equations in the system 
\eqref{n23.10}-\eqref{n23.14} so we conclude that $\pi$ and $\pi'$ satisfy the same system of equilibrium equations on $V_3^\circ$. 

Let $q_{x,y}$ denote the one step transition probabilities for the skeleton process (discrete time Markov chain) embedded in $(Z^1, Z^2,Z^3)$.
Let $O=\{(x^1,x^2,x^3) \in V_3: x^1=x^2=x^3\}$. It is easy to see that there exist $p_1>0$ and $k_1$ (independent of $n$) such that for every 
$x\in V_3 \setminus V_3^\circ$ there exist $y \in O$ and a sequence $z_1, z_2, \dots, z_j$ such that $j \leq k_1$, $z_1 = y$, $z_j= x$, $z_m \lra z_{m+1}$ for all $m=1,\dots, j-1$, and 
\begin{align}\label{n24.1}
\prod_{1\leq m \leq j-1} q_{z_m, z_{m+1}} > p_1.
\end{align}

By symmetry, $\pi_x = \pi_y$ for all $x,y \in O$. Let $\pi_o$ denote this common value. If follows from \eqref{n24.1} that $\pi_x \geq p_1 \pi_o$ for all $x\in V_3 \setminus V_3^\circ$.
Since $\pi'_x \leq 1$ for all $x$, we obtain $\pi_x/\pi'_x \geq p_1 \pi_o$ for all $x\in V_3 \setminus V_3^\circ$. 
Let $\alpha = \min_{x\in V_3} \pi_x/\pi'_x$. If this minimum is attained in $V_3^\circ$ then it is easy to check, using the fact that $\pi$ and $\pi'$ satisfy the same equilibrium equations on $V_3^\circ$, that $\pi_x/\pi_x'$ also attains the minimum $\alpha$ on $V_3 \setminus V_3^\circ$, and, therefore, $\min_{x\in V_3} \pi_x/\pi'_x\geq p_1 \pi_o$.
Since $\pi'_x \geq (6d-3)/(8d)$ for all $x$, we obtain $\pi_x  \geq p_1 \pi_o(6d-3)/(8d)$ for $x\in V_3^*$. This, and the previously derived estimate for $x\in V_3 \setminus V_3^\circ$ show that if we let $c_1 = p_1 (6d-3)/(8d)>0$ then for all $x\in V_3$, 
\begin{align}\label{n24.2}
\pi_x  \geq p_1 \pi_o(6d-3)/(8d) = c_1 \pi_o.
\end{align}

Let $ \{\pi^*_x, x\in V_3\}$ be the set of stationary probabilities for the process $(Z^1, Z^2,Z^3)$. The holding time for $(Z^1, Z^2,Z^3)$ has mean 1 for all states in $O$. The mean is $1/2$ or $1/3$ for all other states, depending on whether any two components of $(Z^1, Z^2,Z^3)$ are equal or not. Hence, if  we take $c_2 = c_1/3>0$ then \eqref{n24.2} yields
for  $x\in V_3$, 
\begin{align*}%\label{n24.3}
\pi^*_x  \geq  c_1 \pi_o/3 = c_2 \pi^*_o.
\end{align*}
Recall that $k=n^d$.
Since $\sum _x \pi^*_x = 1$, the last formula implies that
\begin{align}\label{n23.7}
&\limsup_{n\to \infty}k^2 \P_{Q_k} (Z^1= Z^2=Z^3)
\leq c_2^{-1}.
\end{align}

Let $\calG_t = \sigma(\calM_s, 0\leq s \leq t)$.
It is easy to see that, for $x\in V$ and $j=1,2,3$,
\begin{align*}
\P_{Q_k}(Z^j_0 =x \mid \calG_0)
= M^x_0/k.
\end{align*}
The random variables $Z^1_0, Z^2_0$ and $ Z^3_0$
are conditionally independent given $\calG_0$, so
\begin{align*}
\P_{Q_k}(Z^1= Z^2=Z^3=x \mid \calG_0)
= (M^x_0 /k)^3.
\end{align*}
Thus, using invariance of the process $(Z^1, Z^2,Z^3)$
under shifts of $\calC_n^d$,
\begin{align*}
\E_{Q_k} (M^x_0 )^3 &=
k^3 \E_{Q_k} \P_{Q_k}(Z^1= Z^2=Z^3=x \mid \calG_0)
=k^3\P_{Q_k}(Z^1= Z^2=Z^3=x ) \\
&= k^2\P_{Q_k}(Z^1= Z^2=Z^3 ).
\end{align*}
This and \eqref{n23.7} yield 
\begin{align*}
\limsup_{k\to \infty}
\E_{Q_k} (M^x_0 )^3 \leq c_2^{-1}.
\end{align*}
This proves \eqref{n23.6} and thus completes the proof of the theorem.
\end{proof}

\begin{remark}
The derivation of \eqref{n23.2}-\eqref{n23.5} in \cite{BBPS} is based on an explicit solution to a set of equations similar to \eqref{n23.10}-\eqref{n23.14}.
The method breaks down if one wants to generalize Theorem \ref{n14.6} to the third or higher moments because the solution to a similar set of equations needed for a similar argument does not seem to have a tractable form.
\end{remark}

\begin{corollary}\label{n19.1}
We have for all $n\geq 1$,
\begin{align}
&\E_{Q_\infty}\left( \sum_{x\in K_n} M^x_0 \right) = n^d, \nonumber\\
&  \Var _{Q_\infty} \left( \sum_{x\in K_n} M^x_0 \right) = n^{d-1} .\label{n28.1}
\end{align}
\end{corollary}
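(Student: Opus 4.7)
The plan is to deduce this corollary directly from Theorem \ref{n14.6} by linearity of expectation and the standard bilinear expansion of the variance, since the corollary simply aggregates the one-vertex and two-vertex moment formulas over the box $K_n$. The first identity is immediate: by Theorem \ref{n14.6}, $\E_{Q_\infty} M^x_0 = 1$ for every $x \in \Z^d$, so $\E_{Q_\infty}\sum_{x\in K_n} M^x_0 = |K_n| = n^d$.

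For the variance, I would expand
\begin{align*}
\Var_{Q_\infty}\left(\sum_{x\in K_n} M^x_0\right)
= \sum_{x\in K_n} \Var_{Q_\infty}(M^x_0)
+ \sum_{\substack{x,y\in K_n \\ x\ne y}} \Cov_{Q_\infty}(M^x_0, M^y_0).
\end{align*}
By Theorem \ref{n14.6}, each variance term equals $1$, and each covariance term equals $-1/(2d)$ if $x\lra y$ and $0$ otherwise. Hence the expression reduces to
\begin{align*}
n^d - \frac{1}{2d}\cdot\#\{(x,y)\in K_n\times K_n: x\lra y\}.
\end{align*}

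The only remaining task is to count ordered pairs of adjacent vertices inside $K_n = \{1,2,\dots,n\}^d$. In each of the $d$ coordinate directions there are $(n-1)n^{d-1}$ unordered edges with both endpoints in $K_n$, so the total number of unordered edges is $d(n-1)n^{d-1}$ and the number of ordered adjacent pairs is $2d(n-1)n^{d-1}$. Substituting,
\begin{align*}
\Var_{Q_\infty}\left(\sum_{x\in K_n} M^x_0\right)
= n^d - \frac{1}{2d}\cdot 2d(n-1)n^{d-1}
= n^d - (n-1)n^{d-1} = n^{d-1},
\end{align*}
which is \eqref{n28.1}. There is no real obstacle, provided Theorem \ref{n14.6} is already in hand; the whole argument is a direct bookkeeping computation, and the cancellation $n^d - (n-1)n^{d-1} = n^{d-1}$ is what produces the surface-order, rather than volume-order, variance.
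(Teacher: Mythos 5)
Your proposal is correct and follows essentially the same route as the paper: linearity for the mean, the bilinear variance expansion with the moment formulas of Theorem \ref{n14.6}, and an explicit count of the $d(n-1)n^{d-1}$ nearest-neighbor edges in $K_n$ (each counted twice as an ordered pair), yielding $n^d-(n-1)n^{d-1}=n^{d-1}$. No gaps.
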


\begin{proof}
The first formula follows directly from \eqref{n19.2}.

We have from \eqref{n19.3}-\eqref{n19.5},
\begin{align*}
\Var _{Q_\infty} \left( \sum_{x\in K_n} M^x_0 \right) &=
\sum_{x\in K_n} \Var _{Q_\infty} M^x_0
+ \sum_{x,y\in K_n, x\lra y} \Cov _{Q_\infty}( M^x_0, M^y_0)\\
&= \sum_{x\in K_n} 1
- \sum_{x,y\in K_n, x\lra y} \frac 1{2d}.
\end{align*}
Note that the contribution from each edge should be counted twice in the second sum on the right hand side.
Formula \eqref{n28.1} follows by counting the numbers of vertices in $K_n$ and edges connecting neighbors in $K_n$.
\end{proof}

Lemma \ref{n14.1} is a crucial step in the proof of Theorem
\ref{n13.1}. The lemma shows that given a system of independent Poisson
processes, one can construct two random walks with jumps determined
by this family of Poisson processes (the same for both random walks),
and such that the random walks meet after a relatively short time.
The basic idea of the proof is to use the usual mirror coupling
for each coordinate separately. This would be
quite straightforward if we could couple Poisson processes determining
jump times for the two random walks. The fact that a single family of 
Poisson processes is
used causes a problem. Namely, mirror-coupled coordinates
do not meet because they typically arrive at the meeting location
at different times. The time difference can be estimated
and it turns out to be manageable but  the mirror coupling
has to be restarted and the whole procedure requires
multiple induction arguments. On the technical side, it is
worth noting that similar arguments often work on the 
``exponential'' scale, that is, one divides space and/or
time into ``boxes'' of the diameter $2^k$, $k\in\Z$. This does not seem
to work in our case and we have to work with ``doubly
exponential'' scale $2^{2^k}$. More precisely, the argument requires
that the doubly exponential scale is $a^{\lambda^m}$ for
carefully chosen values of $a$ and $\lambda$.

\begin{proof}[Proof of Lemma \ref{n14.1}]

{\it Step 1}. 
Suppose that an i.i.d. family of Poisson processes $N^x$, $x\in \Z^d$, is given. We will construct processes $X$ and $\wt X$ so that 
$X$ ($\wt X$) jumps at a time $t$ if and only if $X_{t-} = v$ (resp., $\wt X_{t-} = v$) and $N^v$ has a jump at time $t$. 

Suppose that $X$ is given and
let $\{Y_j, j\geq0\}$ be the discrete time random walk embedded in $X$.
We will define $\wt Y$, the discrete time random walk embedded in $\wt X$, below. We will write 
\begin{align*}
&X_t = (X^1_t, X^2_t, \dots, X^d_t), \qquad
\wt X_t = (\wt X^1_t, \wt X^2_t, \dots, \wt X^d_t),\\
&Y_j = (Y^1_j, Y^2_j, \dots, Y^d_j),\qquad
\wt Y_j = (\wt Y^1_j, \wt Y^2_j, \dots, \wt Y^d_j).
\end{align*}
For $i=1,\dots, d$, let
\begin{align*}
\wt Y^i_j =
\begin{cases}
- Y^i_j + X_0 + \wt X_0 & \text{for  } 0\leq j \leq \tau:= T(|Y^i_\cdot - \wt Y^i_\cdot|, \{0,1\}),  \\
Y^i_j - Y^i_\tau + \wt Y^i_\tau  & \text {for  } j > \tau .
\end{cases}
\end{align*}

Let $\bar Y^i_j =  |Y^i_j - \wt Y^i_j|$ and note that $\bar Y^i$ is a discrete time lazy symmetric random walk, with step size 2, starting at a non-negative integer and stopped at the hitting time of $\{0,1\}$. ``Lazy'' means here that $\P(\bar Y^i_{j+1}=\bar Y^i_j) =(d-1)/d$.

Fix any $\beta \in \left(0 , 1\right)$ and $\delta_1 >0$.
We will argue that one can  choose $\delta >0$, $a_0>1$ and $\lambda >1$ satisfying  conditions \eqref{n3.4}-\eqref{o29.3}, \eqref{o26.2}-\eqref{o26.6}, \eqref{n3.6}, \eqref{o26.15} and \eqref{o27.3} stated below, for all $a\geq a_0$ and $m\geq 1$. 
Note that conditions \eqref{n3.4}-\eqref{n3.5} are the same as \eqref{de1.1}-\eqref{de1.2}.

We fix $\delta >0$ and $\lambda >1$ satisfying
\begin{align}
\label{n3.4}
(1/\beta) (1-\delta) &> 1,\\
(1+\lambda + 3 \delta)/2 &< (1/\beta) (1-\delta),\label{n3.5} \\
(1+\lambda+ 4\delta)/4 &< 1/\lambda, \label{o29.2} \\
1+\lambda+ 2\delta &>  (1+\lambda + 3 \delta)/2. \label{o29.3}
\end{align}

Let
\begin{align}\label{o26.1}
k_{m,1} = \lceil a^{(1+\lambda+ 2\delta)\lambda^{m}} - a^{(1+\lambda + 3 \delta)\lambda^{m}/2} \rceil,\\
k_{m,2} = \lfloor a^{(1+\lambda+ 2\delta)\lambda^{m}} + a^{(1+\lambda + 3 \delta)\lambda^{m}/2} \rfloor. \label{o26.4}
\end{align}

There exists $1<a_1<\infty$ so large that for all $a\geq a_1$ and $m\geq 1$, we have, in view of \eqref{o29.3},
\begin{align}
k_{m,1} &> a^{(1+\lambda+ \delta)\lambda^{m}}, \label{o26.2}\\
k_{m,2} &< 2 a^{(1+\lambda+ 2\delta)\lambda^{m}}, \label{o26.3}\\
(k_{m_1} - a^{(1+\lambda+ 2\delta)\lambda^{m}})^2
&\geq (1/2) a^{(1+\lambda + 3 \delta)\lambda^{m}} , \label{o26.5}\\
(k_{m_2} - a^{(1+\lambda+ 2\delta)\lambda^{m}})^2
&\geq (1/2) a^{(1+\lambda + 3 \delta)\lambda^{m}} . \label{o26.6}
\end{align}

We can find $a_2 \geq a_1$ such that, because of \eqref{o29.2}, for $a\geq a_2$ and $m\geq 1$,
\begin{align}\label{n3.6}
 1+ a^{(1+\lambda+ 4\delta)\lambda^{m}/4} < a^{\lambda^m/\lambda}/d = a^{\lambda^{m-1}}/d .
\end{align} 

Let
\begin{align*}
A^i_1 = \left\{ \sup\{|\wt Y^i_j - \wt Y^i_k |: k_{m,1} \leq j,k \leq k_{m,2}\}
\geq a^{(1+\lambda + 4 \delta)\lambda^{m}/4} \right \}.
\end{align*}
An application of Kolmogorov's inequality shows that 
\begin{align}\label{o26.14}
\P\left( A^i_1 \right)
\leq c_1 a^{-\delta\lambda^{m}/2}.
\end{align} 

Let
\begin{align*}
A_2^i = \{T(\bar Y^i , \{0,1\}) < T^+(\bar Y^i ,  a^{\lambda^{m+1}}) \}.
\end{align*}

By gambler's ruin formula, for $m\geq 1$, 
\begin{align}\label{o25.3}
\P\left((A_2^i)^c
\mid \bar Y^i_0 \leq  a^{\lambda^m} \right)
\leq 2 \frac{a^{\lambda^m}}{a^{\lambda^{m+1}}} 
= 2\left(a ^{1-\lambda}\right)^{\lambda^ m}. 
\end{align}

Let
\begin{align*}
A_3^i =\{T(\bar Y^i , \{0,1\} \cup [ a^{\lambda^{m+1}},\infty)) 
\leq  a^{(1+\lambda+ \delta)\lambda^{m}} \}.
\end{align*}
We have the following standard estimate,
\begin{align*}
\E\left(T(\bar Y^i , \{0,1\} \cup [ a^{\lambda^{m+1}},\infty))
\mid \bar Y^i_0 \leq  a^{\lambda^m} \right)\leq c_2 a^{(1+\lambda)\lambda^{m}} .
\end{align*}
Hence, 
\begin{align}\label{o25.4}
\P\left((A_3^i)^c
\mid \bar Y^i_0 \leq  a^{\lambda^m} \right)
\leq c_2 a^{(1+\lambda)\lambda^{m}} /a^{(1+\lambda+ \delta)\lambda^{m}} 
= c_2 a^{- \delta\lambda^{m}} . 
\end{align}

Let $c_3 = c_1 + c_2 + 14$ so that for $a\geq a_2$ and $m\geq 1$,
\begin{align}\label{o26.15}
&2\left(a ^{1-\lambda}\right)^{\lambda^ m}
+ c_1 a^{- \delta\lambda^{m}/2} 
+ c_2 a^{- \delta\lambda^{m}} 
 +  14 a^{- \delta\lambda^{m}} 
\leq 2\left(a ^{1-\lambda}\right)^{\lambda^ m}
+  c_3 a^{- \delta\lambda^{m}/2}.
\end{align}

Recall that $\delta_1>0$ has been fixed. Given $\lambda, \delta$ and $a_2$ chosen so far, we can find $a_0 \geq a_2$ so that, for $a\geq a_0$, we have
\begin{align}\label{o27.3}
\sum_{m=1}^\infty d \left(2\left(a ^{1-\lambda}\right)^{\lambda^ m}
+ c_3 a^{- \delta\lambda^{m}/2} \right ) < \delta_1/4 \land 1/2.
\end{align}
This completes the specification of $\delta, \lambda$ and $a_0$.

Let $\wh X_t = \wt X_t$ for $t\leq T(X- \wt X, \bzero)$ and let $\{\wh X_t, t \geq T(X- \wt X, \bzero)\}$ be a continuous random walk on $\Z^d$ with the same skeleton process as that of $\{\wt X_t, t \geq T(X- \wt X, \bzero)\}$, but with holding times independent of $X$ and $\{\wt X_t, t \leq T(X- \wt X, \bzero)\}$.
Let $S(j)$ be the time of the $j$-th jump of $X$, let $\wt S(j)$ be the time of the $j$-th jump of $\wt X$, and let $\wh S(j)$ be the time of the $j$-th jump of $\wh X$. 

Let $t_1=a^{(1+\lambda+ 2\delta)\lambda^{m}}$. Let $M$ be the number of jumps of $X$ before $t_1$ and let $\wh M$ be the number of jumps of $\wh X$ before $t_1$.
Recall $k_{m,1}$ from \eqref{o26.1}.
The random variable $S(k_{m,1}) $ is the sum of $k_{m,1}$ independent exponential random variables. By  the Chebyshev inequality, using \eqref{o26.5}, for $m\geq 1$,
\begin{align}
\P(M \leq k_{m,1}) 
&= \P(S(k_{m,1}) \geq t_1)
\leq \frac{k_{m,1}}{(k_{m,1} - t_1)^2}
\leq 2\frac{\lceil a^{(1+\lambda+ 2\delta)\lambda^{m}} - a^{(1+\lambda + 3 \delta)/2\lambda^{m}} \rceil}{(a^{(1+\lambda + 3 \delta)\lambda^{m}/2})^2 }
\nonumber \\
&\leq 2 \frac{ a^{(1+\lambda+ 2\delta)\lambda^{m}}}{a^{(1+\lambda+ 3\delta)\lambda^{m}} } = 2 a^{-\delta\lambda^{m}}. \label{o26.7}
\end{align}
For the same reason, we have,
\begin{align}\label{o26.8}
\P(\wh M \leq k_{m,1}) \leq  2 a^{-\delta\lambda^{m}}.
\end{align}
A similar calculation using \eqref{o26.3} and \eqref{o26.6} gives
\begin{align}
\P(M \geq k_{m,2})  &\leq 4 a^{-\delta\lambda^{m}},\label{o26.9}\\
\P(\wh M \geq k_{m,2})  &\leq 4 a^{-\delta\lambda^{m}}.\label{o26.10}
\end{align}

Let
\begin{align*}
A_4 = \{k_{m,1} \leq  M \leq k_{m,2},
k_{m,1} \leq  \wh M \leq k_{m,2} \}.
\end{align*}
We combine \eqref{o26.7}, \eqref{o26.8}, \eqref{o26.9} and \eqref{o26.10} to see that
\begin{align}\label{o26.13}
\P\left( A_4\right)
\geq 1- 12 a^{-\delta\lambda^{m}}.
\end{align}

Let
\begin{align*}
A_5 = \{
M \leq a^{(1+\lambda+ \delta)\lambda^{m}} \} .
\end{align*}
It follows from \eqref{o26.2} and \eqref{o26.7} that
\begin{align}\label{o26.11}
\P(A_5) \leq 2 a^{-\delta\lambda^{m}}.
\end{align}

Suppose that $(A^i_1)^c \cap A_2^i \cap A^i_3 \cap A_4 \cap A_5^c$ holds. Then 
$X^i_{t_1} = Y^i_M $ (by the definition of $M$) and $|Y^i_M - \wt Y^i_M| \leq 1$ (because $A_2^i \cap A^i_3  \cap A_5^c$ holds).
We also have $\wh X^i_{t_1} = \wt Y^i_{\wh M} $ (by the definition of $\wh M$)
and $|\wt Y^i_{\wh M} - \wt Y^i_M| \leq a^{(1+\lambda + 4 \delta)\lambda^{m}/4}$
(because $(A^i_1)^c \cap A_4$ holds). It follows that, using condition \eqref{n3.6},
\begin{align*}
|X^i_{t_1} - \wh X^i_{t_1}| 
&\leq
 |X^i_{t_1} - \wt Y^i_M| + |\wh X^i_{t_1} - \wt Y^i_M|
= |Y^i_M - \wt Y^i_M|  + |\wt Y^i_{\wh M} - \wt Y^i_M| 
\leq 1 + a^{(1+\lambda + 4 \delta)\lambda^{m}/4} \\
&\leq a^{\lambda^{m-1}}/d.
\end{align*}
Let
\begin{align}\label{de3.1}
U_m =  t_1 \land T(X- \wt X, \bzero) =
a^{(1+\lambda+ 2\delta)\lambda^{m}} \land T(X- \wt X, \bzero).
\end{align}
Then
\begin{align*}
|X^i_{U_m} - \wt X^i_{U_m}| = |X^i_{U_m} - \wh X^i_{U_m}|  
\leq |X^i_{t_1} - \wh X^i_{t_1}| \leq a^{\lambda^{m-1}}/d. 
\end{align*}
It follows from \eqref{o26.14}, \eqref{o25.3}, \eqref{o25.4}, \eqref{o26.13}, \eqref{o26.11} and \eqref{o26.15} that 
\begin{align*}
\P((A^i_1)^c \cap A_2^i \cap A^i_3 \cap A_4 \cap A_5^c)
&\geq 1 -
2\left(a ^{1-\lambda}\right)^{\lambda^ m}
+ c_1 a^{- \delta\lambda^{m}/2} 
+ c_2 a^{- \delta\lambda^{m}} 
 +  14 a^{- \delta\lambda^{m}} \\ 
&\geq 1-
2\left(a ^{1-\lambda}\right)^{\lambda^ m} -  c_3 a^{- \delta\lambda^{m}/2}.
\end{align*}
This implies that $U_m$ has the following property, 
\begin{align*}
\P\left(|X^i_{U_m} - \wt X^i_{U_m}|  \leq a^{\lambda^{m-1}}/d
\mid |X_{0} - \wt X_{0}|  \leq a^{\lambda^{m}}
\right)
\geq 1-
2\left(a ^{1-\lambda}\right)^{\lambda^ m}-  c_3 a^{- \delta\lambda^{m}/2}. 
\end{align*}
It follows that
\begin{align}\label{o27.1}
\P\left(|X_{U_m} - \wt X_{U_m}|  \leq a^{\lambda^{m-1}}
\mid |X_{0} - \wt X_{0}|  \leq a^{\lambda^{m}}
\right)
\geq 1- d\left(
2\left(a ^{1-\lambda}\right)^{\lambda^ m} + c_3 a^{- \delta\lambda^{m}/2} \right) . 
\end{align}

\medskip

{\it Step 2}. 
Next we will argue that we can define $Z$ and $\wt Z$ so that $T(Z- \wt Z, \bzero)< \infty$, a.s., for any initial distributions of $Z_0$ and $\wt Z_0$.

Suppose that $Z^*$ and $ Z^{**}$ are independent continuous time simple random walks. 
Fix any $t_2\in(0,\infty)$. It is easy to see that there exists $p_1>0$ such that 
\begin{align}\label{o27.2}
\P (T(Z^*-  Z^{**}, \bzero) < t_2 \mid |Z^* _0 - Z^{**}_0| \leq  a^{\lambda } ) > p_1.
\end{align}

We will define two continuous time simple random walks $Z$ and $\wt Z$
starting from arbitrary deterministic points $z_0$ and $\wt z_0$.
The construction will be based on a family of intermediate processes $Z^j$ and $\wt Z^j$ defined as follows.  
Let $m\geq 1$ be the smallest integer such that  
$|z _0 - \wt z_0| \leq  a^{\lambda ^m}$. Then we let 
$Z^m$ and $\wt Z^m $ be continuous time simple random walks  defined as 
$X$ and $\wt X$
at the beginning of Step 1, with $Z^m_0 = z_0$ and $\wt Z^m_0 = \wt z_0$.
If $m>1$ then let $U_m$ be defined as in \eqref{de3.1} but relative to $Z^m$ and $\wt Z^m$. If $m=1$ then let $U_1 = T(Z^1- \wt Z^1, \bzero) \land t_2$.

We continue the construction using two-stage induction. In one of the stages, the index will decrease.
Suppose that we have defined $Z^j$, $\wt Z^j$ and $U_j$ for $j=m, m-1, m-2, \dots, n$. 
If $Z^n_{U_n} = \wt Z^n_{U_n} $
or $n=1$ or $|Z^n_{U_n} - \wt Z^n_{U_n} | > a ^{\lambda ^{n-1}}$
 then 
we stop the induction, that is, we do not define $Z^{n-1}$, $\wt Z^{n-1}$ and $U_{n-1}$.
Suppose that one of these events occurred. Let $R_1 = \sum_{n\leq k\leq m} U_k$.
We define $W^1_t$ and $\wt W^1_t$ for
$t\in [0, R_1]$ by setting $W^1_0 = z_0$, $\wt W^1_0 = \wt z_0$,
\begin{align*}
W^1_t = Z^j\left( t- \sum_{j+1\leq k\leq m} U_k\right), \qquad 
\wt W^1_t = \wt Z^j\left( t- \sum_{j+1\leq k\leq m} U_k\right) ,
\end{align*}
for 
$t \in \left(\sum_{j+1\leq k\leq m} U_k, \sum_{j\leq k\leq m} U_k \right]$
and $j = m,m-1, \dots, n$.

Next suppose that $n>1$, $Z^n_{U_n} \ne \wt Z^n_{U_n} $ and $|Z^n_{U_n} - \wt Z^n_{U_n} | \leq a ^{\lambda ^{n-1}}$.
Then we construct $Z^{n-1}$ and $\wt Z^{n-1}$ in the same way as $X$ and $\wt X$ were constructed at the beginning of Step 1, with $Z^{n-1}_0 = Z^n_{U_n}$ and $\wt Z^{n-1}_0 = \wt Z^n_{U_n}$. We require that jumps of these processes are determined by the family $\{N^x_t, t > \sum_{n\leq k\leq m} U_k\}$ in the sense that $Z^{n-1}_t$ jumps at a time $s > 0$ if and only if the Poisson process $N^x$ jumps at time $s - \sum_{n\leq k\leq m} U_k$, where $x = Z^{n-1}_{s-}$. Similarly, $\wt Z^{n-1}_t$ jumps at a time $s > 0$ if and only if the Poisson process $N^x$ jumps at time $s - \sum_{n\leq k\leq m} U_k$, where $x = \wt Z^{n-1}_{s-}$. We construct the skeleton processes $Y$ and $\wt Y$ for $Z^{n-1}$ and $\wt Z^{n-1}$ so that they start from $Y_0 = Z^n_{U_n}$ and $\wt Y_0 = \wt Z^n_{U_n}$ but otherwise they are independent of $\{Z^j_t, t\leq U_j\}$ and  $\{ \wt Z^j_t, t\leq U_j\}$ for $j=m,m-1, \dots, n$. 

Note that the inductive procedure necessarily ends because the parameter $n$ cannot decrease below 1.

By the strong Markov property, \eqref{o27.3}, \eqref{o27.1} and \eqref{o27.2}, 
\begin{align}\label{o27.4}
&\P\left(W^1_{R_1} = \wt W^1_{R_1}\right)\\
 &\geq
\P\left(\left(
\bigcap_{2 \leq j \leq m} \left\{ |Z^j_{U_j} - \wt Z^j_{U_j} | 
\leq a ^{\lambda ^{j-1}}\right\}
\cap \left\{Z^1_{U_1} = \wt Z^1_{U_1}\right\}
\right)
\cup \bigcup_{2 \leq j \leq m} \left\{Z^j_{U_j} = \wt Z^j_{U_j}\right\}
\right)  \nonumber \\
&\geq \left( 1 - \sum _{2 \leq j \leq m} d \left(
2\left(a ^{1-\lambda}\right)^{\lambda^ m} + c_3 a^{- \delta\lambda^{m}/2} \right) \right)
p_1 > p_1/2.\nonumber
\end{align}
Note that the above estimate does not depend on $m$.

We proceed with the second induction argument.
Suppose that $W^j$, $\wt W^j$ and $R_j$ have been defined for $j = 1, 2, \dots, \ell$. If $W^\ell_{R_\ell} = \wt W^\ell_{R_\ell} $ then we let $\zeta = \sum_{i=1}^\ell R_i$. We define $Z_t$ and $\wt Z_t$ for
$t\in [0, \zeta]$ by setting $Z_0 = z_0$, $\wt Z_0 = \wt z_0$, and
\begin{align*}
Z_t = W^i\left( t- \sum_{1\leq k\leq i-1} R_k\right), \qquad 
\wt Z_t = \wt W^i\left( t- \sum_{1\leq k\leq i-1} R_k\right) ,
\end{align*}
for 
$t \in \left(\sum_{1\leq k\leq i-1} R_k, \sum_{1\leq k\leq i} R_k \right]$
and $i = 1,2, \dots, \ell$.
We let $\{Z_t , t> \zeta\}$ be a continuous time random walk with $Z_{\zeta+} = Z_{\zeta}$ but otherwise independent of $\{Z_t, t \leq \zeta\}$. We require, as usual, that $Z_t$ jumps at a time $s > \zeta$ if and only if the Poisson process $N^x$ jumps at time $s$, where $x = Z_{s-}$. We also let $\wt Z_t = Z_t$ for $t > \zeta$.   

If $W^\ell_{R_\ell} \ne \wt W^\ell_{R_\ell} $
then we construct $W^{\ell+1}$ and $\wt W^{\ell+1}$ 
in the same way as $W^1$ and $\wt W^1$ were constructed, with $W^{\ell+1}_0 = W^\ell_{R_\ell}$ and $\wt W^{\ell+1}_0 = \wt W^\ell_{R_\ell}$. We require that jumps of these processes are determined by the family $\{N^x_t, t > \sum_{1\leq k\leq \ell} R_k\}$ in the sense that $W^{\ell+1}_t$ jumps at a time $s > 0$ if and only if the Poisson process $N^x$ jumps at time $s - \sum_{1\leq k\leq \ell} R_k$, where $x = W^{\ell+1}_{s-}$. Similarly, $\wt W^{\ell+1}_t$ jumps at a time $s > 0$ if and only if the Poisson process $N^x$ jumps at time $s - \sum_{1\leq k\leq \ell} R_k$, where $x = \wt W^{\ell+1}_{s-}$. 

By \eqref{o27.4}, 
\begin{align*}
\P \left(
W^j_{R_j} \ne \wt W^j_{R_j}, j=1,2, \dots, \ell
\right) \leq (1-p_1/2)^\ell.
\end{align*}
Letting $\ell\to\infty$, we conclude that $\zeta < \infty$, a.s.

\medskip

{\it Step 3}. 
Recall that an arbitrarily small $\delta_1>0$ has been fixed in Step 1 and fix some $a\geq a_0$, $\lambda>1$ and $\delta >0$ satisfying conditions \eqref{n3.4}-\eqref{o29.3}, \eqref{o26.2}-\eqref{o26.6}, \eqref{n3.6}, \eqref{o26.15} and \eqref{o27.3}.
Since the coupling time $\zeta$ for $Z$ and $\wt Z$ constructed in Step 2 is finite, a.s., we can find $r$ so large that 
\begin{align}\label{o29.4}
\P (T(Z- \wt Z, \bzero) < T^+(|Z- \wt Z|, r) \land r \mid |Z _0 - \wt Z_0| \leq  a^{\lambda } ) > 1-\delta_1/4.
\end{align}

We will strengthen the claim proved in Step 2. 
Recall $t_1=a^{(1+\lambda+ 2\delta)\lambda^{m}}$ and $U_m $ defined in \eqref{de3.1}. For fixed $\lambda$, we may make $a_0$ larger, if necessary, so that for all $a\geq a_0$ and  $m\geq 1$ we have $\sum_{j=1}^m a^{(1+\lambda+ 2\delta)\lambda^{j}} + r
\leq 2 a^{(1+\lambda+ 2\delta)\lambda^{m}} $. Let $ t_2 = 2 a^{(1+\lambda+ 2\delta)\lambda^{m}}$.
Then the same argument which was used in \eqref{o27.4} and the inequality \eqref{o27.3} yield
\begin{align*}%\label{n2.1}
 \P&\left(\left(
\bigcap_{2 \leq j \leq m} \left\{ |Z^j_{U_j} - \wt Z^j_{U_j} | 
\leq a ^{\lambda ^{j-1}}\right\}
\cap \left\{U_j \leq a^{(1+\lambda+ 2\delta)\lambda^{j}}\right\}
\right)
\cup \bigcup_{2 \leq j \leq m} \left\{Z^j_{U_j} = \wt Z^j_{U_j}\right\}
\right)   \\
&\geq  1 - \sum _{2 \leq j \leq m} d \left(
2\left(a ^{1-\lambda}\right)^{\lambda^ m} + c_3 a^{- \delta\lambda^{m}/2} \right)  > 1- \delta_1/4.\nonumber
\end{align*}
Hence
\begin{align}\label{n3.1}
\P& (T^-(|Z- \wt Z|, a^\lambda) < T^+(|Z- \wt Z|, a^{\lambda^{m+1}}) 
\land 2 a^{(1+\lambda+ 2\delta)\lambda^{m}} \mid |Z _0 - \wt Z_0| \leq  a^{\lambda^m } ) \\
&>  1-\delta_1/4 .\nonumber
\end{align}
Since $Z$ and $\wt Z$ are continuous time random walks, standard estimates show that for some $c_4$ and all sufficiently large $m$,
\begin{align}\label{n3.2}
&\P\left(\sup_{0\leq t \leq t_2}|Z_{t} - Z_0| \geq a^{(1+\lambda+ 3\delta)\lambda^{m}/2} \right) \leq
c_4 a^{- \delta\lambda^{m}/2} \leq \delta_1/4,\\
&\P\left(\sup_{0\leq t \leq t_2}|\wt Z_{t} -\wt Z_0| \geq a^{(1+\lambda+ 3\delta)\lambda^{m}/2} \right) \leq
c_4 a^{- \delta\lambda^{m}/2} \leq \delta_1/4.\label{n3.3}
\end{align} 
These estimates agree with those in part (i) of the lemma.

Let
\begin{align*}
T_* = t_2 \land T^+(|Z_\cdot-  Z_0|, a^{(1+\lambda+ 3\delta)\lambda^{m}/2} )
\land T^+(|\wt Z_\cdot- \wt Z_0|, a^{(1+\lambda+ 3\delta)\lambda^{m}/2} ).
\end{align*}
The strong Markov property applied at $T^-(|Z- \wt Z|, a^\lambda)$ and \eqref{o29.4}, \eqref{n3.1}, \eqref{n3.2} and \eqref{n3.3} imply for large $m$,
\begin{align}\label{n3.11}
\P (T(Z- \wt Z, \bzero) < T_* \mid |Z _0 - \wt Z_0| \leq  a^{\lambda^m } ) > 1-\delta_1.
\end{align}
The proof of part (ii) of the lemma is complete.
\end{proof}

\section{Convergence to stationary distribution}\label{st_conv}

\begin{theorem}\label{n13.2}
Let $Q_\infty$ be defined as in Theorem \ref{n13.1}.
Suppose that the initial distribution of $\{\calM_t, t\geq 0\}$ is shift invariant, i.e., for every $y\in \Z^d$, the distributions of 
$\{M^x_0, x\in \Z^d\}$ and 
$\{M^{x+y}_0, x\in \Z^d\}$ are identical.
Assume that for some constants $c_1$ and $\alpha < 2 d$, we have
\begin{align}
\P(M^\bzero_0 \geq 0) &=1, \label{n26.1} \\
\E M^\bzero_0 &= 1, \label{n19.7}\\
\Var \left( \sum_{x\in K_n} M^x_0 \right) &\leq c_1 n^ \alpha,
\quad \text{  for all  } n\geq 1. \label{n19.8}
\end{align}
Then the distributions of $\calM_t$ converge to $Q_\infty$ as $t\to \infty$.
\end{theorem}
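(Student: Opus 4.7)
The plan is to follow the coupling strategy of Theorem \ref{n13.1}, Steps 2--3, adapting it to a comparison between the non-stationary process starting from the shift-invariant $\mu := $ law of $\calM_0$ and the stationary process $\wt\calM$ with $\wt\calM_0 \sim Q_\infty$. For any fixed $\eps_1 > 0$, I would construct a coupling $(\calM^\circ, \wt\calM^\circ)$ on a common probability space, driven by a single family of i.i.d.\ Poisson processes $\{N^v\}_{v \in \Z^d}$, with marginals $\mu$ and $Q_\infty$, such that at a time $t_* = 2a^{(1+\lambda+2\delta)\lambda^m}$ (with $a, \lambda, \delta, m$ from Lemma \ref{n14.1}) and a matching $n \sim n_\beta^{1/\beta}$ for suitable $\beta \in (\alpha/(2d), 1)$,
\begin{align*}
\E \sum_{x \in K_n} |M^{\circ,x}_{t_*} - \wt M^{\circ,x}_{t_*}| \leq \eps_1 n^d.
\end{align*}
This is exactly the construction from Step 3 of the proof of Theorem \ref{n13.1}: partition $K_n$ into sub-blocks $K^j_n$ of side $n_\beta \sim a^{\lambda^m}/(2d)$, match total block masses by scaling within each block, and use Lemma \ref{n14.1} to couple pairwise the transporting random walks so that they meet by time $t_*$ with high probability. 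The only quantitative change from \cite{BBPS} is the block-matching estimate: using \eqref{n19.8} and Corollary \ref{n19.1},
\begin{align*}
\E \Bigl|\sum_{x \in K^j_n} M^x_0 - \sum_{x \in K^j_n} \wt M^x_0 \Bigr| \leq \sqrt{c_1}\, n_\beta^{\alpha/2} + n_\beta^{(d-1)/2},
\end{align*}
so the total mass discarded during matching, summed over the $\sim n^d / n_\beta^d$ blocks, is $O(n^{d - \beta(d - \alpha/2)})$, which is $o(n^d)$ by the hypothesis $\alpha < 2d$. All remaining estimates in Steps 2--3 of Theorem \ref{n13.1} carry through without modification.

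To make the coupling shift-invariant, I would randomize the origin of the block partition uniformly over $\{0, 1, \dots, n_\beta - 1\}^d$; by linearity of expectation the bound is preserved. Extend the coupling for $t \geq t_*$ by continuing to drive both $\calM^\circ$ and $\wt\calM^\circ$ with the same Poisson processes $\{N^v\}$. The extended coupling remains shift-invariant, so $\psi(t) := \E|M^{\circ,\bzero}_t - \wt M^{\circ,\bzero}_t|$ satisfies $n^d \psi(t) = \E\sum_{x \in K_n}|D^x_t|$ for every $n$, where $D^x_t := M^{\circ,x}_t - \wt M^{\circ,x}_t$. I claim $\psi$ is non-increasing: at a jump of $N^v$ with $v \in K_n$, the decrease $|D^v_{t-}|$ in $|D^v|$ offsets the possible net increase $\sum_{u \lra v,\, u \in K_n} |D^v_{t-}|/d_v \leq |D^v_{t-}|$ at neighbors; at a jump of $N^v$ with $v$ just outside $K_n$, the sum increases by at most $|D^v_{t-}|$. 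Taking expectations and using shift invariance gives
\begin{align*}
n^d \bigl(\psi(t) - \psi(s)\bigr) \leq |\partial K_n| \int_s^t \psi(r)\, dr
\end{align*}
with $|\partial K_n| = O(n^{d-1})$. Dividing by $n^d$ and letting $n \to \infty$ yields $\psi(t) \leq \psi(s)$ for all $t \geq s$.

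Combining the two ingredients, $\psi(t) \leq \psi(t_*) \leq \eps_1$ for all $t \geq t_*(\eps_1)$, so the $1$-Wasserstein distance between the marginal laws of $M^x_t$ (under $\calM_0 \sim \mu$) and $\wt M^x_0$ (under $Q_\infty$) is at most $\eps_1$ for $t \geq t_*(\eps_1)$. Since $\eps_1 > 0$ is arbitrary, this distance tends to $0$ as $t \to \infty$. For any finite $F \subset \Z^d$, the same coupling yields $\E\sum_{x \in F}|M^{\circ,x}_t - \wt M^{\circ,x}_t| \leq |F|\,\psi(t) \to 0$; since $(\wt M^{\circ,x}_t, x \in F)$ has the joint $Q_\infty$-marginal on $F$ for every $t$, Slutsky's lemma implies that the joint distribution of $(M^x_t, x \in F)$ converges to its $Q_\infty$ counterpart, which gives the desired convergence of $\calM_t$ to $Q_\infty$. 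The main technical obstacle is the first step: verifying that the intricate parameter interplay in Steps 2--3 of the proof of Theorem \ref{n13.1} (relating $\beta, \delta, a, \lambda, m, n, n_\beta$, and in particular the counterparts of \eqref{au7.1}--\eqref{au7.2}) still closes when the stationary variance bound $n_\beta^{d-1}$ is replaced by the weaker $c_1 n_\beta^\alpha$; the condition $\alpha < 2d$ is precisely the borderline at which the block discard remains $o(n^d)$.
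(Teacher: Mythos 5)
Your proposal is correct and follows essentially the same route as the paper's own proof: the same block-matching coupling driven by a single family of Poisson processes, the same appeal to Lemma \ref{n14.1}, the same replacement of the stationary block-variance $n_\beta^{d-1}$ by $c_1 n_\beta^{\alpha}$ with $\alpha<2d$ ensuring the mass discarded in the matching is $o(n^d)$, and the same monotonicity-in-time of the per-site expected discrepancy obtained from shift invariance of the joint law. The only differences are presentational: you argue directly (showing $\psi(t)$ is non-increasing and small, then passing to finite-dimensional marginals) where the paper runs a proof by contradiction, and you spell out via a boundary-flux estimate the step the paper dismisses as ``easily implies''.
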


\begin{proof}

We will adapt the proof of Theorem \ref{n13.1}.

\medskip

{\it Step 1}. 
Let $Q_*$ denote the distribution of $\calM_0$.
Assume that  the theorem is false, i.e., the distributions of $\calM_t$ do not converge to $Q_\infty$ as $t\to \infty$.  Then there exist $x_1, \dots, x_{i_1}\in \Z^d$ and $t_m$ such that  $t_m\to\infty$ as $m\to \infty$, and the distributions of
$\{(M^{x_1}_{t_m}, \dots, M^{x_{i_1}}_{t_m}), m\geq 1\}$ do not 
converge to the restriction of $Q_\infty$ to $\{x_1, \dots, x_{i_1}\}$.
Let $\wt\calM_t$ denote the process with the initial distribution $Q_\infty$.
Suppose that $\{\calM_t, t\geq 0\}$ and $\{\wt \calM_t, t\geq 0\}$ are constructed on the same space in such a way that the joint distribution of $(\calM, \wt \calM)$ is invariant under shifts by vectors in $\Z^d$. Then there exist $c_2, p_1 >0$ such that for every $m_0$ there exists $m> m_0$ such that,
\begin{align}\label{n14.5}
\P\left( \sum_{i=1}^{i_1} |M^{x_i}_{t_m}-  \wt M^{x_i}_{t_m} | > c_2\right) > p_1.
\end{align}

Let $i_2 = \left\lceil \max_{1\leq i \leq i_1} |x_i|\right\rceil$.
Let $\Gamma_n^1 = \{x_1, \dots, x_{i_1}\}$ and let $\{\Gamma^j_n, j=1, \dots, i_3\}$ be the family of all sets of the form $\Gamma^1_n + i_2 v$ for some $v\in \Z^d$, such that $\Gamma^1_n + i_2 v \subset K'_n $. If $j\ne i$ then $\Gamma^j_n \cap \Gamma^i_n = \emptyset$.
Note that $i_3 = i_3(n) \geq \lfloor n/(2 i_2) \rfloor ^ d \geq c_3 n^d $
for $n \geq 2 i_2$. We obtain from \eqref{n14.5} that for every $m_0$ there exists $m> m_0$ such that,
\begin{align}\label{de18.1}
\E  \sum_{x\in K_n} |M^{x}_{t_m}-  \wt M^{x}_{t_m}  | 
&=
\E  \sum_{x\in K'_n} |M^{x}_{t_m}-  \wt M^{x}_{t_m}  | 
=
\E \sum_{j=1}^{i_3} \sum_{x\in \Gamma^j_n} |M^{x}_{t_m}-  \wt M^{x}_{t_m}  |  \\
&= i_3 
 \sum_{x\in \Gamma^1_n} \E |M^{x}_{t_m}-  \wt M^{x}_{t_m}  | 
\geq i_3 c_2 p_1 \geq c_4 n^d. \nonumber
\end{align}
We will show that the last inequality is false for some $n$ and large $m$ and hence the theorem is true.

\medskip

{\it Step 2}.
Fix some $\beta \in \left(0 , 1\right)$. We will consider pairs of positive integers $n$ and $n_\beta $ such that $n$ is the smallest integer greater than or equal to $n_\beta ^{1/\beta}$ which is divisible by $n_\beta$. 
Let $K_n^1 = \{1, \dots, n_\beta\}^d$ and let $\{K^j_n, j=1, \dots, j_n\}$ be the family of all sets of the form $K^1_n + n_\beta v$ for some $v\in \Z^d$, such that $(K^1_n + n_\beta v) \cap K_n \ne \emptyset$. We will write $\calJ= \{1, \dots, j_n\}$.

 We have 
\begin{align}\label{n19.11}
\E_{Q_*} \left(\sum_{x\in K^j_n} M^{x}_0\right) =
\E_{Q_\infty} \left(\sum_{x\in K^j_n} \wt M^{x}_0\right) = |K^j_n| = n_\beta^d.
\end{align}
By Corollary \ref{n19.1},
\begin{align*}
 \Var_{Q_\infty} \left(\sum_{x\in K^j_n} \wt M^{x}_0\right) = n_\beta^{d-1}.
\end{align*}
It follows from this, \eqref{n19.7}, \eqref{n19.8}, \eqref{n19.11} and H\"older's inequality that,
\begin{align}\label{n19.10}
\E \left|\sum_{x\in K^j_n} M^{x}_0 - \sum_{x\in K^j_n} \wt M^{x}_0 \right|
&\leq 
\E \left|\sum_{x\in K^j_n} M^{x}_0 - n_\beta^d \right| 
+ \E \left|\sum_{x\in K^j_n} \wt M^{x}_0 - n_\beta^d \right| \\
&\leq \sqrt{c_1} n_\beta^{\alpha/2} +  n_\beta^{(d-1)/2}. \nonumber
\end{align}

Fix $K^j_n$
and suppose that $\sum_{x\in K^j_n} M^{x}_0 \leq \sum_{x\in K^j_n}\wt M^{x}_0$. Then let 
\begin{align}\label{n19.20}
a (j,n) &= \frac{\sum_{x\in K^j_n} M^{x}_0 }{ \sum_{x\in K^j_n} \wt M^{x}_0} \leq 1,\\
M^{*,x}_0 &=  M^{x}_0, \qquad x\in K^j_n,\label{n19.21} \\
\wt M^{*,x}_0 &= a (j,n) \wt M^{x}_0, \qquad x\in K^j_n. \label{n19.22}
\end{align}
Note that  
\begin{align}\label{n19.23}
\Lambda^j_n :=
\sum_{x\in K^j_n} M^{*,x}_0 = \sum_{x\in K^j_n} \wt M^{*,x}_0.
\end{align}
If $\sum_{x\in K^j_n} M^{x}_0 > \sum_{x\in K^j_n}\wt M^{x}_0$ then we interchange the roles of processes $\calM$ and $\wt \calM$ in the definitions \eqref{n19.20}-\eqref{n19.22} so that \eqref{n19.23} still holds.

We obtain from \eqref{n19.23}
\begin{align}\label{n19.24}
\sum_{x\in K_n} M^{*,x}_0 = \sum_{x\in K_n} \wt M^{*,x}_0.
\end{align}
It follows from \eqref{n19.10} that for some $c_5, c_6$ and $\gamma > 0$,
\begin{align}\label{n19.25}
\E&\left(
\left(\sum_{x\in K_n} M^{x}_0 - \sum_{x\in K_n} M^{*,x}_0\right)
+
\left(\sum_{x\in K_n}\wt M^{x}_0 - \sum_{x\in K_n} \wt M^{*,x}_0\right)
\right)\\
&\leq (\sqrt{c_1} n_\beta^{\alpha/2} +  n_\beta^{(d-1)/2}) c_ 5 n^{d(1-\beta)}
\leq c_6 n^{d-\gamma}. \nonumber
\end{align}

\medskip

{\it Step 3}. 
We will now choose values for parameters used in this step.
Recall that we have fixed a $\beta\in(0,1)$. We now fix  $\delta_1>0$ so small that $6\delta_1 < c_4/4$, where $c_4$ is the constant in \eqref{de18.1}. Then we
choose $a_0,m_1, \delta $ and $\lambda$ corresponding to $\beta$ and $\delta_1$ as in Lemma \ref{n14.1}. Consider $a\geq a_0$, $m\geq m_1$ and let $n_\beta$ be  such that $d n_\beta < a^{\lambda^m} \leq 2 d n_\beta$. Recall $c_6$ and $\gamma$ from \eqref{n19.25}.
We make $n$ and $m$  larger, if necessary, so that
\begin{align}%\label{au7.1}
 &n^d - 2 d  n^{d-1}  ( (2d) ^{(1/\beta)(1-\delta)} n^{1-\delta} + n^\beta)
> |K_n| (1-\delta_1), \nonumber\\
\label{au7.3}
& c_6 n^{d-\gamma} \leq \delta_1 n^d.
\end{align}

Suppose that $\{N^x_t, t\in \R\}$, $x\in \Z^d$, are independent Poisson processes.
We  assume that $\calM_0$, $\wt\calM_0$ and $\{N^x_t, t\in \R\}$, $x\in \Z^d$, are independent.

Recall definitions \eqref{n19.20}-\eqref{n19.22}. 
Let $\mu^{j}$ and $\wt \mu^{j}$ be the probability measures on $K^j_n$ defined by
\begin{align*}%\label{*de4.5}
\mu^{j}(x) = {M^{*,x}_0}/{ \Lambda^j_n },
\qquad \wt \mu^{j}(x) = {\wt M^{*,x}_0}/{ \Lambda^j_n },
\qquad x \in K^j_n.
\end{align*}

Let $Z_t$ and $\wt Z_t$ be a coupling of two continuous time nearest neighbor random walks constructed as in Lemma \ref{n14.1}, with the following initial distribution, 
\begin{align*}
\P(Z_0 = x ) = \mu^{j}(x), \qquad
\P(\wt Z_0 = x ) = \wt\mu^{j}(x), \qquad x\in K^j_n.
\end{align*}
The joint distribution of $Z_0$ and $\wt Z_0$ is irrelevant to our argument but for the sake of definiteness we assume that these random variables are independent.

Recall that $d n_\beta < a^{\lambda^m} \leq 2 d n_\beta$. Then $|Z _0 - \wt Z_0| \leq  a^{\lambda^m }$ 
and Lemma \ref{n14.1} implies that, 
\begin{align}\label{*n10.20}
\P (T(Z- \wt Z, \bzero) < T_* ) > 1-\delta_1.
\end{align}

Recall the following estimate from \eqref{n3.7}, 
\begin{align}\label{*n3.7}
a^{(1+\lambda+ 3\delta)\lambda^{m}/2} 
\leq (2d) ^{(1/\beta)(1-\delta)} n^{1-\delta}. 
\end{align}

Recall $\calJ$ from Step 2.
Let $\calA$ be the family of all $j \in \calJ$ such that 
$\dist(K^j_n, K_n^c) \geq a^{(1+\lambda+ 3\delta)\lambda^{m}/2}$. 
Let $K^*_n = \bigcup_{j \in \calA} K^j_n$.
We have shown in \eqref{n10.3} that, 
\begin{align}
|K^*_n| > |K_n| (1-\delta_1). \label{*n10.3}
\end{align}

Let $\left\{\calM^{\bt}_t, t\geq 0\right\} $ be the meteor process with the initial 
distribution defined by $ M^{\bt,x}_0 = M^{*,x}_0 $ if $x\in K_n$. For all other $x \in \Z^d \setminus K_n$, we let $ M^{\bt,x}_0 = 0$. The
jump times of $\calM^{1,x}$ are defined by $\{ N^{k,x}_t, t\in \R\}$, $x\in \Z^d$, in the usual way.
The process $\left\{\wt \calM^{\bt}_t, t\geq 0\right\} $ is defined in an analogous way, with the initial distribution $ \wt M^{\bt,x}_0 = \wt M^{*,x}_0 $ for $x\in K_n$.

Recall from Lemma \ref{n14.1} that $ t_* = 2 a^{(1+\lambda+ 2\delta)\lambda^{m}}$.
Let $\calF_*$ be the $\sigma$-field generated by $\calM^{\bt}_0$, $\wt\calM^{\bt}_0$, and
$\{ N^{k,x}_t, 0\leq t\leq t_*\}$, $x\in \Z^d$.
Let $\calG_0$ be the $\sigma$-field generated by $\calM^{\bt}_0$ and $\wt\calM^{\bt}_0$.
We have, a.s., for all $x\in \Z^d$,
\begin{align*}%\label{*n11.1}
M^{\bt,x}_{t_*} = 
\sum_{j\in \calJ}\Lambda^j_n \P_{\mu_j}(Z_{t_*} =x \mid \calF_*),
\qquad \wt M^{\bt,x}_{t_*} = 
\sum_{j\in \calJ} \Lambda^j_n  \P_{\wt \mu_j}(\wt Z_{t_*} =x \mid \calF_*).
\end{align*}
This implies that, a.s., 
\begin{align*}
\sum_{x\in K_n} |M^{\bt,x}_{t_*} - \wt M^{\bt,x}_{t_*}|
\leq 
\sum_{j\in \calJ} \Lambda^j_n  \P_{\wt \mu_j}(Z_{t_*} \ne\wt Z_{t_*}  \mid \calF_*).
\end{align*}
By \eqref{*n10.20},
\begin{align*}
\E&\sum_{x\in K_n} |M^{\bt,x}_{t_*} - \wt M^{\bt,x}_{t_*}|
=
\E\E\left(\sum_{x\in K_n} |M^{\bt,x}_{t_*} - \wt M^{\bt,x}_{t_*}|
\mid \calF_* \right)\\
&\leq 
\E \E\left(
\sum_{j\in \calJ}\Lambda^j_n
\bone(X_{t_*} \ne \wt X_{t_*} )
\mid \calF_* \right)
=\E \E\left(
\sum_{j\in \calJ}\Lambda^j_n
\bone(X_{t_*} \ne \wt X_{t_*} )
\mid \calG_0 \right)\\
&\leq \delta_1 \E\sum_{j\in \calJ} \Lambda^j_n .
\end{align*}

Since
\begin{align*}
\E \Lambda^j_n = 
\E_{Q_k} \sum_{x\in K^j_n} M^{*,x}_0 \leq
\E_{Q_k} \sum_{x\in K^j_n} M^{x}_0 = |K^j_n| = n_\beta^d,
\end{align*}
it follows that 
\begin{align}\label{*n10.5}
\E\sum_{x\in K_n} |M^{\bt,x}_{t_*} - \wt M^{\bt,x}_{t_*}|
\leq   \delta_1 |\calJ| n_\beta^d =  \delta_1 (n/n_\beta)^d n_\beta^d
= \delta_1 n^d.
\end{align}

Let $\left\{\calM^{2,x}_t, t\geq 0\right\} $ be the meteor process  with the initial 
distribution defined by $ M^{2,x}_0 = M^{x}_0 - M^{1,x}_0  $ if $x\in K_n$. For all other $x \in \Z^d \setminus K_n$, we let $ M^{2,x}_0 = 0$. The
jump times of $\calM^{2,x}$ are defined by $\{ N^{k,x}_t, t\in \R\}$, $x\in \Z^d$, in the usual way.
The process $\left\{\wt \calM^{2,x}_t, t\geq 0\right\} $ is defined in an analogous way.
It follows from \eqref{n19.25} and \eqref{au7.3} that
\begin{align}\label{*n10.6}
&\E\sum_{x\in K_n} |M^{2,x}_{t_*} -\wt M^{2,x}_{t_*}|
\leq
\E\sum_{x\in K_n} M^{2,x}_{t_*} 
+ \E\sum_{x\in K_n} \wt M^{2,x}_{t_*}\\
&\leq \E\left(
\left(\sum_{x\in  K_n} M^{x}_{0} - \sum_{x\in  K_n} M^{*,x}_{0}\right)
+
\left(\sum_{x\in  K_n} \wt M^{x}_{0} - \sum_{x\in  K_n} \wt M^{*,x}_{0}\right)
\right)\nonumber \\
&\leq  c_6 n^{d-\gamma} \leq \delta_1 n^d.\nonumber
\end{align}

Let $\left\{\calM^{3,x}_t, t\geq 0\right\} $ be the meteor process  with the initial 
distribution defined by $ M^{3,x}_0 = M^{x}_0   $ if $x\in \Z^d \setminus K_n$. For  $x \in  K_n$, we let $ M^{3,x}_0 = 0$. The
jump times of $\calM^{3,x}$ are defined by $\{ N^{k,x}_t, t\in \R\}$, $x\in \Z^d$, in the usual way.
The process $\left\{\wt \calM^{3,x}_t, t\geq 0\right\} $ is defined in an analogous way.
Note that for all $x \in K_n$ and $t\geq 0$,
\begin{align}\label{*n10.7}
M^{x}_t = M^{1,x}_t + M^{2,x}_t + M^{3,x}_t,
\end{align}
and the analogous formula holds for $\wt M^{x}_t$.
We have by \eqref{*n10.3},
\begin{align}\nonumber
\E\sum_{x\in K_n \setminus K^*_n} & |M^{3,x}_{t_*} -\wt M^{3,x}_{t_*}|
\leq  \E\sum_{x\in K_n \setminus K^*_n} (M^{3,x}_{t_*} 
+  \wt M^{3,x}_{t_*})
\leq  \E\sum_{x\in  K_n \setminus K^*_n} (M^{x}_{t_*} 
+ \wt M^{x}_{t_*})\\
&= \sum_{x\in  K_n \setminus K^*_n} (\E M^{x}_{t_*} 
+ \E \wt M^{x}_{t_*})
= 2 |K_n \setminus K^*_n| < 2 \delta_1 |K_n| 
= 2 \delta_1 n^d. \label{*n10.8}
\end{align}

Since 
$\E M^{3,x}_0 =  \bone_{\Z^d \setminus K_n} (x)$,
one can easily show that
\begin{align}\label{*de5.1}
\E M^{3,x}_{t_*} 
&= \sum_{y \in \Z^d \setminus K_n} \P(Z_{t_*} =x  \mid Z_0 =y)\\
&= \sum_{y \in \Z^d \setminus K_n} \P(Z_{t_*} =y  \mid Z_0 =x)
= \P(Z_{t_*} \in \Z^d \setminus K_n \mid Z_0 = x).\nonumber
\end{align}
Recall that the Hausdorff distance between $K^*_n$ and $\Z^d \setminus K_n$ is greater than $a^{(1+\lambda+ 3\delta)\lambda^{m}/2}$.  These observations, \eqref{*de5.1} and Lemma \ref{n14.1} (i) imply that for $x\in K^*_n$, $\E M^{3,x}_{t_*}  \leq \delta_1$. For the same reason, $\E \wt M^{3,x}_{t_*}  \leq \delta_1$. It follows that 
\begin{align}\nonumber
\E\sum_{x\in  K^*_n} |M^{3,x}_{t_*} - \wt M^{3,x}_{t_*}|
&\leq  \E\sum_{x\in  K^*_n} (M^{3,x}_{t_*} 
+  \wt M^{3,x}_{t_*})
= \sum_{x\in  K^*_n} (\E M^{3,x}_{t_*} 
+ \wt \E M^{3,x}_{t_*})
\\
&\leq 2 \delta_1 | K^*_n|  < 2 \delta_1 |K_n| = 2 \delta_1 n^d. \label{*n10.9}
\end{align}

Recall that we have chosen $\delta_1>0$ so that $6\delta_1 < c_4/4$.
In view of \eqref{*n10.7}, the estimates \eqref{*n10.5}, \eqref{*n10.6}, \eqref{*n10.8} and \eqref{*n10.9} imply that, for large $n$,
\begin{align*}
\E\sum_{x\in  K_n} |M^{x}_{t_*} - \wt M^{x}_{t_*}|
&\leq  6  \delta_1 n^d < (c_4/4)\delta_1 n^d. 
\end{align*}
Recall that the joint distribution of $(\calM, \wt \calM)$ is invariant under shifts by vectors in $\Z^d$. This and the last estimate imply that for every $x\in \Z^d$, 
$\E |M^{x}_{t_*} - \wt M^{x}_{t_*}| < (c_4/4)\delta_1 $. It follows that $\E (M^{x}_{t_*} - \wt M^{x}_{t_*})^+ < (c_4/4)\delta_1 $ and $\E (\wt M^{x}_{t_*} -  M^{x}_{t_*})^+ < (c_4/4)\delta_1 $ for all $x\in \Z^d$, where $a^+ = \max(a,0)$.
This easily implies that $\E (M^{x}_{t} - \wt M^{x}_{t})^+ < (c_4/4)\delta_1 $ and $\E (\wt M^{x}_{t} -  M^{x}_{t})^+ < (c_4/4)\delta_1 $ for all $x\in \Z^d$ and $t\geq t_*$. Hence, for $t\geq t_*$,
\begin{align*}
\E\sum_{x\in  K_n} |M^{x}_{t} - \wt M^{x}_{t}|
 < (c_4/2)\delta_1 n^d. 
\end{align*}
This contradicts \eqref{de18.1} and, therefore, completes the proof.
\end{proof}

\begin{corollary}\label{n23.1}
Let $Q_\infty$ be defined as in Theorem \ref{n13.1}.
Suppose that $M^x_0$, $x\in \Z^d$, are i.i.d. non-negative random variables with
$\E M^x_0 =1$.
Then the distributions of $\calM_t$ converge to $Q_\infty$ as $t\to \infty$.
\end{corollary}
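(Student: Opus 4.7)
The strategy is to reduce the corollary to Theorem \ref{n13.2}. The i.i.d.\ hypothesis with $\E M^\bzero_0 = 1$ already supplies three of the four hypotheses of that theorem: shift invariance (in fact full exchangeability), non-negativity, and unit mean. Only the variance bound \eqref{n19.8} is in doubt. If $\Var(M^\bzero_0) < \infty$, independence gives
\begin{align*}
\Var\Big(\sum_{x\in K_n} M^x_0\Big) = n^d \Var(M^\bzero_0),
\end{align*}
so \eqref{n19.8} holds with $\alpha = d < 2d$ and Theorem \ref{n13.2} applies directly. The main obstacle is handling the general case, in which $\Var(M^\bzero_0)$ may be infinite or undefined.

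For the general case I would truncate and exploit the linearity of the meteor process in the initial data, which is evident from the graphical formula $M^x_T = \sum_\Lambda \wt M^\Lambda_T$ in the proof of Proposition \ref{de22.1}. Fix $R > 0$ and write $M^x_0 = A^{x,R}_0 + B^{x,R}_0$ with $A^{x,R}_0 := M^x_0 \wedge R$ and $B^{x,R}_0 := (M^x_0 - R)^+$. Let $\calA^R_t$ and $\calB^R_t$ be the meteor processes driven by the same Poisson clocks $N^v$ as $\calM$, started from $A^{\cdot,R}_0$ and $B^{\cdot,R}_0$ respectively. Linearity gives $M^x_t = A^{x,R}_t + B^{x,R}_t$ a.s.\ for every $x$ and $t$. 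Set $c_R := \E[M^\bzero_0 \wedge R]$, so $c_R \uparrow 1$ and $\epsilon_R := 1 - c_R \downarrow 0$ as $R \to \infty$. The rescaled initial data $A^{x,R}_0/c_R$ are i.i.d., non-negative, with mean $1$ and variance bounded by $R^2/c_R^2$, so Theorem \ref{n13.2} applies and the distribution of $\calA^R_t/c_R$ converges to $Q_\infty$ as $t \to \infty$; by linearity, the distribution of $\calA^R_t$ converges to $Q^{(c_R)}_\infty$ in the sense of Remark \ref{de23.2}. For the tail piece, the graphical formula applied to the constant initial mean $\epsilon_R$ (equivalently, the discrete heat equation of Remark \ref{de23.2} preserves constants) gives $\E B^{x,R}_t = \epsilon_R$ for every $x$ and $t$.

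The final step is a triangle inequality on finite-dimensional marginals. Fix vertices $x_1, \ldots, x_k \in \Z^d$ and a bounded Lipschitz function $\phi : \R^k \to \R$ with Lipschitz constant $L$; write $\phi(\calM_t)$ as shorthand for $\phi(M^{x_1}_t, \ldots, M^{x_k}_t)$ and interpret $\int \phi \, dQ$ as integration against the $(x_1, \ldots, x_k)$-marginal of $Q$. Then
\begin{align*}
\left| \E \phi(\calM_t) - \int \phi \, dQ_\infty \right|
&\leq \E \bigl| \phi(\calM_t) - \phi(\calA^R_t) \bigr|
 + \left| \E \phi(\calA^R_t) - \int \phi \, dQ^{(c_R)}_\infty \right| \\
&\quad + \left| \int \phi \, dQ^{(c_R)}_\infty - \int \phi \, dQ_\infty \right|.
\end{align*}
The first summand is at most $L \sum_{i=1}^k \E B^{x_i, R}_t = L k \epsilon_R$; the third tends to $0$ as $R \to \infty$ by bounded convergence, since $c_R \to 1$ and $\phi$ is continuous; for each fixed $R$ the middle term tends to $0$ as $t \to \infty$ by the previous paragraph. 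Sending $t \to \infty$ and then $R \to \infty$ yields weak convergence of all finite-dimensional marginals of $\calM_t$ to those of $Q_\infty$, which on $\R^{\Z^d}$ endowed with the coordinatewise metric $\rho$ is equivalent to convergence in distribution of $\calM_t$ to $Q_\infty$.
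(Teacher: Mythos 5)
Your proposal is correct and follows essentially the same route as the paper: truncate the i.i.d.\ initial masses, apply Theorem \ref{n13.2} to the bounded part (rescaled to mean $1$, giving convergence to $Q^{(c_R)}_\infty$ in the sense of Remark \ref{de23.2}), and control the tail part through its small mean via conservation of mass and shift invariance. The only differences are cosmetic --- you truncate with $M^x_0\wedge R$ rather than $M^x_0\bone_{\{M^x_0\leq a\}}$, and you spell out the final triangle-inequality step that the paper leaves as ``easily imply.''
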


\begin{proof}

Fix an arbitrarily small $\delta >0$. 
For $0< a< \infty$, let 
$ M^{x,a}_0 = M^x_0 \bone_{\{M^x_0\leq a\}}$, and let $\{\calM^a_t, t\geq 0\}$ be the meteor process with the initial distribution $\{M^{x,a}_0, x\in \Z^d\}$.
Let $\wt M^{x,a}_0 = M^x_0 \bone_{\{M^x_0> a\}}$ and let $\{\wt \calM^a_t, t\geq 0\}$ be the meteor process with the initial distribution $\{\wt M^{x,a}_0, x\in \Z^d\}$. Suppose that $a$ is so large that $\mu(a) :=\E M^{\bzero,a}_0 > 1-\delta$
and $\E \wt M^{\bzero,a}_0 < \delta$. Since $M^{x,a}_0$, $x\in \Z^d$, are i.i.d. and bounded, we have for some finite $c_1$,
\begin{align*}
\P(M^{\bzero,a}_0 \geq 0) &=1,  \\
\E M^{\bzero,a}_0 &= \mu(a) \in(1-\delta, 1), \\
\Var \left( \sum_{x\in K_n} M^{x,a}_0 \right) &\leq c_1 n^ d,
\quad \text{  for all  } n\geq 1. 
\end{align*}
Comparing these formulas to \eqref{n26.1}-\eqref{n19.8},
we see that Theorem \ref{n13.2} implies that the distributions
of $\calM^a_t$ converge to $ Q^{\mu(a)}_\infty$ as $t\to \infty$, where
$ Q^{\mu(a)}_\infty$ is as in Remark \ref{de23.2}.
We have $\E \wt M^{x,a}_t < \delta$ for all $t >0$ and $x\in \Z^d$
by the conservation of mass and shift invariance. Since $\calM_t =  \calM^a_t + \wt \calM^a_t$ and $\delta $ is arbitrarily small, the last two claims easily imply the corollary.
\end{proof}

\begin{remark}\label{de23.1}
(i)
The condition $\alpha < 2 d$ in Theorem \ref{n13.2} cannot be relaxed. To see this, consider the following initial distribution of the process $\calM$. With probability $1/2$, $M^x_0 = 0$ for all $x\in \Z^d$. With probability $1/2$, $M^x_0 = 2$ for all $x\in \Z^d$. It is elementary to check that this distribution is shift invariant and satisfies \eqref{n26.1}-\eqref{n19.8} with $\alpha=2d$. 
Recall distributions $ Q^{c}_\infty$ from Remark \ref{de23.2}.
It follows easily from Theorem \ref{n13.2} that the distributions of $\calM_t$ converge, as $t\to \infty$, to $(1/2)Q^{0}_\infty + (1/2)Q^{2}_\infty \ne Q_\infty$.

(ii) We conjecture that Theorem \ref{n13.2} remains true even if we drop the assumption that $\calM_0$ has shift invariant distribution.
\end{remark}

\section{Flows and reflected paths}\label{flow}

We will prove a theorem about the flow of mass between adjacent sites in $\Z$. We will write $F^x_t$ to denote the net flow between $x$ and $x+1$ on the time interval $[0,t]$, for $x\in \Z$ and $t\geq 0$. More formally, 
\begin{align*}%\label{o9.1}
F^x_t = \frac 12 \sum_{0\leq s\leq t}  \left((N^x_s - N^x_{s-}) M^x_{s-}
- (N^{x+1}_s - N^{x+1}_{s-}) M^{x+1}_{s-}\right) .
\end{align*}

\begin{theorem}\label{n30.2}
Consider the meteor process $\calM_t$ on $\Z$ in the stationary regime, i.e., suppose that the distribution of $\calM_0$ is $Q_\infty$. Then for every $t\geq 0$, 
\begin{align*}
\Var F^\bzero_t \leq 2.
\end{align*}
\end{theorem}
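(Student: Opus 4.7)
The core of the argument is a pathwise mass-conservation identity. Because the mass at vertex $x$ only changes through jumps of $N^{x-1}$, $N^{x}$, or $N^{x+1}$, matching contributions against the definition of $F^x_t$ gives the exact identity
\[
M^x_t - M^x_0 \;=\; F^{x-1}_t - F^{x}_t, \qquad x\in\Z,\ t\ge 0.
\]
Telescoping over $x=-n+1,\dots,n$ then produces
\[
F^{-n}_t - F^{n}_t \;=\; \sum_{x=-n+1}^{n} (M^x_t - M^x_0).
\]

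Next I would take variances of both sides. Corollary \ref{n19.1} with $d=1$ gives $\Var_{Q_\infty}(\sum_{x\in I} M^x_0)=1$ for every interval $I$ of consecutive integers, independently of $|I|$; by stationarity in time the same value holds at time $t$. Hence by the $L^2$ triangle inequality, $\Var(F^{-n}_t - F^{n}_t)\le 4$. Translation invariance of $Q_\infty$ yields $\Var F^{-n}_t = \Var F^{n}_t = \Var F^0_t =: v$, so expanding the variance of the difference,
\[
2v \;=\; \Var(F^{-n}_t - F^{n}_t) + 2\Cov(F^{-n}_t, F^{n}_t) \;\le\; 4 + 2\Cov(F^{-n}_t, F^{n}_t).
\]
Thus the theorem is reduced to showing that $\Cov(F^{-n}_t, F^{n}_t)\to 0$ as $n\to\infty$ for each fixed $t$.

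For the remaining step I would invoke the graphical construction in Proposition \ref{de22.1}: almost surely there is a random but a.s.\ finite window $A^x_t\subset\Z$, with tails controlled by a sub-critical percolation cluster, such that $\{M^x_s:0\le s\le t\}$ is a deterministic function of the initial masses and Poisson jumps inside $A^x_t$. Consequently $F^x_t$ is measurable with respect to the data on $A^x_t\cup A^{x+1}_t$, and for $n$ large the regions used to compute $F^{-n}_t$ and $F^{n}_t$ are disjoint with probability close to $1$. Since the driving Poisson processes on disjoint sites are independent, the remaining dependence is that of $\calM_0$ on two disjoint, distant windows, which I would handle via the weak convergence $Q_k\to Q_\infty$ of Theorem \ref{n13.1}: on $\calC_k^1$ the analogous estimate is cleaner because irreducibility there gives spatial ergodicity and WIMP-based covariance computations (as in the proof of Theorem \ref{n14.6}) are available, and one can then take $k\to\infty$.

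The main obstacle is this final covariance-vanishing step. The correlation information proved for $Q_\infty$ in the excerpt is only pairwise ($\Cov_{Q_\infty}(M^x_0,M^y_0)=0$ already for $|x-y|\ge 2$), which is too weak on its own to decorrelate the nonlinear functionals $F^{-n}_t$ and $F^n_t$; one genuinely needs a block/mixing statement. Localising the two flows via Proposition \ref{de22.1} reduces the task to controlling the covariance of $\calM_0$-data on two disjoint distant windows, and the most workable route is to prove the bound uniformly on the finite tori and then pass to the limit through Theorem \ref{n13.1}.
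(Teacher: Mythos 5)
Your first half matches the paper: the telescoping identity, the bound $\Var(F^{-n}_t-F^{n}_t)\le 4$ via Corollary \ref{n19.1}, and the recognition that everything hinges on decorrelating the two distant flows. But the proposal stops exactly at the step that constitutes the actual proof, and the route you sketch for it is both incomplete and based on a misdiagnosis. You assert that the pairwise statement $\Cov_{Q_\infty}(M^v_0,M^w_0)=0$ for $|v-w|\ge 2$ is ``too weak'' and that a genuine block/mixing property of $Q_\infty$ is needed, to be obtained on the tori and passed through Theorem \ref{n13.1}. The paper shows this is not necessary: for fixed Poisson data, $F^z_t$ is \emph{linear} in the initial masses, $F^z_t=\sum_v \alpha(v,z)M^v_0$, with coefficients $\alpha(v,z)$ measurable with respect to the Poisson processes alone. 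On the localization event $A_x$ (no acceptable path crossing the middle of $[0,x]$ up to time $t$), $F^\bzero_t$ involves only $v\le (x-3)/2$ and $F^x_t$ only $w\ge (x+3)/2$; the coefficient--indicator products on the two half-lines are independent of each other and of $\calM_0$, so the conditional covariance splits term by term and each term vanishes using nothing more than \eqref{n19.5}. Your proposed alternative (prove a uniform flow-covariance bound on $\calC_k$ and let $k\to\infty$) is not carried out and is not trivial to carry out: flow functionals are not obviously continuous for the convergence in Theorem \ref{n13.1}(ii), and no torus analogue of the needed estimate is established in the paper.

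There is also an integrability gap. Writing $\Var(F^{-n}_t-F^n_t)=2v-2\Cov(F^{-n}_t,F^n_t)$ and concluding $2v\le 4+2\Cov$ presupposes $F^\bzero_t\in L^2$, which is precisely what the theorem asserts (with a quantitative bound); if $v=\infty$ the identity is vacuous (e.g.\ $X=Y$ with infinite variance has $\Var(X-Y)=0$), so the argument is circular as stated. The paper avoids this by working with conditional variances given $A_x$, proving separately that $\E|F^\bzero_t|<\infty$ (via the events that no meteor hit certain sites on $[0,t]$), and recovering the unconditional second-moment bound through Fatou's lemma, with the constant $2$ coming from the symmetric split $\Var(F^\bzero_t\mid A_x)+\Var(F^x_t\mid A_x)=\Var(F^\bzero_t-F^x_t\mid A_x)\le 4/(1-\delta)$. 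To complete your proof you would need both the linear-representation decorrelation argument (or a genuine substitute) and a non-circular treatment of square integrability.
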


\begin{proof}
Fix any $t\geq 0$ and consider an odd integer $x> 6$. We will eventually let $x\to \infty$ so $x$ should be thought of as a large integer. Note that
\begin{align*}
F^\bzero_t - F^x_t = \sum_{1\leq y \leq x} M^y_t - \sum_{1\leq y \leq x} M^y_0.
\end{align*}
By stationarity of the process $\calM$, the distribution of $\sum_{1\leq y \leq x} M^y_s$ does not depend on $s$, so
\begin{align}\label{n29.3}
\E\left(F^\bzero_t - F^x_t\right) = 
\E\left(\sum_{1\leq y \leq x} M^y_t\right )
- \E\left(\sum_{1\leq y \leq x} M^y_0\right)=0.
\end{align}
We obtain from \eqref{n28.1},
\begin{align}\label{n29.2}
&\Var \left(F^\bzero_t - F^x_t\right) =
\Var\left( \sum_{1\leq y \leq x} M^y_t - \sum_{1\leq y \leq x} M^y_0\right)\\
&\leq  \Var\left( \sum_{1\leq y \leq x} M^y_t \right)
+2 \left( \Var\left( \sum_{1\leq y \leq x} M^y_t \right)
\Var\left( \sum_{1\leq y \leq x} M^y_0\right)\right)^{1/2}
+  \Var\left( \sum_{1\leq y \leq x} M^y_0\right)\nonumber\\
&\leq 4 . \nonumber
\end{align}

Given $\{N^y, y\in \Z\}$, $y_1,y_2\in \Z$ and $t_1 < t_2$, we will say that there is a path between $(y_1, t_1)$ and $(y_2, t_2)$ if some mass could pass from the first point to the other according to the rules of the meteor process evolution (see the definition of ``acceptable path'' in the proof of Proposition \ref{de22.1}). 
Let $A^-_x $ be the event that there is no path from any point $((x-3)/2, s)$, $s\in[0,t]$, to $(\bzero, t)$.
Let $A^+_x $ be the event that there is no path from any point $((x+3)/2, s)$, $s\in[0,t]$, to $(x, t)$.
Let $A_x = A^-_x \cap A^+_x$. It is easy to see that  $\P_{Q_\infty}(A_x) \to 1$ as $x\to \infty$. 
We obtain using \eqref{n29.3} and \eqref{n29.2},
\begin{align}\label{n29.5}
\Var\left(\left(F^\bzero_t - F^x_t\right) \bone_{A_x}\right)
& \leq \E\left(\left(F^\bzero_t - F^x_t\right) \bone_{A_x}
\right)^2\leq \E \left(F^\bzero_t - F^x_t\right)^2 \\
& = \Var \left(F^\bzero_t - F^x_t\right) \leq 4.\nonumber
\end{align}

If a random variable $\xi$ has finite variance 
or $\xi \bone_{A_x}$ has finite variance then
\begin{align}\label{n29.4}
\Var&( \xi \mid A_x) \\
&= \E((\xi - \E(\xi \mid A_x))^2 \mid A_x)\nonumber\\
&= \E\left(\left(\xi - \E(\xi \bone_{A_x}) \frac 1 {\P(A_x)}\right)^2 \bone_{A_x}\right) \frac 1 {\P(A_x)}\nonumber \\
&= \frac 1 {\P(A_x)}
\E \left( \xi^2 \bone_{A_x} - 2 \xi  \E(\xi \bone_{A_x}) \frac {\bone_{A_x}} {\P(A_x)} + \frac 1 {\P(A_x)^2} \left(\E(\xi \bone_{A_x})\right)^2 \bone_{A_x}
\right) \nonumber\\
&= \frac 1 {\P(A_x)} \left( \E(\xi \bone_{A_x})^2 - \left(\E(\xi \bone_{A_x})\right)^2 \right)
+ \frac{  \P(A_x) -1}{ \P(A_x)^2} \left(\E(\xi \bone_{A_x})\right)^2
\label{n29.7}\\
&= \frac 1 {\P(A_x)} \Var(\xi \bone_{A_x})
+ \frac{  \P(A_x) -1}{ \P(A_x)^2} \left(\E(\xi \bone_{A_x})\right)^2
\nonumber \\
&\leq \frac 1 {\P(A_x)} \Var(\xi \bone_{A_x})
\label{n29.6}.
\end{align}
Consider an arbitrary $\delta\in(0,1/2)$ and
fix $x_1$ so large that for $x\geq x_1$ we have  $\P(A_x) \geq 1-\delta$. 
We apply   \eqref{n29.4}-\eqref{n29.6} to $\xi = F^\bzero_t - F^x_t$ and we use \eqref{n29.5} to see that for $x\geq x_1$,
\begin{align}\label{o10.1}
\Var( F^\bzero_t - F^x_t \mid A_x) 
\leq  4/(1-\delta) .
\end{align}

We will now show that $F^\bzero_t $ and $F^x_t$ are conditionally uncorrelated given $A_x$.
Let $\calG_t =\sigma\{N^y_s, s\in[0,t], y \in \Z\}$. It is easy to see that there exist random
variables 
$\alpha(v,z)\geq 0$, $v,z \in \Z$, which are measurable with respect to $\calG_t$ and such that, a.s.,
for all $z \in \Z$, we have
$F^z_t = \sum_{v\in \Z} \alpha(v,z) M^v_0$.
The random variables $\alpha(v,z)$ encode the transport of the mass
from $v$ to $z$ and then to $z+1$, and from $v$
to $z+1$ and then to $z$,
along the paths ``opened'' by $N$'s. 

For $y\in \Z$, let $G_y$ be the event that there was no meteor hit at $y$ between times 0 and $t$.  We have $\P(G_y) = e^{-t}$ for all $y$. 
Suppose that $v, z \in \Z$ and $v< z$.
A part of the mass that was present at $v $ at time 0 could have moved between vertices $z$ and $z+1$ during the time interval $[0,t]$ only if the event $C(v,z):=\bigcup_{v\leq w \leq z-1} G_w $ did not occur.
By the independence of $G_y$'s, for $v<z$,
$\P\left(C(v,z)^c\right) = (1- e^{-t})^ {z-v}$.
Hence, $\P(\alpha(v,z) \ne 0) \leq (1- e^{-t})^ {z-v}$.
A similar argument yields 
$\P(\alpha(v,z) \ne 0) \leq (1- e^{-t})^ {v-z-1}$ for $v>z+1$.
Note that $\alpha(v,z) \leq 1$ for all $v$ and $z$.
These observations and \eqref{n19.3} imply that, for all $z_1, z_2 \in \Z$,
\begin{align*}
\E & \sum_{v\in \Z} \sum_{w\in \Z}| \alpha(v,z_1) M^v_0\alpha(w,z_2) M^w_0| =
\sum_{v\in \Z} \sum_{w\in \Z} \E| \alpha(v,z_1) \alpha(w,z_2)| \E| M^v_0 M^w_0|\\
&\leq \sum_{v\in \Z} \sum_{w\in \Z} (\E \alpha(v,z_1)^2)^{1/2} (\E \alpha(w,z_2)^2)^{1/2} 
(\E( M^v_0)^2)^{1/2} (\E( M^w_0)^2)^{1/2}\\
& \leq  \sum_{v\in \Z} (1- e^{-t})^ {|z_1-v|-1} < \infty.
\end{align*}
The above bound allows us to change the order of summation in the
calculation of $\Cov(F^\bzero_t ,F^x_t \mid A_x)$ below. Recall that $\bone_{A_x} = \bone_{A^-_x}\bone_{A^+_x}$. 
Since
\begin{align*}
A^-_x \in \sigma\{N^y_s, s\in[0,t], y \leq (x-3)/2\}
\text{  and  }
A^+_x \in \sigma\{N^y_s, s\in[0,t], y \geq (x+3)/2\},
\end{align*}
the events $A^-_x $  and $A^+_x $ are independent.
This implies independence of the following pairs of random variables for $v\leq (x-3)/2$ and $w\geq (x+3)/2$: $ \alpha(v,\bzero) M^v_0   \bone_{A^-_x}$  and $\bone_{A^+_x}$; 
 $\alpha(w,x) M^w_0   \bone_{A^-_x}$ and $\bone_{A^+_x}$. All these remarks imply that
\begin{align*}
&\Cov(F^\bzero_t ,F^x_t \mid A_x)
= \E(F^\bzero_t F^x_t\mid A_x)
- \E(F^\bzero_t \mid A_x) \E(F^x_t\mid A_x)\\
& = \E \left (\sum_{v\in \Z} \alpha(v,\bzero) M^v_0  \sum_{w\in \Z} \alpha(w,x) M^w_0 \bone_{A_x} \right)
/\P(A_x) \\
& \quad - \left[\E \left (\sum_{v\in \Z} \alpha(v,\bzero) M^v_0   \bone_{A_x} \right)
/\P(A_x)\right] \cdot
\left[\E \left (\sum_{w\in \Z} \alpha(w,x) M^w_0   \bone_{A_x} \right)
/\P(A_x)\right] \\
& = \E \left (\sum_{v\leq (x-3)/2} \alpha(v,\bzero) M^v_0 \bone_{A^-_x} \sum_{w\geq (x+3)/2} \alpha(w,x) M^w_0 \bone_{A^+_x} \right)
/\P(A_x) \\
& \quad - \left[\E \left (\sum_{v\leq (x-3)/2} \alpha(v,\bzero) M^v_0   \bone_{A^-_x}\bone_{A^+_x} \right)
/\P(A_x)\right] \times \\
& \qquad \times \left[\E \left (\sum_{w\geq (x+3)/2} \alpha(w,x) M^w_0   \bone_{A^-_x}\bone_{A^+_x} \right)
/\P(A_x)\right]\\
& = \sum_{v\leq (x-3)/2} \sum_{w\geq (x+3)/2} \Big[
\E\left( \alpha(v,\bzero) M^v_0 \bone_{A^-_x}  \alpha(w,x) M^w_0 \bone_{A^+_x}\right )
/\P(A_x) \\
& \quad - \E \left ( \alpha(v,\bzero) M^v_0   \bone_{A^-_x} \right)\left( \P(A^+_x)
/\P(A_x)\right) \E \left ( \alpha(w,x) M^w_0  \bone_{A^+_x} \right) \left( \P(A^-_x)
/\P(A_x)\right) \Big]\\
& = \left( 1/\P(A_x)\right)\sum_{v\leq (x-3)/2} \sum_{w\geq (x+3)/2} 
 \Big[
\E\left( \alpha(v,\bzero) M^v_0 \bone_{A^-_x}  \alpha(w,x) M^w_0 \bone_{A^+_x}\right ) \\
&\qquad\qquad\qquad - \E \left ( \alpha(v,\bzero) M^v_0   \bone_{A^-_x} \right)
 \E \left ( \alpha(w,x) M^w_0  \bone_{A^+_x} \right) \Big] .
\end{align*}
Each term in the last sum is equal to 0 because for any $v\leq (x-3)/2$ and $w\geq (x+3)/2$,
the random variables $M^v_0 $ and $ M^w_0$ are uncorrelated (see \eqref{n19.5}), the random variables
$\alpha(v,\bzero)  \bone_{A^-_x}$ and $\alpha(w,x) M^w_0 \bone_{A^+_x}$ are independent, and so are the random 
variables $\alpha(v,\bzero) M^v_0 \bone_{A^-_x} $ and $ \alpha(w,x)  \bone_{A^+_x}$.
We conclude that $\Cov(F^\bzero_t ,F^x_t \mid A_x)=0$, i.e., $F^\bzero_t $ and $F^x_t$ are conditionally uncorrelated given $A_x$. This and \eqref{o10.1} imply that
\begin{align*}
\Var( F^\bzero_t \mid A_x) + \Var( F^x_t \mid A_x) = \Var( F^\bzero_t - F^x_t \mid A_x) 
\leq  4/(1-\delta) .
\end{align*}
By symmetry, $\Var( F^\bzero_t \mid A_x)= \Var( F^x_t \mid A_x)$ so
\begin{align}\label{n29.8}
\Var( F^\bzero_t \mid A_x) 
\leq  2/(1-\delta) .
\end{align}

Recall events $G_y$ and let $H_y = \bigcup_{1\leq v \leq y} (G_v \cap G_{-v})$ for $y\geq 1$.
By independence of $G_y$'s, $\P(H_y^c) = (1- e^{-2t})^ y$ for $y\geq 1$.
A part of the mass that was present at $-y $ and  $y$ at time 0 could have moved between vertices $\bzero$ and $1$ during the time interval $[0,t]$ only if $H_y$ failed. Hence,
\begin{align*}
a:= \E|F^\bzero_t| \leq \E M^\bzero_0 + \sum_{y \geq 1} \P(H_y^c) 
\E (M^y_0+ M^{-y}_0)
\leq 1 + \sum_{y \geq 1} 2 (1- e^{-2t})^ y < \infty .
\end{align*}

Recall that $\P(A_x) \geq 1-\delta >1/2$ for $x\geq x_1$.
 We obtain from \eqref{n29.4}  and \eqref{n29.7},
\begin{align*}
\E(\xi \bone_{A_x})^2
&=
\P(A_x) \Var( \xi \mid A_x) 
+ \left(1-\frac{  \P(A_x) -1}{ \P(A_x)}\right) \left(\E(\xi \bone_{A_x})\right)^2\\
&\leq \Var( \xi \mid A_x) 
+ 2 \left(\E(\xi \bone_{A_x})\right)^2.
\end{align*}
We apply this formula to $\xi = F^\bzero_t $ and use \eqref{n29.8} to see that for $x\geq x_1$,
\begin{align}\label{de18.5}
\E(F^\bzero_t \bone_{A_x})^2
&\leq
 \Var( F^\bzero_t \mid A_x) 
+ 2 \left(\E(F^\bzero_t \bone_{A_x})\right)^2
\leq 2/(1-\delta) + 2a^2.
\end{align}

 It is easy to see that $A_x^- \subset A_{x+2}^-$ for all odd $x > 6$ and $\P(A_x^-) \to 1$ as $x\to \infty$. Given $A_x^-$, the event $A_x^+$ and the random variable $F^\bzero_t$ are independent so the conditional distribution of $F^\bzero_t$ given $A_x^-$ is the same as the conditional distribution of $F^\bzero_t$ given $A_x$. This implies that
\begin{align}\label{de18.6}
&\E(F^\bzero_t \bone_{A_x^-})^2 = 
\E((F^\bzero_t \bone_{A_x^-})^2 \mid A_x^-) \P(A_x^-)
= 
\E((F^\bzero_t )^2 \mid A_x^-) \P(A_x^-)\\
&= 
\E((F^\bzero_t )^2 \mid A_x) \P(A_x^-)
= \frac{\P(A_x^-) }{\P(A_x) }
\E((F^\bzero_t \bone_{A_x})^2 \mid A_x) \P(A_x) 
= \frac{\P(A_x^-) }{\P(A_x) }\E(F^\bzero_t \bone_{A_x})^2.\nonumber
\end{align}
Since $\lim_{x\to \infty } \P(A_x) = \lim_{x\to \infty } \P(A_x^-)=1$, the last formula and \eqref{de18.5} imply that for some $x_2$ and all $x\geq x_2$,
$\E(F^\bzero_t \bone_{A_x^-})^2
\leq 2/(1-\delta) + 2a^2 + \delta$.
By Fatou's Lemma, $\E(F^\bzero_t )^2 \leq 2/(1-\delta) + 2a^2+\delta$, so
\begin{align}\label{n29.9}
\Var F^\bzero_t \leq 
\E(F^\bzero_t )^2
\leq 2/(1-\delta) + 2a^2+\delta.
\end{align}
Recall that $\E |F^\bzero_t| < \infty$. By symmetry, 
$\E F^\bzero_t=0$. By dominated convergence, $\lim_{x\to \infty} \E(F^\bzero_t \bone_{A_x^-}) = 0$. A calculation similar to that in \eqref{de18.6} shows that $\E(F^\bzero_t \bone_{A_x}) =\frac{\P(A_x) }{\P(A_x^-) } \E(F^\bzero_t \bone_{A_x^-})$.
This, the previous observation and the fact that $\lim_{x\to \infty } \P(A_x) = \lim_{x\to \infty } \P(A_x^-)=1$ imply that $\lim_{x\to \infty} \E(F^\bzero_t \bone_{A_x}) = 0$. Hence, we can strengthen \eqref{de18.5}
to see that for any $\delta >0$, some $x_3$ and all $x\geq x_3$,
\begin{align*}%\label{de18.5}
\E(F^\bzero_t \bone_{A_x})^2
&\leq
 \Var( F^\bzero_t \mid A_x) 
+ 2 \left(\E(F^\bzero_t \bone_{A_x})\right)^2
\leq 2/(1-\delta) +\delta.
\end{align*}
This allows to strengthen \eqref{n29.9} as follows,
\begin{align*}%\label{n29.9}
\Var F^\bzero_t \leq 
\E(F^\bzero_t )^2
\leq 2/(1-\delta) +2\delta.
\end{align*}
Since $\delta>0$ is arbitrarily small,
this completes the proof.
\end{proof}

We will now introduce an alternative representation of the meteor process on $\Z$. The mass at each vertex will be represented by ordered particles. Any two particles will be always ordered in the same way, no matter at which vertex they reside.
Let 
\begin{align*}
\Gamma^0_0 &= 0,\\
\Gamma^k_0 &= \sum_{j=0}^{k-1} M^j_0, \qquad k\geq 1,\\
\Gamma^k_0 &= -\sum_{j=k}^{-1} M^j_0, \qquad k\leq -1.
\end{align*}
We define $\Gamma^k_t$ to be a piecewise constant RCLL function with values in $\R$ as follows. The function $\Gamma^k_\cdot$ jumps at time $t$ only if $N^{k-1}_t = N^{k-1}_{t-} +1$ or $N^{k}_t = N^{k}_{t-} +1$. If $N^{k-1}_t = N^{k-1}_{t-} +1$ then $\Gamma^k_\cdot$ jumps at time $t$ to $(\Gamma^{k-1}_{t-} + \Gamma^k_{t-})/2$.
If $N^{k}_t = N^{k}_{t-} +1$ then $\Gamma^k_\cdot$ jumps at time $t$ to $(\Gamma^{k}_{t-} + \Gamma^{k+1}_{t-})/2$. 

Heuristically speaking, functions $\Gamma^k$ play a role similar to that of the cumulative distribution function for probability distributions on the real line. A further heuristic interpretation of these functions is that they determine the positions of infinitely many (uncountably many) particles moving along non-crossing trajectories in the following sense. The particle with label $y\in \R$ is located at vertex $k \in \Z$ at time $t$ if an only if $\Gamma^k_t \leq y < \Gamma^{k+1}_t$. We formalize this by defining $H^y_t$, the position of $y$ at time $t$, to be the unique $k$ such that $\Gamma^k_t \leq y < \Gamma^{k+1}_t$. Note that for all $x,y\in \R$ and $s,t\geq 0$, $(H^x_s - H^y_s)(H^x_t - H^y_t) \geq 0$, i.e., $x$ and $y$ are always ordered in the same way.

\begin{proposition}\label{de19.3}
Suppose that $\calM_0$ has the distribution $Q_\infty$. Then for every $\alpha \in (0,2)$ there exists $c_1 < \infty$ such that for all $y\in \R$ and $t\geq 0$, $\E(|H^y_t - H^y_0|^\alpha) < c_1$.
\end{proposition}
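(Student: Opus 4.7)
The plan is to prove the tail bound $\P(|H^y_t - H^y_0| \geq n) \leq C/n^2$ with $C$ independent of $y$ and $t$, and then integrate by parts to obtain the $\alpha$-moment bound.

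The starting point is the identity $\Gamma^{k}_t - \Gamma^{k}_0 = -F^{k-1}_t$ for every $k \in \Z$ and $t \geq 0$, verified by checking that both sides change by $-M^{k-1}_{t-}/2$ when $N^{k-1}$ fires, by $+M^{k}_{t-}/2$ when $N^{k}$ fires, and are constant otherwise. Together with Theorem \ref{n30.2}, this yields $\Var(\Gamma^{k}_t - \Gamma^{k}_0) \leq 2$ for every $k$ and $t$. Setting $K := H^y_0$, if $H^y_t - H^y_0 \geq n$ then $\Gamma^{K+n}_t \leq y < \Gamma^{K+1}_0 \leq \Gamma^{K+n}_0$, so
\[
F^{K+n-1}_t = \Gamma^{K+n}_0 - \Gamma^{K+n}_t \geq \Gamma^{K+n}_0 - \Gamma^{K+1}_0 = \sum_{j=K+1}^{K+n-1} M^j_0.
\]
Splitting according to whether the partial sum exceeds $(n-1)/2$,
\[
\{H^y_t - H^y_0 \geq n\} \subseteq \Big\{F^{K+n-1}_t > (n-1)/2\Big\} \cup \Big\{\sum_{j=K+1}^{K+n-1} M^j_0 \leq (n-1)/2\Big\},
\]
and a mirror argument handles $\{H^y_0 - H^y_t \geq n\}$ using $F^{K-1}_t$ and the analogous partial sum to the left.

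For each fixed $k$, Theorem \ref{n30.2} gives $\P(F^{k+n-1}_t > (n-1)/2) \leq 8/(n-1)^2$, while Corollary \ref{n19.1} in dimension one yields $\Var(\sum_{j=k+1}^{k+n-1} M^j_0) = 1$ with mean $n-1$, so Chebyshev gives $\P(\sum_{j=k+1}^{k+n-1} M^j_0 \leq (n-1)/2) \leq 4/(n-1)^2$. Applying Chebyshev pointwise in $k$ and summing over the disjoint events $\{K = k\}$, the probability of the first event is bounded by $4\E[(F^{K+n-1}_t)^2]/(n-1)^2$ and that of the second by $4\E[(\sum_{j=K+1}^{K+n-1} M^j_0 - (n-1))^2]/(n-1)^2$. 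It therefore suffices to show that both expectations are bounded uniformly in $n$, $y$, and $t$.

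This uniform bound at the random index $K = H^y_0$ is the main obstacle, since $K$ is measurable with respect to the initial masses $M^0_0, \ldots, M^{K}_0$ and is not independent of the stationary sequences $(F^{k+n-1}_t)_{k}$ or $(\sum_{j=k+1}^{k+n-1} M^j_0)_{k}$. The approach I would take is to exploit that the event $\{K = k\}$ depends only on masses at sites $j \leq k$, whereas $F^{k+n-1}_t$ and $\sum_{j=k+1}^{k+n-1} M^j_0$ are essentially functions of data (initial masses and Poisson hits) to the right of $k$; combining this with the nearest-neighbour-only covariance structure from Theorem \ref{n14.6} and the almost-surely finite random domain of spatial dependence of $F^{k+n-1}_t$ supplied by the graphical construction in Proposition \ref{de22.1}, a decoupling argument should yield the required uniform moment bounds. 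Once $\P(|H^y_t - H^y_0| \geq n) \leq C'/n^2$ is in hand, summation by parts on the integer-valued $|H^y_t - H^y_0|$ gives
\[
\E|H^y_t - H^y_0|^\alpha \leq \alpha \sum_{n \geq 1} n^{\alpha - 1}\P(|H^y_t - H^y_0| \geq n) \leq C'' \sum_{n \geq 1} n^{\alpha - 3} < \infty
\]
for every $\alpha < 2$, as desired.
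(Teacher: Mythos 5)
Your route is the paper's route. The paper's proof of Proposition \ref{de19.3} sets $k_0=H^y_0$ and observes that $\{H^y_t-H^y_0>m\}$ forces either a flow of order $m$ across an edge near $k_0$ or the mass of the intervening block of $m$ sites to be at most half its mean, then applies the Chebyshev bounds coming from Theorem \ref{n30.2} and Corollary \ref{n19.1} and integrates the resulting $m^{-2}$ tail. Your decomposition is the same up to immaterial bookkeeping: your identity $\Gamma^k_t-\Gamma^k_0=-F^{k-1}_t$ is correct and is just the pathwise conservation-of-mass relation the paper uses implicitly, and you compare with the block mass at time $0$ where the paper uses time $t$ (either works by stationarity).

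The difficulty is that the step you yourself call ``the main obstacle''---a bound on $\E\bigl[(F^{K+n-1}_t)^2\bigr]$ and on the second moment of the block mass \emph{at the random index} $K=H^y_0$, uniform in $n$, $y$, $t$---is exactly the step you do not carry out. ``A decoupling argument should yield the required uniform moment bounds'' is a promise, not a proof, and it is not routine: Theorem \ref{n14.6} gives only pairwise covariances, not independence across a spatial cut, so conditioning on $\{K=k\}$ (an event determined by the initial masses between the origin and $k$) is not obviously harmless for the conditional law of $F^{k+n-1}_t$, even after using the a.s.\ finite dependence range from the graphical construction of Proposition \ref{de22.1}. Moreover the gap is consequential for the stated range of $\alpha$: the naive patch of a union bound over a window of $L$ indices around $\lfloor y\rfloor$, combined with $\P(|K-\lfloor y\rfloor|>L)\lesssim L^{-2}$ (which follows from Corollary \ref{n19.1}), gives a tail of order $L^{-2}+Ln^{-2}$, hence about $n^{-4/3}$ after optimizing $L$, which proves the proposition only for $\alpha<4/3$. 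Note that the paper performs no decoupling at all: it applies the variance bounds of Theorem \ref{n30.2} and Corollary \ref{n19.1} directly to $F^{k_0}_t$ and $\sum_{j=k_0+1}^{k_0+m}M^j_t$ with the random $k_0$ plugged in, treating the fixed-index bounds as uniform over the location. So, as written, your proposal reproduces the paper's argument except at its one delicate point, where it substitutes an unproven claim; to be complete you must either actually supply the decoupling estimate or justify the direct application of the fixed-index bounds at the random index, as the paper does.
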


\begin{proof}
Recall the notation $F^x_t$ from Theorem \ref{n30.2}. Let $k_0 = H^y_0$.
Consider any integer $m\geq 1$ and suppose that $|H^y_t - H^y_0|> m$. By symmetry, it will suffice to analyze the case $H^y_t - H^y_0> m$. If this event occurred then 
at least one of the following events occurred,
\begin{align*}
A_1 &= \{F^{k_0} _t \geq m/2\},\\
A_2 &= \left\{ \sum_{j=k_0+1}^{k_0+m} M^j_t \leq m/2\right\}.
\end{align*}
By Theorem \ref{n30.2},
\begin{align}\label{de19.1}
\P(A_1) \leq \frac{\Var F^{k_0} _t}{(m/2)^2} \leq 8 m^{-2}.
\end{align}
Corollary \ref{n19.1} implies that
\begin{align*}
\P(A_2) \leq 
\P \left( \left|\sum_{j=k_0+1}^{k_0+m} M^j_t - m\right| \geq m/2\right)
\leq
\frac{\Var \sum_{j=k_0+1}^{k_0+m} M^j_t}{(m/2)^2} \leq 4 m^{-2}.
\end{align*}
This and \eqref{de19.1} imply that, for $m \geq 1$,
\begin{align}\label{de19.2}
\P(|H^y_t - H^y_0|> m) = 2 \P(H^y_t - H^y_0> m)
\leq 2 (\P(A_1) + \P(A_2) ) \leq 24 m^{-2}.
\end{align}
Note that the inequality also holds for real $m \geq 1$.
It is well known that for a non-negative random variable $\xi$ and $\alpha >0$, 
\begin{align*}
\E\xi^\alpha
=  \alpha\int_0^\infty a^{\alpha-1} \P(\xi>a) da.
\end{align*}
Hence, \eqref{de19.2} yields for every $\alpha \in(0,2)$ and some $c_1=c_1(\alpha)<\infty$,
\begin{align*}
\E(|H^y_t - H^y_0|^\alpha) &=  \alpha\int_0^\infty a^{\alpha-1} \P(|H^y_t - H^y_0|>a) da \\
&\leq \alpha\int_0^1 a^{\alpha-1}  da 
+ \alpha\int_1^\infty a^{\alpha-1} 24 a^{-2} da < c_1 .
\end{align*}
\end{proof}

\begin{remark}
(i) One may ask whether the condition $\alpha < 2$ in Proposition \ref{de19.3} is sharp. We believe that it is not. We conjecture that for every $\alpha < \infty$ there exists $c_1 < \infty$ such that for all $y\in \R$ and $t\geq 0$, $\E(|H^y_t - H^y_0|^\alpha) < c_1$. Similarly, we believe that the uniform bound in Theorem \ref{n30.2} can be extended to every moment of $F^\bzero_t$.

(ii) We present an informal but easy to formalize argument showing that on a circular graph, a particle in an ordered system of particles cannot move more than two full circles around the graph. Consider the circular graph $\calC_n$ and identify its vertices with points $e^{i2\pi k/n}$ on the unit circle $S$. Let $\wt H^\theta_0$ denote the position of the particle with label $\theta$ at time 0, where $e^{i\theta}\in S$. Processes $\wt H^\theta_t$ are defined in a way analogous to that for $H^y_t$; we leave the details of the construction to the reader. Let $a = \int_0^{2\pi} \wt H^\theta_0 d\theta$. Note that when a meteor hits at time $t$, half of the mass is moved $2\pi/n$ radians in the clockwise direction and half of the mass is moved $2\pi/n$ radians in the opposite direction. Hence, for every meteor hit time $t\geq 0$, $\int_0^{2\pi} \wt H^\theta_t d\theta = \int_0^{2\pi} \wt H^\theta_{t-} d\theta$. It follows that $\int_0^{2\pi} \wt H^\theta_t d\theta = a$ for all $t\geq 0$. Suppose that a particle moved more than two full circles around the graph, say, in the clockwise direction, between times 0 and $t$. Then, because all particles are ordered, all other particles must have moved at least one full circle in the clockwise direction. Thus $\int_0^{2\pi} \wt H^\theta_t d\theta \geq a + (2\pi)^2 > a$. This is a contradiction. It shows that a particle in an ordered system of particles can never move more than two full circles around the graph. A slight improvement of the argument shows that a particle can never move by the angle $2\pi(1+1/n)$.

(iii)
The representation of the meteor model using non-crossing functions $H^y$ is similar in spirit to some other models known in the literature. One of them is the motion of the tracer particle in the
exclusion process, see, e.g., \cite{Arr}. Another one is the trajectory of a particle in one of several models for reflecting paths proposed by Harris \cite{Harris} and Spitzer \cite{Spitzer}, and later generalized and carefully analyzed in \cite{DGL}. In all the cited models, the variance of the reflecting particle location grows with time (as a power of time). It is rather surprising that the variance of $H^y_t$ is not growing with time. This may be related to the fact that in the model of Howitt and Warren \cite{HW}, the mass redistribution function has to be rescaled as in 
\cite[(1.4)]{HW} for the limit in their theorems to be
non-degenerate.

(iv) An intriguing problem of ``number variance saturation'' was studied
in \cite{HJ}. At this point it is not clear whether the resemblance between that phenomenon and our Proposition \ref{de19.3}
is more than superficial.
\end{remark}

\section{Support of the stationary measure}\label{sec:supp}

Consider a connected simple graph $G$ with a finite vertex set $V$ and let $k=|V|$. Note that  $\calM_t \in [0,\infty)^V$. 
Recall from Section \ref{sec:pre} that there exists
a unique stationary measure $Q$ for the meteor process with $\sum _{x\in V} M^x_0 = k$. Let $U_Q$ be the closure of the support of $Q$ in $[0,\infty)^V$. We define $U$ to be the (closed) subset of 
$[0,\infty)^V$ which consists of all $\{a_x, x\in V\}$ such that $\sum_{x\in V} a_x = k$ and $a_x =0$ for at least one $x\in V$.

\begin{theorem}
We have $U_Q = U$.
\end{theorem}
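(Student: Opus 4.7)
The proof splits into the two inclusions $U_Q \subseteq U$ and $U \subseteq U_Q$.

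For $U_Q \subseteq U$: Mass is conserved by the dynamics, so $\sum_v M^v_0 = k$ a.s.\ under $Q$. Working with the two-sided stationary version of the process from Remark~\ref{j13.2}, the times $\{T^v : v \in V\}$ are a.s.\ distinct (being jump times of independent Poisson processes), so the vertex $v^* \in V$ with $T^{v^*} = \max_{v \in V} T^v$ is well-defined a.s. Since $T^{v^*} \geq T^x$ for every $x \in U(v^*)$, Proposition~\ref{a29.8}(v) forces $M^{v^*}_0 = 0$, giving $\calM_0 \in U$ a.s.\ and hence $U_Q \subseteq U$.

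For $U \subseteq U_Q$: Let $A_v$ denote the deterministic meteor-hit operator, i.e., $(A_v \calM)^v = 0$, $(A_v \calM)^y = M^y + M^v/d_v$ for $y \lra v$, and $(A_v \calM)^y = M^y$ otherwise. I would argue in two parts. First, $U_Q$ is closed under each $A_v$: if $\bfa \in U_Q$ and $W$ is an open neighborhood of $A_v \bfa$, use continuity of $A_v$ to pick a neighborhood $W'$ of $\bfa$ with $A_v W' \subseteq W$. The event $E_\varepsilon$ that $N^v$ has exactly one jump in $[0, \varepsilon]$ and no other $N^w$ has a jump in $[0, \varepsilon]$ has positive probability and is independent of $\calM_0$. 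On $\{\calM_0 \in W'\} \cap E_\varepsilon$ one has $\calM_\varepsilon = A_v \calM_0 \in W$, so using $Q(W') > 0$ and stationarity, $Q(W) \geq \P_Q(\calM_\varepsilon \in W) > 0$, giving $A_v \bfa \in U_Q$. Second, the controllability claim: for every $\bfa \in U$ and every $\delta > 0$ there exist some $\bc \in U_Q$ and vertices $w_1, \dots, w_N \in V$ with $\|A_{w_N} \cdots A_{w_1} \bc - \bfa\| < \delta$. Granting both, $A_{w_N} \cdots A_{w_1} \bc \in U_Q$ by repeated invariance, its distance from $\bfa$ is less than $\delta$, and since $U_Q$ is closed, $\bfa \in U_Q$.

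To establish controllability, I would use a spanning-tree construction. Pick any $\bc \in U_Q$ (non-empty since the support of a probability measure is non-empty). Let $v_0$ be a vertex with $a_{v_0} = 0$, and fix a spanning tree $B$ of $G$ rooted at $v_0$ with a BFS enumeration $v_0, v_1, \dots, v_{k-1}$. The plan is to apply a long initial mixing sequence that drives $\bc$ into a small neighborhood of an explicit reference point in $U$, followed by a short finite sequence of hits ordered consistently with the BFS depth that reshapes the configuration into one $\delta$-close to $\bfa$. Tuning the multiplicities and orderings of hits near the zero-set $\{v : a_v = 0\}$ should provide enough real parameters to match the $|V|-1$ affine degrees of freedom of $\bfa$. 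The main obstacle is density: orbits of the finitely generated semigroup $\langle A_v \rangle$ are countable, so reachable configurations form only a countable subset of $U$, and density must be argued rather than exact reachability. Proving density requires a rank/dimension count showing that the parameterization from hit sequences to final configurations has full-dimensional local image on each face of $U$ (indexed by the subset $\{v : a_v = 0\}$); this dimensional analysis, performed face by face, is the technical heart of the argument.
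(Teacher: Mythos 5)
Your first inclusion and your ``$U_Q$ is closed under each hit operator $A_v$'' step are both correct (the latter is a clean positive-probability argument and matches the spirit of the paper's Step 5). The problem is that everything then rests on your controllability claim, and that claim is exactly the heart of the theorem; you have not proved it, and the route you sketch for it would not work as stated. The set of points reachable from a fixed $\bc$ by finite hit sequences is countable, and the ``parameters'' you propose to tune (multiplicities and orderings of hits) are discrete, so there is no continuous parameterization to which a rank or dimension count can be applied; a ``full-dimensional local image'' statement does not even make sense for this map, and density of the countable reachable set has to come from a completely different mechanism. Moreover, since at this stage the only information you have about $U_Q$ is $U_Q\subseteq U$, your $\bc\in U_Q$ is an arbitrary unknown point of $U$, so you in fact need approximate reachability of every target from \emph{every} starting configuration --- which is precisely the uniform statement the paper labors to prove.

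The paper's mechanism is worth contrasting with your sketch. It constructs the hit sequence \emph{backwards} from the target $\ba\in U^*$ using an inverse operation $\calR$ (undoing a meteor hit), with small perturbations at each step whose cumulative effect is controlled by the $L^1$-contractivity of the hit map $\calT$; this yields a finite sequence $x_{n},\dots,x_1$ whose forward application to a suitable point lands within $\eps_1$ of $\ba$. The crucial additional ingredient, which replaces both your ``long mixing sequence toward a reference point'' and your ``parameter tuning,'' is a coupling argument: interpreting the deterministic hit dynamics through time-delayed random walks (mass as probability), and using that every vertex occurs infinitely often in the constructed sequence, one shows that the \emph{same} hit sequence drives an arbitrary configuration of total mass $k$ to essentially the same endpoint, i.e.\ the dynamics along this sequence forgets its initial condition. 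Note that $\calT(\cdot,v)$ is only $1$-Lipschitz in $L^1$, not a strict contraction, so this forgetting cannot be obtained by a naive contraction estimate; the probabilistic coupling (or an equivalent argument) is genuinely needed. Once one has this uniform statement, the conclusion follows from the a.s.\ occurrence of the prescribed finite hit pattern in the Poisson clocks, without ever having to locate a specific point of $U_Q$. Your proposal is missing this entire core, as you yourself acknowledge, so it is a skeleton rather than a proof.
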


\begin{proof}
The inclusion $U_Q \subset U$ is obvious. We will prove the opposite inclusion.

\medskip

{\it Step 1}. 
Recall that $d_x$ denotes the degree of vertex $x \in V$.
For $\ba = \{a_x, x\in V\} \in [0,\infty)^V$ and a vertex $y\in V$, we define $\calT(\ba, y) = \{b_x, x\in V\}$ by setting
$b_y = 0$, 
and $b_x = a_x + b_y/d_y$ for all $x \lra y$. We let $b_x = a_x$ if $x \not\lra y$ and $x\ne y$.
Note that the operation $\calT$ encodes the jump of the meteor process when a meteor hits vertex $y$.

We will now define an ``inverse'' operation to $\calT$. 
For a vertex $y\in V$ and  $\ba = \{a_x, x\in V\} \in [0,\infty)^V$, let $a_y^{\min} =\min_{x\lra y} a_x $. 
We let
$\calR(\ba, y) = \{b_x, x\in V\} $, where
$b_y = a_y + d_y a_y^{\min}$, 
and $b_x = a_x - a_y^{\min}$ for all $x \lra y$. We let $b_x = a_x$ if $x \not\lra y$ and $x\ne y$.
We will typically apply $\calR$ to $\ba $ and $y$ such that $\ba \in U$, $a_y =0$ and $a_y^{\min}  >0$. It is easy to see that if $\ba$ and $y$ satisfy these conditions then   
\begin{align}\label{ag13.1}
\calT(\calR(\ba,y),y) =\ba, \qquad \calR(\ba,y) \in U.
\end{align}

For $\ba, \bb \in [0,\infty)^V$, $\ba =\{a_x, x\in V\}$, $\bb =\{b_x, x\in V\}$, let $|\ba - \bb| = \sum_{x\in V} |a_x-b_x|$. In other words, $|\ba - \bb|$ is the $L^1$ norm of $\ba - \bb$. This implies that we have the usual triangle inequality $|\ba - \bc|\leq |\ba - \bb|+|\bb - \bc|$ for $\ba, \bb,\bc \in [0,\infty)^V$.

Let $U^*$ be the set of all
$\ba=\{a_x, x\in V\}\in U$ such that $a_x + a_y >0$ for all $x\lra y$. 
Fix any $\ba\in U^*$.
Let $a^1_{\min} $  be the minimum of non-zero $a_x$'s, and fix any $\eps_1\in(0, a^1_{\min}/2)$. We will define inductively infinite sequences of real numbers $\eps_1, \eps_2, \dots$, vertices $x_1, x_2, \dots$ and elements $\ba^1, \ba^2, \dots$ of $U$. 

We let $\ba^1 = \{a^1_x, x\in V\} =\{a_x, x\in V\}=\ba$. Let $z_1, \dots, z_m$ be all vertices such that $a^1_{z_r} = 0$ for $r=1,\dots, m$. We find $\delta >0$ so small that $\delta < \eps_1 / 2$ and we find $y$ such that $\delta < a^1_y$. We define $\wt \ba^1 = \{\wt a^1_x, x\in V\} \in U $ by setting $\wt a^1_{z_r} = \delta/2^r$ for $r=2,\dots, m$, $\wt a^1 _y = a^1_y - \sum_{r=2}^m \delta/2^r$, and $\wt a^1_x = a^1_x$ for all $x \ne y,z_2, \dots , z_m$.
Note that $\wt a^1_x=0$ if and only if $x= z_1$.
We let $\ba^2 = \calR(\wt \ba^1, z_1)$ and $x_1 = z_1$.
We let $a^2_{\min} $  be the minimum of non-zero $a^2_x$'s, and choose $\eps_2\in(0, (\eps_1 \land a^2_{\min})/2)$.

For the induction step, we assume that $\eps_1, \dots, \eps_j$, $x_1, \dots, x_{j-1}$ and $\ba^1, \dots, \ba^j$ have been defined for some integer $j\geq 2$.
Write $\ba^j = \{a^j_x, x\in V\}$ and suppose that $z^j_1, \dots, z^j_{m_j}$ are all vertices with $a^j_{z^j_r} = 0$ for $r=1,\dots, m_j$. 
For $x\in \{z^j_1, \dots, z^j_{m_j}\}$,
let $\alpha(x)$ be the smallest $\ell \leq j$ such that $x$ belongs to every sequence $z^r_1, \dots, z^r_{m_r}$ for $r=\ell, \dots, j$.
We can and will assume that the sequence $z^j_1, \dots, z^j_{m_j}$ is ordered in such a way that $\alpha(z^j_1) \leq \alpha(z^j_r)$ for all $r=2, \dots, m_j$.
We find $\delta_j >0$ so small that $\delta_j < \eps_j / 2^j$ and we find $v$ such that $\delta_j < a^j_v$. We define $\wt \ba^j =\{\wt a^j_x, x\in V\}  \in U $ by setting $\wt a^j_{z^j_r} = \delta_j/2^r$ for $r=2,\dots, m_j$, $\wt a^j _v = a^j_v - \sum_{r=2}^m \delta_j/2^r$, and $\wt a^j_x = a^j_x$ for all $x \ne v, z^j_2,  \dots , z^j_{m_j}$.
Note that
\begin{align}\label{ag15.2}
|\wt \ba^j - \ba^j| = 2\sum_{r=2}^m \delta_j/2^r \leq \delta_j < \eps_j/2^j.
\end{align}
We have $\wt a^j_x=0$ if and only if $x= z^j_1$.
We let $\ba^{j+1} = \calR(\wt \ba^j, z^j_1)$ and $x_j = z^j_1$. We let $a^{j+1}_{\min} $  be the minimum of non-zero $a^{j+1}_x$'s, and choose $\eps_{j+1}\in(0, (\eps_j \land a^{j+1}_{\min})/2)$.
This completes
the inductive definition of sequences $\eps_1, \eps_2, \dots$, $x_1, x_2, \dots$ and $\ba^1, \ba^2, \dots$ 

Note that $a^{j+1}_{z^j_1} >0$ and recall how we have 
used the function $\alpha(\,\cdot\,)$ to
choose an element of $z^j_1, \dots, z^j_{m_j}$ to be in the first position, i.e., $z^j_1$. It follows easily that for every vertex $x\in V$, there exist infinitely many $j$ such that $a^j_x >0$.

In view of \eqref{ag13.1}, we have
\begin{align}\label{ag15.3}
\calT(\ba^n, x_{n-1}) = \wt \ba^{n-1}, \qquad n\geq 2.
\end{align}

\medskip

{\it Step 2}. 
Consider an integer $n_0\geq 1$. Let $\bb^{n_0} = \ba^{n_0}$ and define $\bb^n$ for $n_0-1, n_0-2, \dots , 1$ by $\bb^n = \calT(\bb^{n+1}, x_{n}) $. 
We will show that
\begin{align}\label{ag15.1}
|\bb^n - \ba^n| \leq \sum_{m=n}^{n_0-1} \eps_n/2^m\leq \eps_n,
\qquad n=1, \dots, n_0.
\end{align}
By the definition of $\bb^{n_0}$, the estimate holds for $n=n_0$. We will prove the formula for other $n$ by induction. Suppose that the formula holds for some $n\in[2,n_0]$. We will show that it holds for $n-1$.

It follows from the proof of Theorem 3.2 in \cite{BBPS} (see the first displayed formula in that proof) that 
for any $\bc^1, \bc^2 \in [0,\infty)^k$ and $x\in V$,
\begin{align}\label{ag15.4}
|\calT(\bc^1, x) - \calT(\bc^2, x)|
\leq |\bc^1 - \bc^2|.
\end{align}

We have by the definition of $\bb^n$, \eqref{ag15.3}, \eqref{ag15.4}, induction step assumption \eqref{ag15.1} and \eqref{ag15.2},
\begin{align*}%\label{ag15.5}
|\bb^{n-1} - \ba^{n-1}| &= 
|\calT(\bb^{n}, x_{n-1}) - \ba^{n-1}|\\
&\leq |\calT(\bb^{n}, x_{n-1}) - \wt\ba^{n-1}|  + |\wt\ba^{n-1} - \ba^{n-1}| \\
&= |\calT(\bb^{n}, x_{n-1}) - \calT(\ba^{n}, x_{n-1})|  + |\wt\ba^{n-1} - \ba^{n-1}| \\
& \leq |\bb^{n} - \ba^{n}|  + |\wt\ba^{n-1} - \ba^{n-1}|\\
&\leq \sum_{m=n}^{n_0-1} \eps_n/2^m + \eps_{n-1}/2^{n-1}\\
&\leq \sum_{m=n}^{n_0-1} \eps_{n-1}/2^m + \eps_{n-1}/2^{n-1} \\
&= \sum_{m=n-1}^{n_0-1} \eps_{n-1}/2^m.
\end{align*}
This completes the induction step and thus completes the proof of 
\eqref{ag15.1}.

\medskip

{\it Step 3}. 
We will next prove that for every $y\in V$, the sequence $x_1, x_2, \dots$ contains
infinitely many $y$'s. Suppose otherwise. Let $V_1$ be the set of all $y\in V$ such that the sequence $x_1, x_2, \dots$ contains
infinitely many $y$'s and let $k_1 = |V_1|$. By assumption, $k_1 < k$. Since $V$ is finite, $k_1 >0$. 

Recall that $\wt a^j_x=0$ if and only if $x= z^j_1$,
$\ba^{j+1} = \calR(\wt \ba^j, z^j_1)$ and $x_j = z^j_1$. It follows that $a^{j+1}_x=0$ only if (but not necessarily if) $x\lra x_j$. In particular, if $a^{j+1}_x=0$ then $x\ne x_j$.
This implies that $k_1 \geq 2$. 
Another consequence of the fact that $a^{j+1}_x=0$ only if $x\lra x_j$ is that
$V_1$ is a connected subset of $V$. 

By assumption, $V_1^c := V\setminus V_1 \ne \emptyset$. 
Let $n_1$ be so large that $x_j \in V_1$ for all $j\geq n_1$.
We have noted earlier in the proof that
for every vertex $x\in V$, there exist infinitely many $j$ such that $a^j_x >0$.
Let $n_2 \geq n_1$ be such that for some $y\in V_1$, we have $a^{n_2}_y >0$.
By the definition of $\eps_{n_2}$, there exists $y\in V_1$ such that $a^{n_2}_y > 2 \eps_{n_2}$.
It follows from \eqref{ag15.1} applied with $n=n_2$ that for any $n_0>n_2$ and $\bb^{n_0} = \ba^{n_0}$, there exists $y\in V_1$ with $b^{n_2}_y > \eps_{n_2}$. 

Let $\{\wt X _n, n\geq 1\}$ be a discrete symmetric random walk on $V$ and let $\{X _n, n\geq 1\}$ be the process $X $ killed upon exiting $V_1$. Let $p_n(x,y)$ be the $n$-step transition probabilities for $ X $. Since $V_1^c \ne \emptyset$ and $V$ is connected, it follows that no matter what $X _0$ is, the process $ X $ will be killed at a finite (random) time, a.s., and, therefore, for any fixed $x,y\in V_1$, $\lim_{n\to \infty} p_n(x,y) = 0$. Since $V_1$ is a finite set, 
$\lim_{n\to \infty} \sup_{x,y \in V_1} p_n(x,y) = 0$. We choose $n_3$ so large that 
\begin{align}\label{ag22.1}
\sup_{n\geq n_3} \sup_{x \in V_1}  \sum_{y \in V_1} p_n(x,y)  \leq \eps_{n_2}/(2k).
\end{align}

We will say that $y_1, y_2, \dots, y_n$ is a nearest neighbor path in $V_1$ if $y_j \in V_1$ for all $j$ and $y_j \lra y_{j+1}$ for all $1\leq j \leq n-1$.
It is easy to see that we can choose $n_0$ so large that the following holds

(A1) For each nearest neighbor path $y_1, y_2, \dots, y_{n_3}$ of length $n_3$ in $V_1$, there exist $j_1, j_2, \dots , j_{n_3}$ such that $n_0>j_1 > j_2 > \dots > j_{n_3}>n_2$ and $x_{j_m} = y_m$ for $1 \leq m \leq n_3$.

Let 
\begin{align*}
\gamma &= \sum_{x\in V_1} b^{n_0}_x, \\
\bp_x &= b^{n_0}_x/\gamma, 
\qquad x\in V_1,
\end{align*}
and note that $\{\bp_x, x\in V_1\}$ is a probability distribution on $V_1$.

Suppose that the initial distribution of $X $ is given by $\P(X _1 =x) =\bp_x$ for $x\in V_1$. We will define a process $\{Y_n, 1\leq n\leq n_0-n_2+1\}$ which, heuristically speaking, represents the process $X $ slowed down so that it is
moving to the next step along its trajectory only when the current $x_j$ agrees with its location. The rigorous definition is the following. We set $Y_1 = X _1$ and $k_1 = 1$. Suppose that $Y_n$ and $k_n$ have been defined for some $n < n_0-n_2+1$. If $Y_n = x_{n_0-n} $ then we let $k_{n+1} = k_n +1$ and $Y_{n+1} = X _{k_{n+1}}$. Otherwise we let $Y_{n+1} = Y_n$ and $k_{n+1} = k_n$.

Let $\zeta$ be the time $n\leq n_0-n_2+1$ when $Y_n$ is killed (upon exiting $V_1$); we let $\zeta = n_0-n_2+2$ if there is no such time. It follows from (A1) that $Y_n$ makes at least 
$n_3$ steps on the interval $[1,n_0-n_2+1]$ or it is killed at $\zeta \leq n_0-n_2+1$. Hence, in view of \eqref{ag22.1}, $\P(\zeta > n_0-n_2+1) \leq \eps_{n_2}/(2k)$. 
It is elementary to check that $\P(Y_n = x) = b^{n_0 -n +1}_x/\gamma$ for $x\in V_1$ and $1\leq n \leq n_0-n_2+1$. In particular, 
$\P(Y_{n_0-n_2+1} = x) = b^{n_2}_x/\gamma $ for $x\in V_1$.
We obtain, using \eqref{ag22.1}, for all $x\in V_1$,
\begin{align*}
b^{n_2}_x &= \gamma \P(Y_{n_0-n_2+1} = x)
\leq k \P(Y_{n_0-n_2+1} = x) \leq 
k \P(\zeta > n_0-n_2+1) \leq k \eps_{n_2}/(2k)\\
& = \eps_{n_2}/2.
\end{align*}
This contradicts the fact that
there exists $y\in V_1$ with $b^{n_2}_y > \eps_{n_2}$.
This completes the proof 
that for every $y\in V$, the sequence $x_1, x_2, \dots$ contains
infinitely many $y$'s.

\medskip

{\it Step 4}. 
Recall the following: (i) We fixed an arbitrary $\ba\in U^*$; (ii) $a^1_{\min} $ is the minimum of non-zero $a_x$'s; (iii) $\eps_1\in(0, a^1_{\min}/2)$ is a fixed, arbitrarily small number; (iv) the sequence $x_1, x_2, \dots$ was constructed from $\ba$.
For a given $\bc \in U$ and $n$, we let $\bc^{n} = \bc$ and we define $\bc^j$ for $j=n-1, n-2, \dots , 1$ by $\bc^j = \calT(\bc^{j+1}, x_{j}) $.
We will show that there exists 
$n_4$ so large that for all $n\geq n_4$ and all $\bc \in U$,
\begin{align}\label{ag28.1}
|\bc^1 - \ba| \leq  2\eps_1.
\end{align}

Let $\{X^1_i, i\geq 1\}$ and $\{ X^2_i, i\geq 1\}$ be independent discrete time symmetric random walks on $G$.
Their initial distributions will be specified below.

Consider a large $n$ whose value will be specified later.
We define random walks $\{Y^1_i, 1\leq i\leq n\}$ and $\{ Y^2_i, 1 \leq  i\leq n\}$ with ``time delay'' as follows. For $m=1,2$,
we let $Y^m_1 = X^m_1$ and $\beta^m_1 = 1$. Consider any $2\leq j\leq n$ and suppose that $Y^m_{j-1}$ and $\beta^m_{j-1}$ have been defined. Let
\begin{align*}
\beta^m_j &= 
\begin{cases}\beta^m_{j-1}+1 & \text{if  } Y^m_{j-1} = x_{n-j+1},\\
\beta^m_{j-1} & \text{otherwise},
\end{cases} \\
Y^m_j &= X^m_{\beta^m_j}.
\end{align*}
In other words, $Y^m$ visits the same vertices as $X^m$ does, in the same order, but it changes the location between times $j-1$ and $j$ if and only if $Y^m_{j-1} = x_{n-j+1}$.

Let $d_G = \max\{d_x: x\in V\}$ be the degree of the graph $G$. Let $\dist(x,y)$ be the graph distance between $x,y\in V$. For $0 \leq j \leq n-1$,
\begin{align}\label{s2.1}
\P\left(\dist(Y^1_{j+1},Y^2_{j+1}) = \dist(Y^1_{j},Y^2_{j})-1
\mid Y^1_j \ne Y^2_j, (Y^1_{j+1},Y^2_{j+1}) \ne (Y^1_j, Y^2_j) \right)
\geq 1/d_G.
\end{align}
Let $\tau =\min\{j: 1\leq j \leq n, Y^1_j =  Y^2_j \}$ with the convention that $\min\emptyset = \infty$. 
We obtain from \eqref{s2.1}, for $\ell \leq n-1$, and any $x,y\in V$,
\begin{align*}
\P\left(\tau \leq \ell
\mid \beta^1_{\ell-1} + \beta^2_{\ell-1} \geq k+1, Y^1_1 =x, Y^2_1=y \right)
\geq 1/d_G^k.
\end{align*}
Therefore,
\begin{align*}
\P\left(\tau > \ell
\mid \beta^1_{\ell-1} + \beta^2_{\ell-1} \geq k+1, Y^1_1 =x, Y^2_1=y \right)
\leq 1- 1/d_G^k.
\end{align*}
This and the Markov property imply that
\begin{align}\label{s2.2}
\P\left(\tau = \infty
\mid \beta^1_{n-1} + \beta^2_{n-1} \geq (k+1) m, Y^1_1 =x, Y^2_1=y \right)
\leq (1- 1/d_G^k)^m.
\end{align}
We now fix $m_0$ such that $(1-1/d_G^{k})^{m_0} < \eps_1/(2k)$.

Recall that for every $y\in V$, the sequence $x_1, x_2, \dots$ contains
infinitely many $y$'s.
This implies that there exists $n_4$ so large that   
for any sequence $y_1, y_2, \dots, y_{km_0}$ of elements of $V$ of length $km_0$, there exists a subsequence $x_{j_1}, x_{j_2}, \dots, x_{j_{km_0}}$ of 
$x_1, x_2, \dots, x_{n_4}$ such that $x_{j_m} = y_m$ for all $1\leq m \leq km_0$.
Recall the integer $n$ used in the definition of $Y^m$'s and assume that $n\geq n_4$.
It follows from the definition of $n_4$ that 
$\beta^1_{n-1} + \beta^2_{n-1} \geq (k+1) m_0$ with probability 1. 
Hence, with this choice of $n$, \eqref{s2.2} implies that for any $x,y \in V$,
\begin{align}\label{s2.3}
\P\left(\tau = \infty
\mid  Y^1_1 =x, Y^2_1=y \right)
\leq (1- 1/d_G^k)^{m_0} < \eps_1/(2k).
\end{align}

Recall that the sequence $\bc^j$ for $j=n,n-1,  \dots , 1$ was defined relative to $n$.
We let $\bb^n = \ba^n$ and we define $\bb^j$ for $j=n-1, n-2, \dots , 1$ by $\bb^j = \calT(\bb^{j+1}, x_{j}) $.
Let $\bb^j = \{b^j_x, x\in V\}$ and $\bc^j = \{c^j_x, x\in V\}$ for $1\leq j \leq n$, and $p^j_x = b^j_x/k$ and $q^j_x = c^n_x/k$ for $x\in V$. 
Note that 
$\bp^j:=\{p^j_x, x\in V\}$ and $\bq^j:=\{q^j_x, x\in V\}$ are probability distributions on $V$, for all $j$. 

Let $Y^3_j = Y^2_j$ for $j \leq \tau$ and $Y^3_j = Y^1_j$ for $j > \tau$.
Assume that the (initial) distribution of $Y^1_1$ is $\bp^n$ and the distribution of $Y^3_1= Y^2_1$ is $\bq^n$. 
It is easy to see that the distribution of $Y^1_j$ is $\bp^{n-j+1}$ and that of $ Y^3_j$ is the same as that of $Y^2_j$, and it is equal to $\bq^{n-j+1}$, for all $1\leq j\leq n$.
It follows from \eqref{s2.3} that 
\begin{align*}%\label{ag28.3}
|\bp^1 - \bq^1 | = \sum_{x\in V} |p^1_x - q^1_x|
\leq 2\P(Y^1_n \ne Y^3_n)
\leq \eps_1/k.
\end{align*}
Hence, $|\bb^1 - \bc^1| \leq \eps_1$.
This and \eqref{ag15.1} applied with $n=1$ show that 
$|\ba^1 - \bc^1| \leq 2\eps_1$. Thus, \eqref{ag28.1} is proved.

\medskip

{\it Step 5}. 
Consider any  time $t\geq 0$ and suppose that the first $n_4$ meteor hits after time $t$ occur at the sites $x_{n_4}, x_{n_4-1}, \dots, x_1$, in this order, and the last hit occurs at time $s >t$. It follows from \eqref{ag28.1} that $|\calM_s - \ba| \leq 2\eps_1$. A standard argument shows that, with probability 1, there exists $t\geq 0$ such that the first $n_4$ meteor hits after time $t$ occur at the sites $x_{n_4}, x_{n_4-1}, \dots, x_1$. Hence, $\calM$ will come within distance $2\eps_1$ of $\ba$ at some time, a.s., for any initial distribution of $\calM_0$.
Since $\ba$ is an arbitrary element of $U^*$, $\eps_1$ is an arbitrarily small positive number and $U_Q$ is closed (by definition), we have $U^* \subset U_Q$. It is easy to see that $U^*$ is dense in $U$. We conclude that $U\subset U_Q$, thus finishing the proof.
\end{proof}

\section{Acknowledgments}

I would like to thank Daniel Lanoue, Yves Le Jan and Anna Talarczyk-Noble for very useful advice.
I am grateful to the referees for very careful reading
of the original manuscript and many suggestions for improvement.

\bibliographystyle{plain}
\bibliography{meteor}

\end{document}